\theoremstyle{plain} 
\newtheorem{thm}{Theorem}[section]
\newtheorem{prop}[thm]{Proposition}
\newtheorem{cor}[thm]{Corollary}
\newtheorem{lem}[thm]{Lemma}
\theoremstyle{definition}
\newtheorem{dfn}[thm]{Definition}
\newtheorem{conj}[thm]{Conjecture}
\newtheorem{rmk}[thm]{Remark}
\newtheorem{ex}[thm]{Example}
\numberwithin{equation}{section}
\newcommand{\fm}{\mathfrak{m}}
\newcommand{\fp}{\mathfrak{p}}
\newcommand{\fa}{\mathfrak{a}}
\DeclareMathOperator{\ann}{ann} 
\DeclareMathOperator{\V}{V} \DeclareMathOperator{\hh}{H}
\DeclareMathOperator{\e}{e} 
\DeclareMathOperator{\cm}{CM}
\def\x{\operatorname{\mathcal{X}}}
\def\A{\operatorname{\mathcal{A}}}
\def\y{\operatorname{\mathcal{Y}}}
\def\gd{\operatorname{\mathsf{G-dim}}}
\def\g{\operatorname{\mathcal{G}}}
\def\pd{\operatorname{\mathsf{pd}}}
\def\Pd{\operatorname{\mathsf{PD}}}
\def\gr{\operatorname{\mathsf{grade}}}
\def\Gr{\operatorname{\mathsf{gr}}}
\def\Tr{\mathsf{Tr}}
\def\D{\mathsf{D}}
\DeclareMathOperator{\coker}{Coker}
\DeclareMathOperator{\res}{\operatorname{\mathsf{res}}}
\DeclareMathOperator{\add}{\operatorname{\mathsf{add}}}
\DeclareMathOperator{\md}{\operatorname{\mathsf{mod}}}
\DeclareMathOperator{\CM}{\operatorname{\mathsf{CM}}}
\DeclareMathOperator{\NF}{\operatorname{\mathsf{NF}}}
\def\soc{\mathsf{soc}}
\def\rad{\mathsf{radius}}
\def\depth{\operatorname{\mathsf{depth}}}
\def\Ext{\operatorname{\mathsf{Ext}}}
 \DeclareMathOperator{\Spec}{Spec}
\def\Tor{\operatorname{\mathsf{Tor}}}
\DeclareMathOperator{\cod}{codim}
\def\XX{\mathrm{X}}
\def\urltilda{\kern -.15em\lower .7ex\hbox{\~{}}\kern .04em}
\def\urldot{\kern -.10em.\kern -.10em}\def\urlhttp{http\kern -.10em\lower -.1ex
\hbox{:}\kern -.12em\lower 0ex\hbox{/}\kern -.18em\lower
0ex\hbox{/}}
\numberwithin{equation}{thm}
\begin{document}
\baselineskip=15pt

\title[Resolving subcategories closed under certain operations]{Resolving subcategories closed under certain operations\\
and a conjecture of Dao and Takahashi}

\bibliographystyle{amsplain}
\author[A. Sadeghi]{Arash Sadeghi}
\address{Arash Sadeghi\\
	School of Mathematics, Institute for Research in Fundamental Sciences (IPM), P.O. Box: 19395-5746, Tehran, Iran}
\email{sadeghiarash61@gmail.com}

\author[R. Takahashi]{Ryo Takahashi}
\address{Ryo Takahashi\\
Graduate School of Mathematics, Nagoya University, Furocho, Chikusaku, Nagoya 464-8602, Japan}
\email{takahashi@math.nagoya-u.ac.jp}
\urladdr{http://www.math.nagoya-u.ac.jp/~takahashi/}
\thanks{2010 {\em Mathematics Subject Classification.} 13C60, 13D02}
\thanks{{\em Key words and phrases.} Cohen--Macaulay module, cosyzygy, radius, resolving subcategory, totally reflexive module}
\thanks{Sadeghi's research was supported by a grant from IPM. Takahashi was partly supported by JSPS Grants-in-Aid for Scientific Research 16H03923 and 16K05098.}

\begin{abstract}
Let $R$ be a commutative Noetherian local ring with residue field $k$. Let $\x$ be a resolving subcategory of finitely generated $R$-modules. This paper mainly studies when $\x$ contains $k$ or consists of totally reflexive modules. It is proved that $\x$ does so if $\x$ is closed under cosyzygies. A conjecture of Dao and Takahashi is also shown to hold in several cases.
\end{abstract}
\maketitle

\section{Introduction}
Throughout the paper, $R$ is a commutative Noetherian ring and $\md R$ denotes the category
of all finitely generated $R$-modules.
Whenever $R$ is assumed to be local, its unique maximal ideal is denoted by $\fm$.
All subcategories are full.
For a Cohen--Macaulay local ring $R$, we denote by $\cm(R)$ the
subcategory of $\md R$ consisting of maximal Cohen--Macaulay modules.
The radius of a subcategory was defined by Dao and Takahashi \cite{DT}.
This notion is linked to both the
representation theory and the singularity of $R$. For example, in \cite[Proposition 2.5]{DT}, over a Gorenstein complete local ring $R$, it is shown that the category of maximal Cohen--Macaulay modules has radius zero if and only if $R$ has finite Cohen--Macaulay representation type, in other words, $R$ is a simple hypersurface singularity (when the residue field of $R$ is algebraically closed field of characteristic zero).
In \cite[Theorem II]{DT}, for a Cohen--Macaulay complete local ring $R$ with perfect coefficient field, it is shown that $\cm(R)$ has finite radius. Dao and Takahashi
propose the following conjecture, and prove that it holds if R is a complete intersection.
\begin{conj}[Dao--Takahashi]\label{dtconj}
	Let $R$ be a Cohen--Macaulay local ring. Let $\x$ be a resolving subcategory of $\md R$ with finite radius. Then $\x$ is contained in $\cm(R)$.
\end{conj}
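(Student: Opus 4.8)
The plan is to argue by contraposition: assuming $\x$ is a resolving subcategory of $\md R$ with $\x\not\subseteq\cm(R)$, I want to deduce that $\rad\x=\infty$. We may assume $\dim R\ge1$, since otherwise $\cm(R)=\md R$ and there is nothing to prove. The first move is a reduction. The non-Cohen--Macaulay locus $\ncm(\x)$ -- the set of primes $\fp$ such that $X_\fp$ is not maximal Cohen--Macaulay over $R_\fp$ for some $X\in\x$ -- is nonempty; pick $\fp$ minimal in it. Localizing at $\fp$ carries $\x$ to a subcategory of $\md R_\fp$ whose summand closure is again resolving, is still not contained in $\cm(R_\fp)$, and -- since the localization of a ball is a ball -- has radius over $R_\fp$ at most $\rad\x$; moreover, by minimality of $\fp$, every module in this new category is maximal Cohen--Macaulay on the punctured spectrum of $R_\fp$. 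Completing $R$ and the category (which likewise preserves resolving-ness, the radius, and non-containment in $\cm$), we may henceforth assume that $R$ is a complete Cohen--Macaulay local ring with a canonical module, $\x$ is a resolving subcategory of finite radius with $\x\not\subseteq\cm(R)$, and every $M\in\x$ is maximal Cohen--Macaulay on the punctured spectrum -- equivalently, $\hh^i_\fm(M)$ has finite length for every $i<\dim R$.

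Next I would set up a numerical obstruction. For $M\in\x$ put
\[
\lambda(M)=\max_{0\le i<\dim R}\bigl(\text{the Loewy length of }\hh^i_\fm(M)\bigr),
\]
a finite number in the situation above which measures the failure of $M$ to be maximal Cohen--Macaulay: $\lambda(M)=0$ exactly when $M$ is zero or maximal Cohen--Macaulay, and $\lambda(M)\ge1$ whenever $M$ is a nonzero non-MCM module. The point is that $\lambda$ obeys a ``ball calculus.'' It vanishes on free modules. For a free cover $0\to\Omega M\to F\to M\to 0$, the long exact sequence in local cohomology gives $\hh^0_\fm(\Omega M)=0$ and $\hh^i_\fm(\Omega M)\cong\hh^{i-1}_\fm(M)$ for $1\le i<\dim R$, so $\lambda$ is non-increasing under syzygies; it is obviously non-increasing under direct summands, it satisfies $\lambda(M'\oplus M'')=\max\{\lambda(M'),\lambda(M'')\}$, and the long exact sequence yields $\lambda(M)\le\lambda(M')+\lambda(M'')$ for every short exact sequence $0\to M'\to M\to M''\to0$. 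Feeding these estimates into the inductive definition of the balls (first the additive--summand--syzygy closure, then iterated extensions), one obtains for each $m$ a constant $c_m$ with $\lambda(M)\le c_m\,\lambda(G)$ for all $M$ in the $m$-th ball centred at $G$. Hence $\rad\x=\infty$ as soon as $\x$ contains modules of arbitrarily large $\lambda$, so the whole conjecture reduces to producing such an infinite family inside $\x$.

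That last step is the crux, and is where I expect the real difficulty. Starting from a nonzero non-MCM module $M\in\x$ (so $\lambda(M)\ge1$), one wants to build, using only $\oplus$, summands, extensions and syzygies, modules $N_1,N_2,\dots\in\x$ with $\lambda(N_j)\to\infty$; the natural device is iterated nonsplit self-extensions of $M$ -- and extensions of $M$ with its syzygies -- inside $\res(M)\subseteq\x$, arranged so that the connecting maps in local cohomology vanish and the relevant $\hh^i_\fm$ ``stack up,'' the guiding model being the modules $k[t]/(t^j)$ that occur over a hypersurface section. Doing this over an arbitrary Cohen--Macaulay $R$ -- i.e.\ controlling the socle growth of these extensions -- is precisely the obstacle, and is where extra hypotheses enter (e.g.\ $R$ a complete intersection, as in the original Dao--Takahashi argument). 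One case is clean and uses the dichotomy theorem proved earlier in the paper: if $\x$ is closed under cosyzygies, then either $k\in\x$ -- in which case $R/\fm^j\in\x$ for all $j$ (extend composition factors repeatedly) and $\lambda(R/\fm^j)=j\to\infty$, so $\rad\x=\infty$, contradicting finiteness of the radius -- or $\x$ consists of totally reflexive modules, which over a Cohen--Macaulay ring are maximal Cohen--Macaulay, so $\x\subseteq\cm(R)$. Thus the essential remaining problem is to pass from ``$\x$ contains a non-MCM module'' to ``$\x$ contains a $\lambda$-unbounded family'' without assuming closure under cosyzygies, and a proof in full generality amounts to carrying out this extension construction over an arbitrary Cohen--Macaulay local ring.
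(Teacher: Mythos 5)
Your proposal is not a proof, and you are explicit about that: the statement is a conjecture, the paper does not prove it in general (it establishes only the special cases listed in Theorem~\ref{t1}), and the essential gap you name --- producing, from a single non-MCM module in $\x$, a family in $\x$ whose local cohomology has unbounded Loewy length --- is precisely the open problem. That honest assessment is the correct one. The framework you set up does align closely with the paper's actual tools: your function $\lambda$ and the ``ball calculus'' you prove for it (vanishing on $R$, monotone under syzygies and summands, subadditive under extensions, hence bounded on each ball $[G]_m$) track exactly what Lemma~\ref{l1} and Lemma~\ref{l7} formalize via annihilators of local cohomology, and Proposition~\ref{t2} is the numerical consequence the paper extracts. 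Your observation that the cosyzygy-closed case resolves cleanly via the dichotomy ``$k\in\x$ or $\x\subseteq\g(R)$'' is exactly Theorem~\ref{t5} (i.e.\ Theorem~\ref{tt}), and the route $k\in\x\Rightarrow R/\fm^j\in\x\Rightarrow$ unbounded Loewy length is how the paper's Lemma~\ref{l7} bites.

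Two remarks on the parts you do assert. First, your global reduction (localize at a minimal prime of the non-CM locus, then complete, so that every module in the new category is MCM on the punctured spectrum) is plausible and the facts you invoke --- localizations of resolving subcategories, after taking $\add$, are resolving; balls localize to balls; the radius can only drop --- are in Dao--Takahashi; but the paper deliberately avoids this wholesale reduction, and instead localizes one $\Ext$-class at a time (Lemma~\ref{l6}) to place a single well-chosen module of $\md_0(R)$ in $\x$, which suffices for its arguments. Second, the paper's special cases all close the gap you leave open by a different pivot: under each extra hypothesis ($\x^*\subseteq\x$, quasi-decomposable $\fm$, presence of an Ulrich module, cosyzygy closure, etc.) they manufacture a \emph{syzygy of $k$} inside $\x$, and then Lemma~\ref{l2} (built on Proposition~\ref{t2} and \cite[Proposition 4.2]{DT1}) supplies the contradiction with finite radius. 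Your $\lambda$-unboundedness criterion is equivalent in force and subsumes the $k\in\x$ case, but without a device for producing $\Omega^n k\in\x$ --- or a $\lambda$-unbounded family directly --- from an arbitrary non-MCM module over an arbitrary Cohen--Macaulay local ring, the argument cannot close; that is exactly where the conjecture still stands.
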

In the first part of this paper, we study the resolving subcategory with finite radius. We obtain the following result which supports Conjecture \ref{dtconj} in several cases (see Theorem \ref{t3}, Proposition \ref{p3} and Corollary \ref{c1}).

\begin{thm}\label{t1}
Let $R$ be a Cohen--Macaulay local ring, and let $\x$ be a resolving subcategory of $\md R$ with finite radius. Then $\x$ is contained in $\cm(R)$ (hence Conjecture \ref{dtconj} holds true) if one of the following holds.
\begin{enumerate}[(i)]
\item{$\x$ is closed under $R$-duals (i.e. $\x^*\subseteq\x$).}
\item{$R$ has minimal multiplicity.}
\item{$R$ is Gorenstein with $\cod R\leq 4$ or $\e(R)\leq11$ and each module in $\x$ has bounded Betti numbers.}
\end{enumerate}
\end{thm}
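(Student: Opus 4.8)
The common strategy for all three cases is to reduce to showing that a resolving subcategory $\x$ of finite radius cannot contain a module of positive depth defect unless it already lives in $\cm(R)$; equivalently, if $\x\not\subseteq\cm(R)$ then $\x$ has infinite radius. Recall that $\x$ being resolving means it contains $R$, is closed under direct summands, extensions, and syzygies. So the first move is: pick $M\in\x$ with $\depth M<\dim R$, and study the syzygies $\syz^i M$; since $\x$ is resolving, all of these lie in $\x$, and by the depth lemma they eventually stabilize to a maximal Cohen--Macaulay module only if $M$ itself has finite projective dimension over the relevant localization. The key observation to extract is that $\x$ then contains, as a summand of a syzygy, a module whose structure is "as bad as possible" — in the extreme, one wants to produce the residue field $k$, or at least a module controlling $k$, inside $\x$; once $k\in\x$ (or more precisely, once $\x$ contains a module that builds $k$ in a bounded number of extension/syzygy steps), one argues via \cite[Theorem II]{DT}-type estimates that the radius must be infinite unless $R$ is regular, giving a contradiction. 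This is the engine; the three hypotheses are three different ways to run it.

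For case (i), the hypothesis $\x^*\subseteq\x$ is the cleanest. If $M\in\x$ has a minimal free presentation $F_1\to F_0\to M\to 0$, dualizing gives $0\to M^*\to F_0^*\to F_1^*\to \Tr M\to 0$, so $\Tr M$ (up to free summands) sits in $\x$ because $\x$ is closed under syzygies, duals, and cokernels of the relevant maps. The plan is to iterate: applying $(-)^*$ and $\syz$ alternately, one generates the modules $\Tr\syz^i M$ and their duals, and a depth/grade computation shows that if $M$ fails to be maximal Cohen--Macaulay at some prime $\fp$, then one of these modules has an associated prime forcing a copy of (a syzygy of) $R/\fp$, and iterating produces arbitrarily high syzygies of residue fields of quotient rings inside $\x$. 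One then invokes the fact that $\x$ has finite radius to bound the number of "cone" steps needed to build all these modules, contradicting the growth rate of Betti numbers over a non-regular local ring. The main technical point here is bookkeeping the grade: one needs $\gr(M^*,R)$ or $\gr(\Ext^i(M,R),R)$ to behave well, which is where the Evans--Griffith or Auslander--Bridger formalism (the Tr and G-dimension machinery, $\gd$, etc., all defined in the preamble) enters.

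For case (ii), minimal multiplicity $\e(R)=\edim R-\dim R+1$ means $\fm^2=(\bsx)\fm$ for a minimal reduction $\bsx$, equivalently $\gr_\fm R$ has a very restricted form; the standard consequence is that the first syzygy $\syz^1(R/\fm)=\syz k$ is, up to free summands, a direct sum of copies of $R/\fm$ after killing $\bsx$, i.e. $k$ appears as a summand of $\syz^{\dim R+1}k$ (or a shift thereof). Concretely, over $R$ with minimal multiplicity, $k$ is a direct summand of $\syz^2 k$ up to the regular part. The plan for (ii): take $M\in\x$ with $\depth M<\dim R$; pass to $R/(\bsx)$ where $\bsx$ is a maximal regular sequence that is also part of a minimal reduction of $\fm$; there $M$ has depth zero, so $k\in\Ass(\ov M)$, and the minimal multiplicity hypothesis lets one split off a copy of $k$ from a low syzygy of $\ov M$, hence (lifting back) a copy of $\syz^n k$ sits inside a syzygy of $M$, so $\syz^n k\in\x$ for some controlled $n$. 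Since $\x$ is resolving and closed under syzygies, all $\syz^i k\in\x$ for $i\ge n$, and then a resolving subcategory containing cosyzygies-free tails of $k$ has infinite radius over non-regular $R$ — contradiction with the finite radius hypothesis, unless $\depth M=\dim R$ after all. I expect the delicate step to be making the "split off a copy of $k$" claim precise: one must be careful that minimal multiplicity gives this for modules of depth zero, not just for $k$ itself, which likely uses that $\fm\cdot\syz(\ov M)\subseteq(\text{free})$ combined with Nakayama.

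For case (iii), the Gorenstein hypothesis with small codimension or small multiplicity, together with bounded Betti numbers, is designed to force every module in $\x$ to be \emph{totally reflexive} — indeed over a Gorenstein ring the maximal Cohen--Macaulay modules are exactly the totally reflexive ones, so showing $\x\subseteq\cm(R)$ is the same as showing $\gd M=0$ for all $M\in\x$. Bounded Betti numbers rules out the possibility that a module in $\x$ "looks like" $k$ (whose Betti numbers grow exponentially once $R$ is not a complete intersection), so the only obstruction to $M\in\cm(R)$ would be $M$ having finite but positive projective dimension over some localization — but then some syzygy of $M$ is free of positive rank at that prime while $M$ is not MCM, and one plays this against the radius bound. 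The role of $\cod R\le 4$ or $\e(R)\le 11$ is presumably to invoke known classification/structure results (e.g. on Gorenstein rings of small codimension, where every such ring is a complete intersection or is governed by the Buchsbaum--Eisenbud / Kustin--Miller structure theory, or results of Avramov--Kustin--Miller on the growth of Betti numbers) that guarantee bounded Betti numbers forces complete intersection behavior locally, reducing case (iii) to the complete intersection case of Conjecture \ref{dtconj} already proved by Dao--Takahashi. The hard part here is that this case is really a citation-assembly argument: the substance is in knowing which structural theorem about small-codimension Gorenstein rings applies, and checking its hypotheses match "bounded Betti numbers in $\x$" — so the obstacle is locating the right input, after which the radius argument is formal.

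Overall, the main obstacle across all three cases is the same conceptual point, handled three ways: translating "$M\in\x$ is not maximal Cohen--Macaulay" into "$\x$ contains modules whose minimal free resolutions are too complicated to be built from finitely many modules via finitely many cone operations," i.e. genuinely contradicting \emph{finite radius}. The cleanest incarnation is (i); (ii) and (iii) require extra structure theory precisely because the duality shortcut of (i) is unavailable.
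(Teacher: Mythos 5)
Your overall strategy — show that if $\x\not\subseteq\cm(R)$ then $\x$ is forced to contain something that contradicts finite radius — agrees with the paper's, but the mechanisms you propose for carrying this out are either imprecise or genuinely different from what is actually needed, and in several places would not work as stated.

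The central quantitative engine you are missing is Proposition~\ref{t2}: if $\Omega^t k\in\x$ and $\rad(\x)<\infty$, then $\dim R\leq t$. This is proved via a local cohomology annihilator estimate (Lemma~\ref{l1}), not via growth of Betti numbers. Combined with Lemma~\ref{l2} (which uses \cite[Proposition 4.2]{DT1} to show that a module $M\in\x$ of depth $t<d$ together with any $\Omega^n k\in\x$ forces $\Omega^t k\in\x$), this gives the contradiction scheme. Your summary \emph{``once $k\in\x$, the radius must be infinite unless $R$ is regular''} misattributes the mechanism: having $k\in\x$ with finite radius merely forces $\dim R=0$, where the theorem is trivial. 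What is actually exploited is the dimension bound, together with the presence of a small-depth module.

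For (i), your plan of iterating $(-)^*$ and $\Omega$ to ``chase associated primes'' and eventually produce $R/\fp$ inside $\x$ is not viable; having an associated prime does not put the corresponding cyclic module in a resolving subcategory. The paper's argument is a clean dichotomy: either $\Ext^1_R(\x,R)=0$, in which case reflexivity and Ext-vanishing of duals are checked directly and $\x\subseteq\g(R)\subseteq\cm(R)$; or $\Ext^1_R(\x,R)\neq0$, in which case Lemma~\ref{l6} produces $M\in\x\cap\md_0(R)$ with $\Ext^1_R(M,R)\neq0$, a \emph{socle} element of $\Ext^1_R(M,R)$ gives a non-split extension $0\to R\to N\to M\to 0$, and dualizing (using $\x^*\subseteq\x$) yields an exact sequence whose cokernel is $k$, putting $\Omega^2k$ in $\x$ by \cite[Lemma 4.3]{T}. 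That socle trick is the crux, and it is absent from your outline. For (ii), the paper does not pass to $R/(\underline x)$ and split off $k$ from a depth-zero module directly (which does not obviously work for arbitrary depth-zero modules); instead it invokes the notion of a quasi-decomposable maximal ideal and cites \cite[Lemma 4.4]{NT}, which gives $\Omega^dk\in\x$ as soon as $\x\cap\cm(R)\supsetneq\add(R)$; the remaining case $\x\cap\cm(R)=\add(R)$ forces $\x\subseteq\Pd(R)=\add(R)$ by Lemma~\ref{t}. For (iii), your proposed reduction ``to the complete intersection case of the conjecture'' is not what happens and is not available from the hypotheses (the ring need not be a complete intersection). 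The paper uses \cite[Theorem 1.6]{Av} and \cite[Theorem 1.2]{GP} to conclude that a module with bounded Betti numbers is eventually periodic, then Proposition~\ref{pr1}: eventual periodicity of a module $M$ of finite positive G-dimension forces $\Omega^{-2}\Omega^mM\cong\Omega^{t-2}\Omega^mM\in\x$ (where $m=\gd M$), and a pushout argument in the spirit of \cite[Theorem 4.10]{DT} then produces a non-free module of finite projective dimension in $\x$, contradicting finite radius via Lemma~\ref{t}.
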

\noindent
Here $\cod R$ denotes the codimension of $R$ and $\e(R)$ stands for the multiplicity of $R$ with respect to the maximal ideal. A typical example of resolving subcategories closed under $R$-duals is $\g(R)$, the subcategory of totally reflexive modules.

It is quite natural to ask when a given resolving subcategory contains (some syzygy of) the residue field.
This is closely related to the vanishing problems of Tor and Ext modules.
Let $M$ be an $R$-module, and suppose that its resolving closure contains some syzygy of $k$.
Then $M$ is a so-called {\em test} module; an $R$-module $N$ has finite projective dimension whenever $\Tor_{\gg0}^R(M,N)=0$.
This question is also related to the radius of a resolving subcategory. For a resolving subcategory $\x$, over a local ring of positive dimension, if $\x$ contains the residue field, then it has infinite radius \cite[Theorem 4.4]{DT} (see also Proposition \ref{t2} and Lemma \ref{l2}).
In the second part of this paper, we study resolving subcategories which are closed under cosyzygies. A typical example of such subcategories is again $\g(R)$.
It is observed that, over a Cohen--Macaulay local ring $R$ of positive dimension, $\cm(R)$ is closed under cosyzygies if and only if $R$ is Gorenstein (i.e. $\cm(R)=\g(R)$).
We obtain the following result, as a special case of Theorem \ref{t5}.
\begin{thm}\label{tt}
Let $R$ be a local ring with residue field $k$, and let $\x$ be a resolving subcategory of $\md R$ closed under cosyzygies.
Then $\x$ either contains $k$ or is contained in $\g(R)$. In particular, if $\dim R>0$ and $\x$ has finite radius, then $\x\subseteq\g(R)$ and Conjecture \ref{dtconj} holds true.
\end{thm}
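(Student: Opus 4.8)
The plan is to prove the dichotomy — $\x$ contains $k$, or $\x\subseteq\g(R)$ — and then read off the rest. Once the dichotomy is in hand the ``in particular'' is immediate: when $\dim R>0$ and $\x$ has finite radius one has $k\notin\x$ by \cite[Theorem~4.4]{DT} (see also Proposition~\ref{t2} and Lemma~\ref{l2}), so $\x\subseteq\g(R)$, and over a Cohen--Macaulay ring every totally reflexive module has depth $\depth R=\dim R$ and so is maximal Cohen--Macaulay, whence $\x\subseteq\g(R)\subseteq\cm(R)$ and Conjecture~\ref{dtconj} holds for $\x$. So assume $k\notin\x$; the goal is to show that every $M\in\x$ is totally reflexive. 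Note that, being resolving, $\x$ is closed under syzygies, hence — by the hypothesis — under both syzygies and cosyzygies; thus together with each $M$ it contains all the iterated syzygies and cosyzygies of $M$.

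The strategy is to rule out, one at a time, the possible failures of total reflexivity of a module $M\in\x$: (a) $\Ext^i_R(M,R)\ne0$ for some $i\ge1$; (b) $M$ is not reflexive; (c) $\Ext^i_R(M^{*},R)\ne0$ for some $i\ge1$. In each case the aim is to show that the defect, propagated through the syzygy and cosyzygy closures of $\x$, forces a nonzero $R$-module of finite length into $\x$, whence $k\in\x$ (a resolving subcategory containing a nonzero finite-length module contains $k$ — this may be read off from \cite[Theorem~4.4]{DT} and the results cited above, or from the description of resolving subcategories in terms of their non-free loci, possibly after passing to the completion), contradicting $k\notin\x$. For (a): a nonzero class in $\Ext^i_R(M,R)=\Ext^1_R(\Omega^{i-1}M,R)$ gives a non-split extension $0\to R\to E\to\Omega^{i-1}M\to0$, with $E\in\x$ since $\x$ is extension-closed; dualizing pins down a proper annihilator ideal and, after localizing at a minimal prime of the support of the offending $\Ext$-module and inducting on $\dim R$ (dividing by a nonzerodivisor at each step), produces a finite-length module inside $\x$ via closure under kernels of epimorphisms and under cosyzygies. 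For (b) and (c) one argues similarly but through the Auslander transpose: iterated cosyzygies make the transposes of members of $\x$, and of their $R$-duals, available inside $\x$ up to free summands, and feeding these into the fundamental exact sequence $0\to\Ext^1_R(\Tr L,R)\to L\to L^{**}\to\Ext^2_R(\Tr L,R)\to0$ converts a defect of type (b) or (c) into a nonzero module of positive grade whose $R$-dual vanishes, which is then reduced to finite length as before.

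I expect the principal obstacle to be precisely this reduction ``defect $\Rightarrow$ finite-length module in $\x$'': one must track, carefully and uniformly in $\dim R$, how the cosyzygy operation interacts with the Auslander transpose, with $R$-duality, and with localization, and it is exactly here that closure under cosyzygies is used in an essential way. This is consistent with the paper's observation that $\cm(R)$ is closed under cosyzygies exactly when $R$ is Gorenstein — precisely the case in which $\cm(R)=\g(R)$, so that the second alternative of the dichotomy is automatic for any resolving $\x\subseteq\cm(R)$, while in the non-Gorenstein case $\cm(R)$ is simply not available as a cosyzygy-closed resolving subcategory lying outside $\g(R)$.
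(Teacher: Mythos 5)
Your derivation of the ``in particular'' clause from the dichotomy is correct, and your identification of the dichotomy as the heart of the matter matches the paper. The gap is in the dichotomy itself: the mechanism you propose for converting a non--totally-reflexive $M\in\x$ into membership $k\in\x$ does not hold up, and you honestly flag it as ``the principal obstacle,'' so let me say concretely why it fails as stated.

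Your pivot is the claim that a resolving subcategory containing a nonzero finite-length module contains $k$. This is false in general. For instance, over an Artinian local ring $R$ that is not Gorenstein but admits an exact zero-divisor pair (so that $\g(R)\supsetneq\add R$), the subcategory $\g(R)$ is resolving, contains a nonprojective finite-length module, and does not contain $k$. Even restricting to subcategories closed under cosyzygies does not rescue the claim without further argument, and you do not supply one; the references you gesture at (\cite[Theorem~4.4]{DT}, nonfree loci, completion) do not give it. The paper does not rely on any such implication. Instead, after using Lemma~\ref{l6} to find $M\in\x$ locally free on the punctured spectrum with $\Ext^2_R(M,R)\ne0$ (in the $\depth R=0$ case), it takes a nonzero socle element $\sigma\in\Ext^2_R(M,R)\cong\Ext^1_R(\Omega M,R)$, realizes it as an extension $0\to R\to Y\to\Omega M\to 0$ with $Y\in\x$, dualizes, rotates, and dualizes again to produce the exact sequence $0\to k^{\oplus r}\to(\Omega\Tr Y)^*\to(\Omega\Tr\Omega M)^*\to0$. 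The middle and right terms lie in $\x$ because $(\Omega\Tr N)^*\approx\Omega^2\Omega^{-1}N$ (combining Lemma~\ref{le} with Proposition~\ref{p1}(iii)) and $\x$ is closed under syzygies and cosyzygies; then closure under kernels of epimorphisms and direct summands gives $k\in\x$. So the paper exhibits $k$ explicitly as a summand of a kernel, rather than invoking a ``finite length $\Rightarrow$ $k$'' principle. You would need to replace your shortcut with an argument of this shape.

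Your suggested reduction by localizing at a minimal prime of the support and inducting on $\dim R$ is also not what the paper does, and it would need care: what one actually gains from Lemma~\ref{l6} is a \emph{fixed} module in $\x$ locally free on the punctured spectrum, so that the offending $\Ext$-module is already of finite length over $R$; no dimension induction is needed. (In the positive-depth case the paper proceeds through three claims controlling $\Ext^{n+1}_R(\x,R)$, $\gd$ on $\x\cap\md_0(R)$, and the $\Ext^1$-vanishing case, again without localization and induction.) The first branch of your case analysis --- showing $\Ext^{>0}_R(\x,R)=0$ forces total reflexivity --- is close in spirit to the paper's Claim III and to the $\depth R=0$ sub-case, where closure under cosyzygies is used to place $\Ext^{i+1}_R(\Tr M,R)$ in $\x$ and then kill it by a grade argument; but to make your branches (b) and (c) into proofs you would again need to produce $k$ explicitly via an exact sequence in $\x$, not via the unsupported finite-length principle.
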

\noindent
We also prove that every resolving subcategory $\x$ with $\x=\Omega\x$ is contained in $\g(R)$ (see Theorem \ref{t4}). Here,  $\Omega$ stands for the syzygy functor. As a consequence of Theorem \ref{tt}, for a non-trivial resolving subcategory $\x$ of finite radius, over a Golod local ring $R$ of positive dimension, it is shown that $\x$ is closed under cosyzygies if and only if $R$ is a hypersurface.

The organization of this paper is as follows.
In section 2, we collect preliminary notions,
definitions and some known results which will be used in this paper. In section 3, we study the resolving subcategory with finite radius. We prove our first main result, Theorem \ref{t1}, in this section. In section 4, we study resolving subcategories which are closed under cosyzygies. We prove Theorem \ref{tt} in this section.
\section{Notations and Preliminary results}
Throughout this paper, $R$ denotes a commutative Noetherian ring and all $R$-modules are assumed to be finitely generated.
All subcategories are full and strict. (Recall that a subcategory $\x$ of a category $\mathcal{C}$ is called strict provided that for objects $M$ , $N\in\mathcal{C}$ with $M\cong N$, if  $M$ is in $\x$, then so is $N$.)
For a subcategory $\x$ of $\md R$, we denote by $\add\x$ the
additive closure of $\x$, namely, the subcategory of $\md R$ consisting of direct summands of finite direct sums of modules in $\x$. When $\x$ consists of a single module $M$, we simply denote it by $\add M$. Over a local ring $R$ with maximal ideal $\fm$, we denote by $\md_0(R)$ the subcategory of $\md(R)$ consisting of modules that are locally free on the punctured spectrum
$\Spec R\setminus \{\fm\}$. We say that two modules $X$ and $Y$ are \emph{stably isomorphic} and write $X\approx Y$ if $X\oplus P\cong Y\oplus Q$ for some projective modules $P$ and $Q$.

We recall the definition of an approximation with respect to a subcategory.
\begin{dfn}
	Let $\A$ be an additive category, and let $\x$ be a subcategory of $\A$.
	\begin{enumerate}[(i)]
		\item
		A morphism $f:X\to M$ (resp. $f:M\to X$) in $\A$ with $X\in\x$ is called a {\em right} (resp. {\em left}) {\em $\x$-approximation} of $M$ if for every morphism $f':X'\to M$ (resp. $f':M\to X'$) with $X'\in\x$ factors through $f$, that is, $f'=fg$ (resp. $f'=gf$) for some morphism $g:X'\to X$ (resp. $g:X\to X'$).
		\item
		A right (resp. left) $\x$-approximation $f:X\to M$ (resp. $f:M\to X$) is called {\em minimal} if every endomorphism $g:X\to X$ with $f=fg$ (resp. $f=gf$) is an automorphism.
	\end{enumerate}
\end{dfn}

\begin{rmk}
	For $\A=\md(R)$ and $\x=\add R$ one has the following.
	\begin{enumerate}[(i)]
		\item
		A right $\x$-approximation (resp. a minimal right $\x$-approximation) is nothing but a surjective homomorphism from a projective $R$-module (resp. a projective cover).
		\item
		Let $M$ be an $R$-module.
		Denote by $\lambda:M\to M^{**}$ the natural homomorphism.
		Take a surjection $\pi:P\to M^*$ with $P$ projective.
		Then the composition $\pi^*\circ\lambda:M\to P^*$ is a left $\x$-approximation.
	\end{enumerate}
\end{rmk}

In the following we recall the definitions of the syzygy, transpose and cosyzygy of a given module, which will be used throughout the paper.
\begin{dfn}
	Let $M$ be an $R$-module.	
	\begin{enumerate}[(i)]
		\item{Assume that $P_0\overset{d_0}{\rightarrow}M$ is a right $\add(R)$-approximation of $M$. Then, the kernel of $d_0$ is called the {\em first syzygy} of $M$ and denoted by
			$\Omega^1M$ (or $\Omega_R^1M$) which is unique up to projective equivalence. Inductively, we define the {\em $n$-th syzygy} module of $M$, $\Omega^{n}M:=\Omega^{1}(\Omega^{n-1}M)$ for all $n>0$. For convention, we set 	$\Omega^0M = M$.}
		\item{Let $P_1\rightarrow P_0\overset{d_0}{\rightarrow}M\rightarrow0$ be a projective presentation of $M$. The cokernel of the $R$-dual map $d_0^*:P_0^*\rightarrow P_1^*$ is called the {\em (Auslander) transpose} of $M$ and denoted by $\Tr M$ (or $\Tr_RM$). Therefore we have the following exact sequence:
			$$0\rightarrow M^*\rightarrow P_0^*\rightarrow P_1^*\rightarrow\Tr M\rightarrow0.$$
		The transpose of a module is unique up to projective equivalence.}
		\item{Let $M\overset{d_{-1}}\rightarrow P_{-1}$
			be a left $\add(R)$-approximation of $M$. Then we call the cokernel of $d_{-1}$ the {\em first cosyzygy} of $M$ and denote it by $\Omega^{-1}M$ (or $\Omega^{-1}_RM$). Inductively, we define the {\em $n$-th cosyzygy} module of $M$, $\Omega^{-n}M:=\Omega^{-1}(\Omega^{-(n-1)}M)$ for all $n>0$. }
	\end{enumerate}	
Whenever $R$ is local, we define the syzygy, cosyzygy and transpose by using a minimal right $\add(R)$-approximation, a minimal left $\add(R)$-approximation and a minimal projective presentation, respectively so that they are uniquely determined up to isomorphism.
\end{dfn}

\begin{lem}\label{le}
One has $\Omega^{-1}M\approx\Tr\Omega\Tr M$ for all $R$-modules $M$.
\end{lem}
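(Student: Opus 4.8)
The plan is to compute both sides from a single projective presentation
$P_1\xrightarrow{d_1}P_0\xrightarrow{d_0}M\to0$ of $M$ (minimal, if $R$ is local), and then to match the two resulting cokernels using naturality of the biduality map $\lambda\colon\Id\to(-)^{\ast\ast}$. Throughout, all identities are read up to stable isomorphism, which is harmless since $\Tr$, $\Omega$ and $\Omega^{-1}$ are well defined only up to projective equivalence and both respect $\approx$.

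First I would pin down the shape of $\Omega\Tr M$. Applying $\Hom_R(-,R)$ to the presentation yields an exact sequence $0\to M^\ast\xrightarrow{d_0^\ast}P_0^\ast\xrightarrow{d_1^\ast}P_1^\ast\to\Tr M\to0$; since $P_1^\ast$ is projective and $P_1^\ast\to\Tr M$ is surjective, its kernel $\im d_1^\ast$ is a syzygy of $\Tr M$, so $\Omega\Tr M\approx P_0^\ast/M^\ast$, where $M^\ast$ is embedded in $P_0^\ast$ via $d_0^\ast$. Next, choosing a surjection $\epsilon\colon Q\to M^\ast$ with $Q$ finitely generated projective (possible as $R$ is Noetherian and $M^\ast$ finitely generated), the sequence $Q\xrightarrow{d_0^\ast\epsilon}P_0^\ast\to P_0^\ast/M^\ast\to0$ is a projective presentation, so dualizing once more gives $\Tr\Omega\Tr M\approx\coker\bigl(\epsilon^\ast\circ d_0^{\ast\ast}\colon P_0^{\ast\ast}\to Q^\ast\bigr)$. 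On the other side, the Remark following the definition of approximations tells us that $\epsilon^\ast\circ\lambda_M\colon M\to Q^\ast$ is a left $\add R$-approximation of $M$, whence $\Omega^{-1}M\approx\coker\bigl(\epsilon^\ast\circ\lambda_M\colon M\to Q^\ast\bigr)$.

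It then remains to compare these cokernels, and this is where naturality enters: the square for $\lambda$ applied to $d_0$ gives $d_0^{\ast\ast}\circ\lambda_{P_0}=\lambda_M\circ d_0$. Since $P_0$ is finitely generated projective, $\lambda_{P_0}$ is an isomorphism, and since $d_0$ is surjective, $\im d_0^{\ast\ast}=\im(\lambda_M\circ d_0)=\lambda_M(M)=\im\lambda_M$. Applying $\epsilon^\ast$ yields $\im(\epsilon^\ast d_0^{\ast\ast})=\im(\epsilon^\ast\lambda_M)$ as submodules of $Q^\ast$, so the two cokernels coincide and $\Omega^{-1}M\approx\Tr\Omega\Tr M$ (indeed literally isomorphic once compatible choices are fixed).

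The genuinely load-bearing step is the identification $\Omega\Tr M\approx P_0^\ast/M^\ast$ together with the choice of the presentation $Q\xrightarrow{d_0^\ast\epsilon}P_0^\ast\to P_0^\ast/M^\ast\to0$: it is this choice that makes the second dualization produce exactly the map $\epsilon^\ast d_0^{\ast\ast}$, so that the final comparison is the one-line naturality argument rather than a more delicate diagram chase. Everything else is routine bookkeeping with $\approx$. (Alternatively one could deduce the statement from the two identities $\Tr\Omega N\approx\Omega^{-1}\Tr N$ and $\Tr\Tr M\approx M$, but proving those separately is no shorter than the direct computation above.)
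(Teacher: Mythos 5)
Your proof is correct and follows essentially the same route as the paper: both dualize a projective presentation of $M$ to identify $\Omega\Tr M$, take a surjection $Q\twoheadrightarrow M^\ast$ with $Q$ projective to form a presentation of $\Omega\Tr M$, dualize once more, and then invoke naturality of the biduality map $\lambda$ together with surjectivity of $d_0$ and invertibility of $\lambda_{P_0}$ to match the two cokernels. The only cosmetic difference is that the paper records the final comparison as a two-row commutative diagram while you argue directly on images in $Q^\ast$; the content is identical.
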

\begin{proof}
Let $P_1\to P_0\xrightarrow{\pi} M\to0$ be a projective presentation of an $R$-module $M$.
Dualizing this by $R$ gives an exact sequence $0\to M^*\xrightarrow{\pi^*}P_0^*\to\Omega\Tr M\to0$.
Taking a surjection $\varepsilon:Q\to M^*$ with $Q$ projective, we get a projective presentation $Q\xrightarrow{\pi^*\varepsilon}P_0^*\to\Omega\Tr M\to0$, which induces an exact sequence $P_0^{**}\xrightarrow{\varepsilon^*\pi^{**}}Q^*\to\Tr\Omega\Tr M\to0$.
For each module $X$, denote by $\lambda_X:X\to X^{**}$ the natural homomorphism.
As $\lambda_{P_0}:P_0\to P_0^{**}$ is an isomorphism, we have an exact sequence $P_0\xrightarrow{\varepsilon^*\pi^{**}\lambda_{P_0}}Q^*\to\Tr\Omega\Tr M\to0$.
Thus we get a commutative diagram
$$
\xymatrix{
	M\ar[r]^{\varepsilon^*\lambda_M} & Q^*\ar[r] & \Omega^{-1}M\ar[r] & 0 \\
	P_0\ar@{->>}[u]^{\pi}\ar[r]^{\varepsilon^*\pi^{**}\lambda_{P_0}} & Q^*\ar@{=}[u]\ar[r] & \Tr\Omega\Tr M\ar[r] & 0
	}$$
with exact rows.
This induces an isomorphism $\Tr\Omega\Tr M\overset{\cong}{\to}\Omega^{-1}M$.
\end{proof}
In the following, we collect some basic properties of syzygies and transposes which will be used throughout the paper basically without reference (see \cite{DT1} for more details).
\begin{prop}\label{p1}
	For an $R$-module $M$ the following hold:
	\begin{enumerate}[(i)]
		\item{Let $0\rightarrow L\rightarrow M\rightarrow N\rightarrow0$ be an exact sequence of $R$-modules. Then one has exact sequences
			$$0\rightarrow\Omega N\rightarrow L\rightarrow M\rightarrow0,$$
			$$0\rightarrow\Omega L\rightarrow\Omega M\rightarrow \Omega N\rightarrow0,$$
			$$0\rightarrow N^*\rightarrow M^*\rightarrow L^*\rightarrow\Tr N\rightarrow\Tr M\rightarrow\Tr L\rightarrow0$$
		of $R$-modules (up to projective summands).}
		\item{For an integer $n>0$, there exists an exact sequence
			$$0\rightarrow\Ext^n_R(M,R)\rightarrow\Tr\Omega^{n-1}M\rightarrow\Omega\Tr\Omega^nM\rightarrow0.$$
		}
		\item{There are stable isomorphisms $(\Tr M)^*\approx\Omega^2M$ and $M^*\approx\Omega^2\Tr M$.}
	\end{enumerate}
\end{prop}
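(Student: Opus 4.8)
The three statements are classical Auslander--Bridger-type facts, so the plan is to reduce each to routine manipulations with projective presentations, supplemented by one pullback and one application of the horseshoe lemma for part (i). Since the sequences are only asserted up to projective summands (resp. up to stable isomorphism in (iii)), I will move freely between minimal and arbitrary presentations without comment. For part (i), I would obtain the first sequence $0\to\Omega N\to L\to M\to0$ by forming the pullback $E$ of $M\to N$ along a projective cover $P\to N$: the projection $E\to P$ splits because $P$ is projective, so $E\cong L\oplus P$, and reading off the kernel of the other projection $E\to M$ gives $0\to\Omega N\to L\oplus P\to M\to0$. The second sequence is the horseshoe lemma applied to $0\to L\to M\to N\to0$: compatible projective resolutions of $L$ and $N$ produce one of $M$ with terms the direct sums, and the induced maps on first syzygies give the stated short exact sequence. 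For the six-term sequence I would likewise build, via the horseshoe, a projective presentation of $M$ whose terms are the termwise direct sums of chosen presentations of $L$ and $N$; this exhibits the three two-term presentations as a termwise split short exact sequence of complexes. Applying $(-)^*=\Hom(-,R)$ preserves termwise split exactness, and the long exact cohomology sequence of the resulting short exact sequence of two-term complexes, whose $H^0$ and $H^1$ of the complex attached to a module $X$ are $X^*$ and $\Tr X$, is exactly $0\to N^*\to M^*\to L^*\to\Tr N\to\Tr M\to\Tr L\to0$.

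For part (ii), dimension shifting gives $\Ext^n_R(M,R)\cong\Ext^1_R(\Omega^{n-1}M,R)$ and $\Omega\Tr\Omega^nM=\Omega\Tr\Omega(\Omega^{n-1}M)$, so it suffices to treat $n=1$, writing $X$ for $\Omega^{n-1}M$. Take a minimal projective presentation $P_1\xrightarrow{d_1}P_0\to X\to0$ and factor $d_1$ as $P_1\xrightarrow{\pi}\Omega X\to P_0$ with $\pi$ a projective cover and the second map the inclusion; then extend $\pi$ to $P_2\xrightarrow{d_2}P_1\to\Omega X\to0$. Now $\Tr X\cong P_1^*/\im(d_1^*)$ with $d_1^*$ factoring through $(\Omega X)^*$ via $\pi^*$, while $\Omega\Tr\Omega X\cong\im(d_2^*)\cong P_1^*/\im(\pi^*)$, where $\pi^*\colon(\Omega X)^*\to P_1^*$ is injective. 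Since $\im(d_1^*)\subseteq\im(\pi^*)$, there is a natural surjection $\Tr X\to\Omega\Tr\Omega X$ whose kernel is $\im(\pi^*)/\im(d_1^*)$; as $\pi^*$ is injective this is isomorphic to the cokernel of the natural map $P_0^*\to(\Omega X)^*$, which is $\Ext^1_R(X,R)$ by the long exact $\Ext$-sequence of $0\to\Omega X\to P_0\to X\to0$. Substituting $X=\Omega^{n-1}M$ yields the desired sequence.

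For part (iii), dualizing the presentation $P_0^*\xrightarrow{d_1^*}P_1^*\to\Tr M\to0$ of $\Tr M$ gives $0\to(\Tr M)^*\to P_1^{**}\to P_0^{**}$; since finitely generated projectives are reflexive this identifies $(\Tr M)^*$ with $\ker(d_1\colon P_1\to P_0)=\Omega^2M$, so $(\Tr M)^*\approx\Omega^2M$. Applying this to $\Tr M$ in place of $M$ and using the involution $\Tr\Tr M\approx M$ (read off by dualizing a presentation twice) gives $M^*\approx(\Tr\Tr M)^*\approx\Omega^2\Tr M$.

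None of these steps is deep; the only point that demands real care is the bookkeeping of free summands and of minimality. In particular, one must check that the comparison map constructed in (ii) is independent of the chosen presentation, and note that the identifications in (iii) hold only after stabilization --- which is why ``$\approx$'' appears there rather than ``$\cong$''. Over a local ring, taking minimal presentations throughout removes this ambiguity and makes every isomorphism above genuine.
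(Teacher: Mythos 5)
Your proof is correct; the arguments you give are the standard Auslander--Bridger computations, and no step fails. It is worth noting that the paper itself does not prove Proposition~2.7 at all: it is stated as a collection of well-known facts with a pointer to the reference~\cite{DT1} (Dao--Takahashi, \emph{Classification of resolving subcategories and grade consistent functions}), and the original sources are Auslander--Bridger~\cite{AB} (especially \S 2 there). So there is no ``paper's own proof'' to compare against; you have supplied what the paper delegates to the literature.

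A few small remarks on your write-up. In part~(ii), the reduction to $n=1$ by writing $X=\Omega^{n-1}M$ is unnecessary: if one simply fixes a single projective resolution $(F_\bullet,d_\bullet)$ of $M$ and dualizes, then with $\Omega^{n-1}M=\im d_{n-1}$ and $\Omega^n M=\im d_n$ one has $\Tr\Omega^{n-1}M=F_n^*/\im d_n^*$, $\Omega\Tr\Omega^n M\approx\im d_{n+1}^*\cong F_n^*/\ker d_{n+1}^*$, and $\Ext_R^n(M,R)=\ker d_{n+1}^*/\im d_n^*$, from which the short exact sequence is immediate; this sidesteps the need to reconcile the syzygies appearing in the shift with those in the target statement. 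In part~(ii) you also write $\Omega\Tr\Omega X\cong\im(d_2^*)$; strictly this is $\approx$ in general, and becomes $\cong$ over a local ring when one checks (as you implicitly do by taking minimal presentations) that $P_2^*\to\Tr\Omega X$ is again a projective cover, since $\im d_2\subseteq\fm P_1$ forces $\im d_2^*\subseteq\fm P_2^*$. Finally, in part~(i) the dual modules $L^*,M^*,N^*$ are honest invariants of the modules, so the qualifier ``up to projective summands'' only bites on the $\Tr$-terms; your horseshoe argument automatically produces the correct objects there because the non-minimality is absorbed into the projective equivalence class of $\Tr M$ and $\Omega M$.
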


The notion of Gorenstein dimension was introduced by Auslander \cite{A1}, and deeply developed by Auslander and Bridger in \cite{AB}.
\begin{dfn}
An $R$-module $M$ is called of Gorenstein dimension at most zero (or totally reflexive) if $M\cong M^{\ast\ast}$ (or equivalently, the canonical map $M\rightarrow M^{**}$ is bijective) and $\Ext^i_R(M,R)=\Ext^i_R(M^*,R)=0$ for all $i>0$.
We denote by $\g(R)$ the subcategory of $\md R$ consisting of totally reflexive modules.
\end{dfn}
The \emph{Gorenstein dimension} of $M$, denoted by $\gd_R(M)$, is defined to be the infimum of all
nonnegative integers $n$ such that there exists an exact sequence
$$0\rightarrow G_n\rightarrow\cdots\rightarrow G_0\rightarrow  M \rightarrow 0,$$
where each $G_i$ are totally reflexive. Note by definition that $\gd_R(0)=-\infty$.

In the following, we summarize some basic facts about Gorenstein dimension (see \cite{AB}).
\begin{prop}\label{G}
	For an $R$-module $M$, the following statements hold true.
	\begin{enumerate}[(i)]
		\item{$\gd_R(M)\leq0$ if and only if $\gd_R(\Tr M)\leq0$;}
		\item{(Auslander-Bridger formula) If $M$ has finite Gorenstein dimension, then $$\gd_R(M)=\depth R-\depth_R(M).$$}
		\item{If $\gd_R(M)$ is finite, then $\gd_R(M)=\sup\{i\mid\Ext^i_R(M,R)\neq0\}.$}
		\item{$R$ is Gorenstein if and only if $\gd_R(M)<\infty$ for all finite $R$-module $M$.}
	\end{enumerate}
\end{prop}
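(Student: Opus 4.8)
The plan is essentially to reconstruct the relevant parts of Auslander and Bridger's arguments \cite{AB}, organizing everything around the transpose exact sequences of Proposition~\ref{p1} (together with Lemma~\ref{le}), dimension shifting, and the depth lemma.

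For \emph{(i)}, I would use the two exact sequences obtained by splitting $0\to M^*\to P_0^*\to P_1^*\to\Tr M\to0$ into short exact sequences and taking $\Hom_R(-,R)$-cohomology, namely
$$0\to\Ext^1_R(\Tr M,R)\to M\xrightarrow{\lambda_M}M^{**}\to\Ext^2_R(\Tr M,R)\to0$$
together with the isomorphisms $\Ext^{i+2}_R(\Tr M,R)\cong\Ext^i_R(M^*,R)$ for $i\ge1$, where $\lambda_M$ is the biduality map. Reading these off, $\gd_R M\le0$ — equivalently, $\lambda_M$ is bijective and $\Ext^{>0}_R(M,R)=\Ext^{>0}_R(M^*,R)=0$ — holds precisely when $\Ext^{>0}_R(M,R)=0$ and $\Ext^{>0}_R(\Tr M,R)=0$. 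Since $\Tr\Tr M\approx M$ (apply the transpose construction twice, using $P^{**}\cong P$ for projective $P$), this latter condition is symmetric in $M$ and $\Tr M$, which is exactly (i).

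Statements \emph{(ii)} and \emph{(iii)} I would prove together, by induction on $n:=\gd_R M<\infty$. For (ii) the essential point is the base case $n=0$: a nonzero totally reflexive module $M$ has $\depth_R M=\depth R$. Here the inequality $\ge$ is easy — by Lemma~\ref{le} and (i), the class of totally reflexive modules is closed under $\Omega$ and $\Omega^{-1}$, so $M$ is, up to free summands, an $\ell$-th syzygy of a totally reflexive module for every $\ell\ge0$, and running the depth lemma along a length-$(\depth R)$ piece of a free resolution gives $\depth_R M\ge\depth R$ — while the reverse inequality is a more delicate classical fact; see \cite{AB}. Granting this, the inductive step for (ii) applies the depth lemma to $0\to\Omega M\to P_0\to M\to0$ (with $P_0$ projective and $\gd_R\Omega M=n-1$), checking $n\le\depth R$ so that intermediate depths remain nonnegative. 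For (iii), fix a resolution $0\to G_n\to\cdots\to G_0\to M\to0$ by totally reflexive modules; then $\Omega^nM$ is totally reflexive, and dimension shifting gives $\Ext^i_R(M,R)\cong\Ext^{i-n}_R(\Omega^nM,R)=0$ for $i>n$, so it remains to prove $\Ext^n_R(M,R)\ne0$. If not, then $N:=\Omega^{n-1}M$ satisfies $\Ext^{>0}_R(N,R)=0$ (dimension shifting) and $\gd_R N\le1$; from an exact sequence $0\to G_1\to G_0\to N\to0$ with $G_0,G_1$ totally reflexive, the vanishing $\Ext^1_R(N,R)=0$ yields $0\to N^*\to G_0^*\to G_1^*\to0$, exhibiting $N^*$ as a syzygy of the totally reflexive module $G_1^*$, hence totally reflexive; dualizing once more and chasing the resulting diagram with the five lemma shows $\lambda_N$ is bijective, so $N$ is totally reflexive and $\gd_R M\le n-1$, a contradiction. (The small cases $n\le1$ are handled directly, using $M^*\ne0$ when $0\ne M$ is totally reflexive.)

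For \emph{(iv)}: if every finite $R$-module has finite Gorenstein dimension then in particular $\gd_R k<\infty$ (assuming $R$ local; the general statement follows by localizing at maximal ideals), so by (iii) $\Ext^i_R(k,R)=0$ for $i\gg0$, and a theorem of Bass forces $R$ to have finite injective dimension over itself, i.e.\ $R$ is Gorenstein. Conversely, if $R$ is Gorenstein, say local of depth $d$, then for any finite $M$ the syzygy $\Omega^dM$ is maximal Cohen--Macaulay by the depth lemma, hence totally reflexive since $\cm(R)=\g(R)$ over a Gorenstein local ring, so $\gd_R M\le d<\infty$. The step I expect to be the main obstacle is the reverse inequality in the base case of (ii), namely showing that a nonzero totally reflexive module cannot have depth strictly larger than $\depth R$; everything else reduces to bookkeeping with the transpose exact sequences, dimension shifting, and diagram chases.
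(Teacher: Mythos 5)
The paper gives no proof of Proposition~\ref{G}: it is stated as a summary of standard facts with a bare citation to Auslander--Bridger \cite{AB}, so there is no in-paper argument to compare against. Your sketch is a faithful reconstruction of the classical \cite{AB} route, and I found no errors in what you actually wrote. Part (i) is done cleanly and completely from the two four-term sequences attached to $\Tr M$, the shift $\Ext^{i+2}_R(\Tr M,R)\cong\Ext^i_R(M^*,R)$, and $\Tr\Tr M\approx M$. For (ii) you are right that the crux is the inequality $\depth_R M\le\depth R$ for $0\ne M\in\g(R)$, and you honestly flag it rather than fudge it; your $\ge$ argument via $\Omega^{-1}$-stability of $\g(R)$ (Lemma~\ref{le} together with (i)) plus the depth lemma is correct. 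Do note two further facts you invoke that would also need a citation or proof at the same level of detail as the one you flag: in (iii), that $\gd_R M\le n$ forces $\Omega^nM$ to be totally reflexive (this is the Schanuel-type stability result \cite[Thm.~3.13]{AB}, not an immediate consequence of having \emph{one} length-$n$ resolution by objects of $\g(R)$); and in (iv), that $\Ext^i_R(k,R)=0$ for $i\gg0$ forces $R$ Gorenstein, which is the Bass/Roberts result on Bass numbers rather than merely the equivalence $\id_RR<\infty\Leftrightarrow R$ Gorenstein. Also in the inductive step of (ii) one must rule out $\depth_R M=\depth R$ when $\gd_R M=1$, which uses (iii) or (i); ``checking $n\le\depth R$'' alone doesn't quite do it. None of these are wrong turns---they are simply further places where, like the paper, you are leaning on \cite{AB}.
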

Let $n$ be a positive integer. Recall that an $R$-module $M$ is called {\em $n$-torsion free} if
$\Ext^i_R(\Tr M,R)=0$ for all $1\leq i\leq n$.
An $R$-module $M$ is said to be an {\em$n$-th syzygy} if there exists an exact sequence
$$0\to M \to P_0\to\cdots\to P_{n-1}$$
with the $P_0,\cdots, P_{n-1}$ are projective. By convention, every module is a $0$-th syzygy. We denote by $\Omega^n(\md(R))$
the full subcategory of $\md(R)$ consisting of
$n$-th syzygy modules. For an integer $m$, we define a set $\XX^m(R)=\{\fp\in\Spec R\mid \depth R_\fp\leq m\}$.
\begin{lem}\cite[Theorem 43]{M1}\label{t7}
Let $R$ be a ring and let $M$ be an $R$-module. Assume that $n$ is an integer such that $\gd_{R_\fp}(M_\fp)<\infty$ for all $\fp\in \XX^{n-2}(R)$.
Then $M$ is $n$-torsion free if and only if $M$ is an $n$-th syzygy module.
\end{lem}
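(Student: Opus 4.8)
The plan is to prove the two implications separately. Only the direction ``$M$ is $n$-torsion free $\Rightarrow$ $M$ is an $n$-th syzygy'' is elementary and holds with no hypothesis on $R$; the converse is where the finiteness of Gorenstein dimension on $\XX^{n-2}(R)$ enters.

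For ``$n$-torsion free $\Rightarrow$ $n$-th syzygy'' I would induct on $n$, the case $n\le1$ being clear. The key is the recursion: for $n\ge1$, an $R$-module $M$ is $n$-torsion free if and only if $M$ is torsionless and $\Omega^{-1}M$ is $(n-1)$-torsion free. This follows by dimension shifting $\Ext^\bullet_R(\Tr M,R)$ one step and using Lemma \ref{le} together with the fact that the transpose is an involution up to projective equivalence: the vanishing of $\Ext^i_R(\Tr M,R)$ for $2\le i\le n$ translates into $\Ext^j_R(\Tr\Omega^{-1}M,R)=0$ for $1\le j\le n-1$, i.e. $\Omega^{-1}M$ is $(n-1)$-torsion free, while the vanishing at $i=1$ is exactly torsionlessness of $M$. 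Granting this, if $M$ is $n$-torsion free then $M$ is torsionless, so a left $\add R$-approximation $M\to P_{-1}$ is injective and gives a short exact sequence $0\to M\to P_{-1}\to\Omega^{-1}M\to0$ with $P_{-1}$ free; by the inductive hypothesis $\Omega^{-1}M$ is an $(n-1)$-th syzygy, and splicing the two sequences shows $M$ is an $n$-th syzygy.

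For the converse, assume $M$ is an $n$-th syzygy. It suffices to show $\Ext^i_R(\Tr M,R)=0$ for $1\le i\le n$, which, these being finitely generated, may be checked after localizing at an arbitrary prime $\fp$; since both $\Tr$ and $\Ext$ commute with localization, I am reduced to proving that $M_\fp$ is $n$-torsion free over $R_\fp$, knowing that $M_\fp$ is an $n$-th syzygy over $R_\fp$. Now split on $\depth R_\fp$. If $\depth R_\fp\le n-2$, i.e. $\fp\in\XX^{n-2}(R)$, then $\gd_{R_\fp}(M_\fp)<\infty$ by hypothesis; but a standard depth count along the sequence exhibiting $M_\fp$ as an $n$-th syzygy gives $\depth_{R_\fp}M_\fp\ge\min\{n,\depth R_\fp\}=\depth R_\fp$, so by the Auslander--Bridger formula $\gd_{R_\fp}(M_\fp)=\depth R_\fp-\depth_{R_\fp}M_\fp\le0$; hence $M_\fp$ is totally reflexive over $R_\fp$, so is $\Tr_{R_\fp}M_\fp$, and therefore $\Ext^i_{R_\fp}(\Tr_{R_\fp}M_\fp,R_\fp)=0$ for all $i>0$. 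If $\depth R_\fp\ge n-1$, the desired conclusion is the unconditional local statement ``over a local ring of depth $\ge n-1$, every $n$-th syzygy is $n$-torsion free'', which I would prove by induction on $n$: choose $x$ in the maximal ideal regular on both the ring and on $M_\fp$ — available since $M_\fp$, being torsionless, has $\Ass M_\fp\subseteq\Ass R_\fp$ and $\depth R_\fp\ge1$ — observe that the exact sequence making $M_\fp$ an $n$-th syzygy remains exact modulo $x$ because each of its terms is a submodule of a free module, hence $x$-regular, so $M_\fp/xM_\fp$ is an $(n-1)$-th syzygy over $R_\fp/xR_\fp$, a ring of depth $\ge n-2$; by induction $M_\fp/xM_\fp$ is $(n-1)$-torsion free over $R_\fp/xR_\fp$, and it remains to lift this back.

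The main obstacle is exactly that last lifting step: passing from $(n-1)$-torsion-freeness of $M_\fp/xM_\fp$ over $R_\fp/xR_\fp$ to $n$-torsion-freeness of $M_\fp$ over $R_\fp$. The mechanism is that a minimal presentation of $M_\fp$ reduces modulo the regular element $x$ to one of $M_\fp/xM_\fp$, so $\Tr_{R_\fp/x}(M_\fp/xM_\fp)\approx\Tr_{R_\fp}(M_\fp)\otimes_{R_\fp}R_\fp/x$, and the groups $\Ext^i_{R_\fp}(\Tr_{R_\fp}M_\fp,R_\fp)$ may be compared with $\Ext^i_{R_\fp/x}(\Tr_{R_\fp/x}(M_\fp/xM_\fp),R_\fp/x)$ through the long exact sequence attached to $0\to R_\fp\xrightarrow{x}R_\fp\to R_\fp/xR_\fp\to0$; the delicate point is to secure the requisite $\Tor$-vanishing so that the change-of-rings comparisons are valid in degrees $\le n$. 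Equivalently one can work through the criterion ``$M$ is $n$-torsion free $\iff$ $M$ is reflexive and $\Ext^j_R(M^*,R)=0$ for $1\le j\le n-2$'', which behaves more transparently under reduction by $x$, but the bookkeeping is still the heart of the proof.
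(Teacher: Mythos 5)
The paper does not prove this lemma itself; it cites \cite[Theorem 43]{M1}. Your treatment of the forward implication (``$n$-torsion free $\Rightarrow$ $n$-th syzygy'') via the recursion ``$M$ is $n$-torsion free iff $M$ is torsionless and $\Omega^{-1}M$ is $(n-1)$-torsion free'' is correct, and you are right that this direction needs no hypothesis on $\gd$. The localization reduction and your depth count in Case 1 ($\depth R_\fp\le n-2$) are also fine.

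The genuine gap is in Case 2, and it is more than the unfinished lifting step you flag at the end: the ``unconditional local statement'' you invoke there --- that over a local ring of depth $\ge n-1$ every $n$-th syzygy is $n$-torsion free --- is simply false for $n\ge 2$. Take $R=k[[x,y,t]]/(x,y)^2$, a one-dimensional Cohen--Macaulay local ring, and $N=R/(x,y)\cong k[[t]]$. One computes $\Omega N\cong N^{\oplus 2}$, so $M=\Omega^2N\cong N^{\oplus 4}$ is a second syzygy; but $N^*\cong N^{\oplus 2}$ gives $M^{**}\cong N^{\oplus 16}\not\cong N^{\oplus 4}=M$, so $M$ is not reflexive, i.e.\ not $2$-torsion free, even though $\depth R=1\ge n-1$. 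What fails is exactly the lemma's hypothesis: for the minimal prime $\fp=(x,y)\in\XX^{0}(R)$, the ring $R_\fp$ is Artinian non-Gorenstein and $M_\fp$ is a power of its residue field, so $\gd_{R_\fp}(M_\fp)=\infty$. The moral is that localizing at a prime with $\depth R_\fp\ge n-1$ does not make the $\XX^{n-2}(R)$-hypothesis vacuous: it still bears on the nonmaximal primes $\fq\subsetneq\fp$ of depth $\le n-2$, and any correct proof of Case 2 must invoke it there. Since your reduction-mod-$x$ sketch never appeals to that hypothesis, it cannot succeed as written --- consistent with your own admission that the change-of-rings ``bookkeeping'' is left open.
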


The notion of a resolving subcategory has been introduced by Auslander and Bridger \cite{AB}.
\begin{dfn}
A subcategory $\x$ of $\md R$ is called \emph{resolving} if the following hold:
\begin{enumerate}[(i)]
	\item{$\x$ contains the projective $R$-modules.}
	\item{$\x$ is closed under direct summands (i.e. if $M$ is an $R$-module in $\x$ and $N$ is an $R$-module that is a direct summand of $M$, then $N$ is also in $\x$).}
   \item{$\x$ is closed under extensions (i.e. for an exact sequence $0\rightarrow L\rightarrow M\rightarrow N\rightarrow 0$ of $R$-modules, if $L$, $N$ are in $\x$, then so is $M$).}
    \item{$\x$ is closed under kernels of epimorphisms (i.e. for an exact sequence $0\rightarrow L\rightarrow M\rightarrow N\rightarrow 0$ of $R$-modules, if $M$, $N$ are in $\x$, then so is $L$).}
\end{enumerate}	
\end{dfn}
A subcategory $\x$ of $\md R$ is resolving if and only if $\x$ contains $R$ and is
closed under direct summands, extensions and syzygies \cite[Lemma 3.2]{Y}. There are many examples of resolving subcategories in $\md(R)$.
For example, $\cm(R)$ is a resolving subcategory of $\md R$ if $R$ is Cohen--Macaulay. The subcategory of $\md R$ consisting of totally reflexive $R$-modules is resolving by \cite[3.11]{AB} (see also \cite[Example 1.6]{T} for more examples).

A subcategory $\x$ of $\g(R)$ is called \emph{thick} if
$\x$ is closed under direct summands and that for each exact sequence $0\rightarrow L\rightarrow M\rightarrow N\rightarrow 0$ of totally reflexive $R$-modules, if two of $L$, $M$, $N$ are in $\x$, then so is the
third. Note that a thick subcategory of $\g(R)$
containing $R$ is a resolving subcategory of $\md R$.

Next we recall the definition of the radius of a subcategory defined by Dao and Takahashi \cite{DT}.
\begin{dfn}
Let $\x$ and $\y$ be subcategories of $\md(R)$.	
\begin{enumerate}[(i)]
	\item{The subcategory $[\x]$ is defined as follows:
		  $$[\x]=\add\{R,\Omega^iX\mid X\in\x,\,i\ge0\}.$$
		 When $\x$ consists of a single module $X$, we simply denote it by $[X]$.}
	\item{The subcategory of $\md R$
		consisting of the $R$-modules $M$ which fits into an exact sequence $0\rightarrow X\rightarrow M\rightarrow Y\rightarrow0$ with $X\in\x$ and $Y\in\y$ is denoted by $\x\circ\y$. We set
		$\x\bullet\y=\left[[\x]\circ[\y]\right].$}
	\item{The \emph{ball of radius $n$ centered at} $\x$ is defined as follows:
\begin{equation*}
[\x]_n =
\begin{cases}
[\x] &\text{for $n=1$}\,,\\
[\x]_{n-1}\bullet[\x]=[[\x]_{n-1}\circ[\x]]  &\text{for $n>1$}\,.\\
\end{cases}
\end{equation*}
If $\x$ consists of a single module $X$, then we simply denote $[\x]_n$ by $[X]_n$, and call it the ball of radius $n$ centered at $X$. We write $[\x]^R_n$ when we should specify that $\md R$ is
the ground category where the ball is defined. Note that, by \cite[Proposition 2.2]{DT}, $$[\x]_n=\overset{n}{\overbrace{\x\bullet\cdots\bullet\x}}.$$}
\item{The \emph{radius} of $\x$, denoted by $\rad\x$, is the infimum of the integers $n\geq0$ such that
	there exists a ball of radius $n+1$ centered at a module belonging to $\x$.}
\end{enumerate}	
\end{dfn}
The following result will be used throughout the paper.
\begin{lem}\cite[Propsition 4.9]{DT}\label{t}
Let $R$ be a local ring and let $\x$ be a resolving subcategory of $\md(R)$. If $\x$ contains a module $M$ with $0<\pd_RM<\infty$, then $\rad(\x)=\infty$.
\end{lem}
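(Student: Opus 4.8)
The plan is to assume $\rad(\x)<\infty$ and derive a contradiction, exploiting that finite radius forces a strong finiteness on $\x$ which clashes with the presence of a non-free module of finite projective dimension. I would first reduce to the case $\pd_R M=1$: for $p=\pd_R M$ the module $\Omega^{p-1}M$ still lies in $\x$ (resolving subcategories are closed under syzygies), is non-free, and has projective dimension $1$. I record two points for later. By the Auslander--Buchsbaum formula $\depth R\ge\pd_R M\ge1$, so $\dim R>0$. And since $\pd_{R_\fq}M_\fq<\infty$ for every prime $\fq$, at a minimal prime $\fq$ (where $R_\fq$ is Artinian) $M_\fq$ — and likewise every syzygy of $M$ — is free; hence the non-free loci of $M$ and of its syzygies are \emph{proper} closed subsets of $\Spec R$. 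It is then convenient to localize at a prime $\fp$ minimal in $\Supp_R\Ext^1_R(M,R)=\NF(M)$: localization does not increase the radius, and afterwards $M$ is locally free on the punctured spectrum and $\Ext^1_R(M,R)$ has finite length, so I shall assume this.

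Next I would build an infinite family inside $\x$. Applying $\Hom_R(-,M)$ to the minimal free resolution $0\to R^a\xrightarrow{\phi}R^b\to M\to0$ identifies $\Ext^1_R(M,M)$ with the cokernel of a map $M^b\to M^a$ all of whose entries lie in $\fm$; by Nakayama this cokernel is non-zero, so $\Ext^1_R(M,M)\ne0$. Using this together with $\Ext^{\ge2}_R(M,-)=0$, one constructs inductively non-split extensions $0\to M\to E_m\to E_{m-1}\to0$ with $E_0=M$; each $E_m$ lies in $\x$ by closure under extensions. These are to be chosen so that $\bigcap_m\ann_R\bigl(\underline{\Hom}_R(E_m,E_m)\bigr)=0$. (In the transparent case $M=R/(f)$ with $f$ a non-zerodivisor, take $E_m=R/(f^{m+1})$: as $\Hom_R(E_m,R)=0$ one gets $\underline{\Hom}_R(E_m,E_m)\cong R/(f^{m+1})$, and the assertion is Krull's intersection theorem for $(f)$; the general case needs the same conclusion, which is where the finite length of $\Ext^1_R(M,R)$ obtained from the localization is used.)

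Then I would bring in the radius hypothesis. Since $\rad(\x)<\infty$ we have $\x\subseteq[G]_n$ for some $G\in\x$ and some $n\ge1$. A ball of finite radius carries a non-zero ideal annihilating all its stable homomorphism modules: a suitable power $\fa$ of $\ann_R\bigl(\underline{\Hom}_R(\widehat G,\widehat G)\bigr)$, with $\widehat G=G\oplus\Omega G\oplus\cdots$, satisfies $\fa\cdot\underline{\Hom}_R(X,Y)=0$ for all $X,Y\in[G]_n$, and $\fa\ne0$ because the non-free loci of the syzygies of $G\in\x$ avoid every minimal prime of $R$ (first paragraph), so $\ann_R\bigl(\underline{\Hom}_R(\widehat G,\widehat G)\bigr)$ contains a non-zerodivisor. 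Taking $X=Y=E_m$ then gives $\fa\subseteq\ann_R\bigl(\underline{\Hom}_R(E_m,E_m)\bigr)$ for all $m$, whence $\fa\subseteq\bigcap_m\ann_R\bigl(\underline{\Hom}_R(E_m,E_m)\bigr)=0$, a contradiction; therefore $\rad(\x)=\infty$.

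The hard part will be the construction in the second paragraph — producing, from an arbitrary non-free module of finite projective dimension, a chain of modules in $\x$ whose stable endomorphism rings have trivial common annihilator. The cyclic-torsion model displays the mechanism, but the general case is the substantial point, and the localization of the first paragraph (making $M$ locally free on the punctured spectrum, with $\Ext^1_R(M,R)$ of finite length) is what should render it manageable; a secondary delicate matter is pinning down the precise shape and the non-vanishing of the ideal $\fa$ attached to a ball.
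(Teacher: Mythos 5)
The central obstruction is in your third paragraph, where you need the ideal $\fa$ associated to a ball $[G]_n$ to be non-zero, i.e.\ to contain a non-zerodivisor. You justify this by asserting that ``the non-free loci of the syzygies of $G\in\x$ avoid every minimal prime of $R$ (first paragraph),'' but the first paragraph established this only for the module $M$ of finite projective dimension, not for $G$. The ball generator $G$ is an \emph{arbitrary} module of $\x$ realizing the finite radius; nothing forces $G$ to have finite projective dimension or to be free at minimal primes. For instance, over a non-regular Gorenstein complete local ring $R$ of finite CM type, $\x=\CM(R)$ has radius $0$, yet the generating module $G$ is a non-free maximal Cohen--Macaulay module, and over a non-reduced such $R$ it will generally fail to be free at a minimal prime $\fq$; then $\underline{\Hom}_R(G,G)_\fq\ne0$, so $\ann_R\underline{\Hom}_R(G,G)\subseteq\fq$ consists of zerodivisors and your $\fa$ may vanish. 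Thus the ball does \emph{not} carry the non-zero annihilator ideal you require, and the intended contradiction never materializes.

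There are two further unresolved points. First, the localization step (``localization does not increase the radius'') is not automatic: balls localize, but the \emph{resolving closure} of $\{N_\fp\mid N\in\x\}$ over $R_\fp$ is not obviously contained in the localized ball, and your $E_m$ are constructed in that resolving closure, not in $\x_\fp$ itself; this needs a citation or an argument. Second, and as you acknowledge, the construction in the second paragraph --- producing modules $E_m\in\x$ with $\bigcap_m\ann_R\underline{\Hom}_R(E_m,E_m)=0$ --- is carried out only for $M=R/(f)$; in the general rank-$>1$ case the non-split extensions coming from $\Ext^1_R(M,M)\ne0$ give no control over these annihilators, and this is exactly where the proof must do real work. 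The mechanism you envisage (an unbounded family of annihilators clashing with a uniform bound on a ball) is the right \emph{philosophy} --- it is how Dao--Takahashi and the present paper argue via Loewy lengths of local cohomology, as in Lemma~\ref{l7} and the proposition following it --- but the stable-Hom annihilator is the wrong invariant to run it on, precisely because its non-vanishing on the ball cannot be guaranteed.
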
	
\section{Resolving subcategories with finite radius}
In this section, we study resolving subcategories with finite radius. We prove our first main result, Theorem \ref{t1}.
For an ideal $I$ of $R$ and an $R$-module $M$, we denote by
$\fa^i_I(M)$ the annihilator $\ann_R(\hh^i_{I}(M))$ of the $i$-th local cohomology module of $M$.
The following result is a generalization of \cite[Lemma 4.2]{DT}.
\begin{lem}\label{l1}
Let $R$ be a ring and let $I$ be an ideal of $R$. Assume that $M$ and $C$ are $R$-modules and that $n$ is a positive integer.
If $M\in[C]_n$, then $$\left(\underset{i=0}{\overset{t}{\prod}}\fa_I^i(R)\,\fa_I^i(C)\right)^n\subseteq\fa_I^t(M) \text{ for all } t\geq0.$$
\end{lem}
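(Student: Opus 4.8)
The strategy is an induction on $n$, the radius parameter, using the recursive structure $[C]_n = [[C]_{n-1}\circ[C]]$ and the way annihilators of local cohomology behave under the three basic operations that build up a ball: direct summands (and finite direct sums), syzygies, and short exact sequences.

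\emph{First}, I would establish the base case $n=1$, i.e. handle $M\in[C]_1=[C]=\add\{R,\Omega^iC\mid i\ge0\}$. Here the claim reduces to showing that $\prod_{i=0}^t\fa_I^i(R)\,\fa_I^i(C)$ annihilates $\hh^t_I(M)$. Since $\hh^t_I(-)$ commutes with finite direct sums and behaves well with respect to direct summands, and since the annihilator of a finite direct sum (resp. summand) contains (resp. is contained in) the relevant product of annihilators, it suffices to treat $M=R$ (trivial, as $\fa_I^t(R)$ appears in the product) and $M=\Omega^iC$ for each $i\ge0$. For the latter, I would use the short exact sequence $0\to\Omega^{i}C\to P_{i-1}\to\Omega^{i-1}C\to0$ with $P_{i-1}$ free, take the long exact sequence in local cohomology, and read off that $\hh^t_I(\Omega^i C)$ is squeezed between $\hh^{t}_I(P_{i-1})$ and $\hh^{t-1}_I(\Omega^{i-1}C)$; hence $\fa^t_I(R)\,\fa^{t-1}_I(\Omega^{i-1}C)\subseteq\fa^t_I(\Omega^iC)$. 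Iterating down to $\Omega^0C=C$ gives $\bigl(\prod_{j\le t}\fa^j_I(R)\,\fa^j_I(C)\bigr)\subseteq\fa^t_I(\Omega^i C)$, which is (more than) enough. This is essentially the argument of \cite[Lemma 4.2]{DT}, just keeping track of all degrees $\le t$ simultaneously.

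\emph{Second}, for the inductive step, suppose the statement holds for $n-1$ and let $M\in[C]_n=[[C]_{n-1}\circ[C]]$. Unwinding this, $M$ is (up to the closure operations already covered by the base-case reasoning — direct summands, free summands, and syzygies, each of which only costs a product of $\fa^i_I(R)$'s and preserves the conclusion) obtained from a short exact sequence $0\to X\to Y\to Z\to0$ with $X\in[C]_{n-1}$ and $Z\in[C]$. The long exact sequence in local cohomology gives, for each $t$, an exact piece $\hh^t_I(X)\to\hh^t_I(Y)\to\hh^t_I(Z)$, so $\fa^t_I(X)\,\fa^t_I(Z)\subseteq\fa^t_I(Y)$. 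By the inductive hypothesis $\bigl(\prod_{i\le t}\fa^i_I(R)\,\fa^i_I(C)\bigr)^{n-1}\subseteq\fa^t_I(X)$, and by the base case $\prod_{i\le t}\fa^i_I(R)\,\fa^i_I(C)\subseteq\fa^t_I(Z)$; multiplying, $\bigl(\prod_{i\le t}\fa^i_I(R)\,\fa^i_I(C)\bigr)^{n}\subseteq\fa^t_I(Y)$. Finally I would check that passing from $Y$ back to $M$ through the outer $[-]$ (additive closure, direct summands, syzygies) does not destroy this: summands only enlarge the annihilator, and each syzygy step, as in the base case, costs only an extra factor of $\prod_i\fa^i_I(R)$, which is harmless since that product already divides the $n$-th power on the left. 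Hence $\bigl(\prod_{i\le t}\fa^i_I(R)\,\fa^i_I(C)\bigr)^n\subseteq\fa^t_I(M)$, completing the induction.

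\emph{The main obstacle} is bookkeeping rather than conceptual depth: one must be careful that the closure operations $[-]$ wrapped around $[C]_{n-1}\circ[C]$ — which include taking arbitrarily many syzygies — do not force the exponent on the left-hand side to grow beyond $n$. The key observation that makes it work is that a syzygy step only introduces factors of $\fa^i_I(R)$ (with $i\le t$), and these are already present in the single copy $\prod_{i\le t}\fa^i_I(R)\,\fa^i_I(C)$, so absorbing them into the existing $n$-th power costs nothing. Handling direct sums and summands is routine once one notes $\fa^t_I(A\oplus B)=\fa^t_I(A)\cap\fa^t_I(B)\supseteq\fa^t_I(A)\,\fa^t_I(B)$ and $\fa^t_I(A)\subseteq\fa^t_I(A')$ for a summand $A'$ of $A$. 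I would also remark that $\hh^t_I(-)$ vanishes for $t<0$, so the product over $i=0,\dots,t$ is interpreted as empty (equal to $R$) when $t$ forces no constraint, keeping the $t\ge0$ statement uniform.
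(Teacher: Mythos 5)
Your proposal is correct and follows essentially the same route as the paper's proof: induction on $n$, using the long exact sequence of local cohomology along free presentations to show each syzygy step costs only a factor of $\fa_I^j(R)$ (absorbed into the left-hand product), and then running a short exact sequence with ends in $[C]_{n-1}$ and $[C]$ through the inductive hypothesis. The only cosmetic difference is that the paper invokes \cite[Proposition 2.2(1)]{DT} to present $M\in[C]_n$ directly as a summand of the middle term of such a sequence, whereas you re-derive that reduction by tracking the outer closure operations by hand.
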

\begin{proof}
We argue by induction on $n$. If $n=1$, then $M$ is isomorphic to a direct summand of a finite direct sum of copies of $R\oplus(\underset{j=0}{\overset{m}{\bigoplus}}\Omega^jC)$.
Therefore $\hh^t_I(M)$ is isomorphic to a direct summand of a finite direct sum of copies of $\hh^t_I(R)\oplus(\underset{j=0}{\overset{m}{\bigoplus}}\hh^t_I(\Omega^jC))$. Hence
\begin{equation}\tag{\ref{l1}.1}
\fa_I^t(R)\cap\left(\underset{j=0}{\overset{m}{\bigcap}}\fa_I^t(\Omega^jC)\right)\subseteq\fa_I^t(M).
\end{equation}
For each $j>0$ there is an  exact sequence $0\rightarrow\Omega^{j}C\rightarrow P_{j-1}\rightarrow\Omega^{j-1}C\rightarrow0$, where $P_{j-1}$ is a projective $R$-module. This induces an exact sequence
$\hh^{i-1}_I(\Omega^{j-1}C)\rightarrow\hh^i_I(\Omega^{j}C)\rightarrow\hh^{i}_I(P_{j-1})$, which gives the following inclusions
$$\fa_I^t(R)\fa_I^{t-1}(R)\cdots\fa_I^{t-j+1}(R)\fa_I^{t-j}(C)\subseteq\cdots\subseteq\fa_I^t(R)\fa_I^{t-1}(R)\fa_I^{t-2}(\Omega^{j-2}C)\subseteq\fa_I^t(R)\fa_I^{t-1}(\Omega^{j-1}C)\subseteq\fa_I^t(\Omega^{j}C)$$
for all $j\leq t$. Also
$$\fa_I^t(R)\fa_I^{t-1}(R)\cdots\fa_I^{1}(R)\fa_I^{0}(R)\subseteq\fa_I^t(R)\fa_I^{t-1}(R)\cdots\fa_I^{1}(R)\fa_I^{0}(\Omega^{j-t}C)\subseteq\cdots\subseteq\fa_I^t(R)\fa_I^{t-1}(\Omega^{j-1}C)\subseteq\fa_I^t(\Omega^{j}C)$$ for all $j>t$.
Therefore the assertion is clear for the case $n=1$ by (\ref{l1}.1). Now let $n>1$. We have $M\in[C]_n=[C]_{n-1}\bullet[C]$, and by \cite[Proposition 2.2(1)]{DT}, there exists an exact sequence $0\rightarrow X\rightarrow Y\rightarrow Z\rightarrow0$ with $X\in[C]_{n-1}$ and $Z\in[C]$ such that $M$ is a direct summand of $Y$.
The induction hypothesis shows
$$\left(\underset{i=0}{\overset{t}{\prod}}\fa_I^i(R)\,\fa_I^i(C)\right)^{n}=\left(\underset{i=0}{\overset{t}{\prod}}\fa_I^i(R)\,\fa_I^i(C)\right)^{n-1}\left(\underset{i=0}{\overset{t}{\prod}}\fa_I^i(R)\,\fa_I^i(C)\right)\subseteq\fa^t_I(X)\,\fa^t_I(Z)\subseteq\fa^t_I(Y)\subseteq\fa^t_I(M),$$
which completes the proof.
\end{proof}
The following result can be viewed as a generalization of \cite[Theorem 4.4]{DT}.
\begin{prop}\label{t2}
Let $(R,\fm,k)$ be a local ring and let $\x$ be a resolving subcategory of $\md(R)$. Assume that $\rad(\x)<\infty$ and that $\Omega^tk\in\x$ for some $t\geq0$. Then $\dim R\leq t$.	
\end{prop}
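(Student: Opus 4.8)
The plan is to suppose $\dim R\ge t+1$ and deduce $\rad(\x)=\infty$, contradicting the hypothesis; this forces $\dim R\le t$. Write $d=\dim R\ge t+1$.

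I would first settle the regular case. There $\pd_Rk=d$, so $\pd_R\Omega^tk=d-t$, which is finite and—by assumption—positive; since $\x$ is resolving and contains $\Omega^tk$, Lemma~\ref{t} gives $\rad(\x)=\infty$. So assume $R$ is not regular, whence $\pd_Rk=\infty$ and every $\Omega^ik$ is a nonzero module of infinite projective dimension. Using $\rad(\x)<\infty$, fix $n\ge1$ and $C\in\x$ with $\x\subseteq[C]_n$. As $\x$ is resolving and $\Omega^tk\in\x$, it contains $\Omega^{t+i}k$ for all $i\ge0$; moreover, by the horseshoe lemma applied to a short exact sequence $0\to k\to L\to L'\to0$, together with closedness under extensions and direct summands and $R\in\x$, an induction on length shows $\Omega^tL\in\x$, hence $\Omega^{t+i}L\in\x\subseteq[C]_n$, for every finite-length $R$-module $L$ and all $i\ge0$.

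If $R$ is moreover Cohen--Macaulay, the argument finishes cleanly: pick a system of parameters $\underline{x}=x_1,\dots,x_d$, which is then a regular sequence, so that the Koszul complex on $\underline{x}$ is a minimal free resolution of the finite-length module $R/(\underline{x})$. Then $\Omega^t(R/(\underline{x}))$ has projective dimension exactly $d-t$—finite and positive—and lies in $\x$, so Lemma~\ref{t} again yields $\rad(\x)=\infty$, a contradiction.

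The remaining case, $R$ not Cohen--Macaulay, is the genuinely hard one: no finite-length module has finite projective dimension (Intersection Theorem), so the contradiction cannot be produced by Lemma~\ref{t} and must be squeezed out of Lemma~\ref{l1}. Applying that lemma with $I=\fm$ to $M=\Omega^{t+i}k$ and $M=\Omega^{t+i}L$ ($L$ finite length, $i\ge0$) gives, with $\fb=\prod_{j=0}^{d}\fa^j_\fm(R)\,\fa^j_\fm(C)$ independent of $M$,
$$\fb^{\,n}\ \subseteq\ \fa^s_\fm(M)=\ann_R\hh^s_\fm(M)\qquad(0\le s\le d).$$
The idea is to pit this uniform containment against the structure of these syzygies: one always has $\dim R/\fa^s_\fm(M)\le s$ (the Matlis dual of $\hh^s_\fm(M)$ has dimension $\le s$), so the low-degree terms $s\le t$ are supported in dimension $\le t<d$, whereas $s=d$ applied to an $M$ of full dimension $d$ forces $\dim R/\fb=d$ (using $\fb\subseteq\fa^d_\fm(R)$ and $\dim R/\fa^d_\fm(R)=d$). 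Letting $L$ range over the $R/\fm^N$ and tracking, via the sequences $0\to\Omega^{j+1}L\to(\text{free})\to\Omega^jL\to0$, how the finite-length torsion of $L$ climbs through the first $t$ syzygies, one aims to show that $\fb^{\,n}$ is governed by the low-degree annihilators alone and is therefore forced into dimension $\le t<d$—contradicting $\dim R/\fb=d$. I expect making this last step precise, and seeing why exactly $t$ syzygy steps enter (which is what yields $\dim R\le t$ rather than a stronger bound), to be the main obstacle, and I would expect the auxiliary Lemma~\ref{l2} to carry out part of it.
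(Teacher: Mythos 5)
Your case split handles the regular case and the Cohen--Macaulay case correctly, and your observation that every $\Omega^tL$ with $L$ of finite length lies in $\x$ (proved by induction on length) is exactly the preparatory claim the paper uses. But the non--Cohen--Macaulay case, which you yourself flag as the crux, is left as a sketch, and the sketch as written cannot be completed. You set $\fb=\prod_{j=0}^{d}\fa^j_\fm(R)\fa^j_\fm(C)$; since $\fb\subseteq\fa^d_\fm(R)$ and $\dim R/\fa^d_\fm(R)=d$, one has $\dim R/\fb=d$ automatically, so there is no tension to exploit and no hope of ``forcing $\fb^n$ into dimension $\le t$.'' The decisive move is to \emph{truncate the product at $t$}, exactly as Lemma~\ref{l1} is phrased: set $I=\bigl(\prod_{j=0}^{t}\fa^j_{\hat\fm}(\hat R)\,\fa^j_{\hat\fm}(\hat C)\bigr)^{n+1}$ over the completion. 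Then Lemma~\ref{l1} applied to $M=\Omega^t_{\hat R}(\hat R/\hat\fm^i)$ gives $I\subseteq\fa^t_{\hat\fm}(\Omega^t_{\hat R}(\hat R/\hat\fm^i))$, the syzygy sequences push this down to $I^{t+1}\subseteq\fa^0_{\hat\fm}(\hat R/\hat\fm^i)=\hat\fm^i$ for all $i$, and Krull's intersection theorem forces $I^{t+1}=0$. Since $\V(I)=\bigcup_{j\le t}\V(\fa^j_{\hat\fm}(\hat R))\cup\V(\fa^j_{\hat\fm}(\hat C))$, and each of these loci has dimension $\le j\le t$ (this is where the observation about annihilators of local cohomology enters, and where the passage to $\hat R$ is needed so that local duality applies), nilpotency of $I$ yields $\dim R=\dim\hat R=\dim\hat R/I\le t$. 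So you have the two relevant facts in view---the truncation behaviour of Lemma~\ref{l1} and the bound $\dim R/\fa^s_\fm(-)\le s$---but you apply the lemma with the wrong (full-length) product, which neutralises both. Note also that the paper's proof does not case-split at all: the argument above subsumes your regular and Cohen--Macaulay cases, so the split is an inessential shortcut rather than a genuine reduction, and the only case it does not cover is the one where your proof has the gap.
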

\begin{proof}
Put $n=\rad(\x)$. By definition, there exists an $R$-module $C$ such that $\x\subseteq[C]_{n+1}$. In particular, $\Omega^tk\in[C]_{n+1}$. We claim that $\Omega^tL\in\x$ for all finite length $R$-module $L$. We prove the claim by induction on $m=\ell(L)$. If $m=1$, then $L\cong k$ and so the assertion is clear. Now let $m>1$ and consider the following exact sequence $0\rightarrow L_1\rightarrow L\rightarrow k\rightarrow0$, where $\ell(L_1)=m-1$. From the above exact sequence we get the following exact sequence $0\rightarrow\Omega^t L_1\rightarrow\Omega^t L\oplus F\rightarrow\Omega^tk\rightarrow0$,
where $F$ is a free $R$-module. It follows from the above exact sequence and the induction hypothesis that $\Omega^t L\in\x$ and the claim is proved.
In particular, $\Omega^t_R(R/\fm^i)\in[C]_{n+1}^R$ for all $i>0$. Therefore $\Omega^t_{\hat{R}}(\hat{R}/\hat{\fm}^i)\cong\hat{\Omega^t_{R}(R/\fm^i)}\in[C]_{n+1}^{\hat{R}}$. Set $I=\underset{i=0}{\overset{t}{\prod}}\left(\fa^i_{\hat{\fm}}(\hat{R}).\fa^i_{\hat{\fm}}(\hat{C})\right)^{n+1}$. It follows from Lemma \ref{l1} that $I\subseteq\fa^t_{\hat{\fm}}(\Omega^t_{\hat{R}}(\hat{R}/\hat{\fm}^i))$. Now it is easy to see that
$I^{t+1}\subseteq\fa^0_{\hat{\fm}}(\hat{R}/\hat{\fm}^i)=\hat{\fm}^i$ for all $i>0$. Hence $I^{t+1}=0$ and so
$$\dim R=\dim \hat{R}=\dim (\hat{R}/I^{t+1})=\dim \hat{R}/I\leq t$$ by \cite[Theorem 8.1.1]{BH}.
\end{proof}
Let $\x$ be a subcategory of $\md(R)$. We denote by $\res\x$ (or $\res_R \x$) the resolving closure
of $\x$, namely, the smallest resolving subcategory of $\md(R)$ containing $\x$. If $\x$ consists of a single module $M$, then we simply denote it by $\res(M)$ (or $\res_R(M)$).
\begin{lem}\label{l2}
Let $(R,\fm,k)$ be a Cohen--Macaulay local ring of dimension $d$ and let $\x$ be a resolving subcategory of $\md(R)$ with $\rad(\x)<\infty$. If $\Omega^nk\in\x$ for some $n\geq0$, then $\x\subseteq\cm(R)$.	
\end{lem}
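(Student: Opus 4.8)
The plan is to combine the dimension bound of Proposition~\ref{t2} with the annihilator estimate of Lemma~\ref{l1}. Write $d=\dim R$. Applying Proposition~\ref{t2} with $t=n$ gives $d\le n$, so iterating the depth lemma along $0\to\Omega^{i}k\to F_{i-1}\to\Omega^{i-1}k\to0$ yields $\depth_{R}\Omega^{n}k\ge\min(n,d)=d$; thus $\Omega^{n}k$ is maximal Cohen--Macaulay, and the same estimate shows $\Omega^{n}L$ is maximal Cohen--Macaulay for every $R$-module $L$ of finite length. Recall moreover, from the filtration argument in the proof of Proposition~\ref{t2}, that $\Omega^{n}L\in\x$ for every finite length $L$; in particular $\x$ contains $R$ together with all the maximal Cohen--Macaulay modules $\Omega^{n}(R/\fm^{i})$.

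Next I would reduce the statement to a depth inequality: it suffices to show that every $M\in\x$ satisfies $\depth_{R}M\ge d$, i.e.\ is maximal Cohen--Macaulay. As $\rad(\x)=r<\infty$, fix $C\in\x$ with $\x\subseteq[C]_{r+1}$. For any $M\in\x$ and any $t\ge0$, Lemma~\ref{l1} applied with $I=\fm$ gives
\[
\Bigl(\prod_{i=0}^{t}\fa^{i}_{\fm}(R)\,\fa^{i}_{\fm}(C)\Bigr)^{r+1}\subseteq\fa^{t}_{\fm}(M).
\]
Since $R$ is Cohen--Macaulay of dimension $d$ one has $\hh^{i}_{\fm}(R)=0$, hence $\fa^{i}_{\fm}(R)=R$, for all $i<d$; and $\hh^{i}_{\fm}(C)=0$, hence $\fa^{i}_{\fm}(C)=R$, for all $i<\depth_{R}C$. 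Thus, whenever $t<\min\{d,\depth_{R}C\}=\depth_{R}C$, the ideal on the left equals $R$, forcing $\fa^{t}_{\fm}(M)=R$ and $\hh^{t}_{\fm}(M)=0$. Hence $\depth_{R}M\ge\depth_{R}C$ for every $M\in\x$, and the proof is reduced to showing that a module $C\in\x$ of smallest depth is maximal Cohen--Macaulay.

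The main obstacle is exactly this final step: ruling out the existence of $C\in\x$ with $s:=\depth_{R}C<d$. Here the hypothesis $\Omega^{n}k\in\x$ must genuinely be exploited --- without it one would be proving Conjecture~\ref{dtconj} unconditionally. I would argue by contradiction: from a maximal $C$-regular sequence $x_{1},\dots,x_{s}$ in $\fm$, the quotient $C/(x_{1},\dots,x_{s})C$ has depth zero and so contains $k$ as a submodule; then, using repeatedly that a resolving subcategory is closed under kernels of epimorphisms, together with the exact sequences of Proposition~\ref{p1}(i) attached to the successive quotients $C\twoheadrightarrow C/x_{1}C\twoheadrightarrow\cdots$ and the finite-length syzygies $\Omega^{n}L\in\x$ from the first step, one aims to transport into $\x$ either a syzygy $\Omega^{t}k$ with $t<d$, contradicting Proposition~\ref{t2}, or a module of finite positive projective dimension, contradicting Lemma~\ref{t}. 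Pinning down this extraction --- tracking precisely which syzygies appear as kernels of epimorphisms between members of $\x$ and how they combine with $\Omega^{n}k$ --- is the technical heart of the argument; the rest is bookkeeping with depth and the resolving axioms.
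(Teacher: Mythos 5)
The first two paragraphs of your argument are correct but are ultimately a detour. You correctly deduce $d\le n$ from Proposition~\ref{t2}, that $\Omega^n L\in\x$ for every finite-length $L$ (via the filtration argument inside the proof of Proposition~\ref{t2}), and that Lemma~\ref{l1} with $I=\fm$ forces $\depth_R M\ge\min\{d,\depth_R C\}$ for all $M\in\x$. But since the center $C$ itself lies in $\x$, the inequality $\depth_R M\ge\depth_R C$ is only useful once you already know $\depth_R C\ge d$, and that is exactly the thing at issue. So none of this reduces the problem; it merely restates it.

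The genuine gap is the one you flag yourself: showing that there is no $M\in\x$ with $\depth_R M=t<d$. You sketch killing a maximal $M$-regular sequence and chasing $k\hookrightarrow M/\underline{x}M$ through the resolving axioms to land $\Omega^t k$ (or a module of finite positive projective dimension) in $\x$, but you do not carry this out, and this extraction is not routine: a resolving subcategory is only closed under kernels of epimorphisms, so going from ``$\Omega^n k\in\x$, $M\in\x$ of depth $t$'' to ``$\Omega^t k\in\x$'' requires a real construction. This is precisely what the paper invokes as \cite[Proposition~4.2]{DT1}, which yields $\Omega^t k\in\res(M\oplus\Omega^n k)\subseteq\x$ directly. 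With that in hand, Proposition~\ref{t2} gives $d\le t$, contradicting $t<d$, and the lemma is immediate — no use of Lemma~\ref{l1} or of the Cohen--Macaulayness of $\Omega^n k$ is needed. As it stands, your proposal identifies the right strategy for the last step but leaves its technical heart unproved, so it does not constitute a complete proof.
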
	
\begin{proof}
Assume contrarily that $\x\nsubseteq\cm(R)$. Hence there exists an $R$-module $M\in\x$ such that $t=\depth_R(M)<d$.
By \cite[Proposition 4.2]{DT1}, $\Omega^tk\in\res(M\oplus\Omega^nk)\subseteq\x$.
It follows from this and Proposition \ref{t2} that $d\leq t$ which is a contradiction.
\end{proof}	
The following lemma plays a crucial role in the proof of our main results.
\begin{lem}\label{l6}
	Let $(R,\fm,k)$ be a local ring and let $\x$ be a resolving subcategory of $\md(R)$. Assume that $\Ext^i_R(\x,R)\neq0$ for some $i>0$. Then there exists an $R$-module $M\in\x\cap\md_0(R)$ such that $\Ext^i_R(M,R)\neq0$.
\end{lem}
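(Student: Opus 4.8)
The plan is to start from the hypothesis $\Ext^i_R(X,R)\neq0$, pick a witness $N\in\x$ with $\Ext^i_R(N,R)\neq0$, and then ``shrink'' $N$ to a module that is locally free on the punctured spectrum while keeping the nonvanishing $\Ext$. The natural tool is Lemma \ref{t7} together with the syzygy/transpose machinery of Proposition \ref{p1}, which lets me replace $N$ by a high syzygy whose transpose detects the relevant cohomology. First I would fix a prime $\fp\neq\fm$ and observe that $\x$ restricted to the punctured spectrum is well understood: either $N_\fp$ is already free, in which case we are localizing away a non-maximal component of $\Sing$, or we can pass to a syzygy. The key reduction is: among the (finitely many, by Noetherianity) primes in $\Supp(\Ext^i_R(N,R))\setminus\{\fm\}$, I want to ``kill'' each one without destroying the support at $\fm$.

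The main mechanism I expect to use is the following. Since $\x$ is resolving, all syzygies $\Omega^jN$ lie in $\x$, and $\Ext^i_R(N,R)\cong\Ext^{i+j}_R(\Omega^jN,R)$ up to a shift; more usefully, via Proposition \ref{p1}(ii)--(iii) one relates $\Ext^i_R(N,R)$ to $\Tr\Omega^{i-1}N$ and its dual, and $\Tr\Omega^{i-1}N$ also lies in (the resolving closure inside) $\x$ once we note $\x$ is closed under transposes of its syzygies up to projectives — here I would invoke that a high syzygy of any module is a high syzygy module, hence by Lemma \ref{t7} (applied where the Gorenstein dimension is finite, e.g. on $\XX^{i-2}$ if $R$ is ``nice enough'') it is torsion-free, and torsion-freeness is what forces the module to become locally free in codimension controlled by $i$. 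The second idea is a pushout/extension trick: given $N\in\x$ with a bad prime $\fp$, I would build a short exact sequence $0\to N'\to N''\to N\to0$ inside $\x$ (using that $\x$ is closed under extensions and kernels of epimorphisms) with $N'_\fp$ free but $\Ext^i_R(N'',R)$ still nonzero at $\fm$; iterating over the finitely many bad primes produces the desired $M\in\x\cap\md_0(R)$.

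Concretely, the steps in order are: (1) choose $N\in\x$ and $i>0$ with $\Ext^i_R(N,R)\neq0$, and let $\mathcal{S}=\Supp\Ext^i_R(N,R)$, a closed set; if $\mathcal{S}=\{\fm\}$ we are done, so assume there is $\fp\in\mathcal{S}$, $\fp\neq\fm$. (2) By replacing $N$ with a syzygy $\Omega^jN\in\x$ (which only shifts the index $i$, harmlessly since $i>0$ is arbitrary here — actually we must be careful to track that the nonvanishing persists, which it does because $\Ext^{i}_R(N,R)\cong\Ext^{i+1}_R(\Omega N,R)$ for $i\ge1$), arrange that $N$ is a sufficiently high syzygy module so that $N$ is $n$-torsion-free for the relevant $n$ and hence $N_\fq$ is free for all $\fq$ of small depth — this handles primes in $\XX^{m}(R)$ automatically. (3) For the remaining finitely many bad primes $\fp_1,\dots,\fp_r$ (necessarily of larger depth), localize and use that $N_{\fp_\ell}$ has positive $\Ext$ against $R_{\fp_\ell}$; take a short exact sequence $0\to N\to E\to N\to0$ or a well-chosen extension realizing an element that trivializes $\Ext^i$ locally at $\fp_\ell$ — equivalently, pull back along a surjection onto a module supported only at those primes — staying inside $\x$ by the resolving axioms. (4) Iterate to remove all non-maximal primes from the support, yielding $M\in\x\cap\md_0(R)$ with $\Ext^i_R(M,R)\neq0$.

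The hard part will be Step (3): producing the extension or modification that eliminates a bad prime from $\Supp\Ext^i_R(-,R)$ while staying inside the resolving subcategory $\x$ \emph{and} not collapsing the support at $\fm$. The clean way is probably to avoid ad hoc extensions entirely and instead argue: if every $M\in\x\cap\md_0(R)$ had $\Ext^i_R(M,R)=0$, then since any $N\in\x$ admits, after enough syzygies, a filtration or approximation by modules in $\md_0(R)$ coming from $\x$ (because locally on the punctured spectrum high syzygies are free, so $N$ "differs" from a module in $\md_0(R)$ only by finite-length-supported data controlled by the finitely many singular primes $\neq\fm$), one deduces $\Ext^i_R(N,R)$ is annihilated by a power of $\fm$ and moreover vanishes — contradiction. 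Making "differs by finite-length data" precise, via dévissage along the finitely many primes in $\Sing(R)\setminus\{\fm\}$ using that $\x$ is closed under the operations in the definition of resolving, is the real content; everything else is bookkeeping with Proposition \ref{p1} and Lemma \ref{t7}.
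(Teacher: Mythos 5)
Your plan correctly identifies the overall shape of the argument (shrink the bad locus while staying in $\x$ and keeping $\Ext^i\neq0$), and you even gesture at a pushout trick, but there is a genuine gap at exactly the step you flag as the hard one. The paper's mechanism is a specific construction from Takahashi's \cite[Proposition 4.2]{T1}: given a prime $\fp\in\NF(M)\setminus\{\fm\}$, pick $x\in\fm\setminus\fp$ and form the pushout of the syzygy sequence $0\to\Omega M\to R^n\to M\to0$ along multiplication by $x$ on $\Omega M$. This yields $M_1$ together with a chain map $(x,\ast,\mathrm{id}):$ the first row to the second row; consequently $M_1\in\res(M)\subseteq\x$, the nonfree locus drops ($\NF(M_1)\subsetneq\NF(M)$, since $M_1$ becomes free at every prime in $\D(x)$), and the induced diagram on $\Ext$ shows that if $\Ext^i_R(M_1,R)=0$ then multiplication by $x$ is surjective on $\Ext^i_R(M,R)$, so Nakayama kills it — contradiction. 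Noetherian induction on $\NF(\cdot)$ then finishes. None of your candidate replacements for this step actually works: (a) passing to high syzygies and invoking Lemma \ref{t7} requires finite local Gorenstein dimension on $\XX^{i-2}(R)$, which is not part of the hypotheses here, so high syzygies of $N$ need not be locally free off $\fm$; (b) tracking $\Supp\Ext^i_R(N,R)$ instead of $\NF(N)$ is the wrong invariant — it is the nonfree locus that is a closed set decreasing under the construction, and $\Supp\Ext^i$ need not behave so well; and (c) the final ``dévissage'' claim that $\Ext^i_R(N,R)$ being $\fm$-power-torsion forces it to vanish is simply false (Ext modules with finite-length support at $\fm$ are generically nonzero, which is the whole point). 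What you are missing is the explicit pushout-along-$x$ construction together with the Nakayama step; once that is in hand, the rest (Noetherian descent on closed nonfree loci) is the bookkeeping you already described.
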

\begin{proof}
	Assume that	$\Ext^i_R(M,R)\neq0$ for some $M\in\x$.
	If $\NF(M)=\V(\fm)$, then we have nothing to prove. So assume that $\fp\in\NF(M)$ for some $\fp\in\Spec R\setminus\{\fm\}$ and that $x$ is an element in $\fm$ which is not in $\fp$. By \cite[Proposition 4.2]{T1}, there is a commutative diagram:
	$$\begin{CD}
	0@>>> \Omega M@>>> R^n @>>> M @>>>0  \\
	&& @VV{x}V@VVV@VV{\parallel}V \\
	0@>>> \Omega M@>>> M_1@>>> M@>>>0\\
	\end{CD}$$
	where $M_1$ is an $R$-module such that $\V(\fm)\subseteq\NF(M_1)\subseteq\NF(M)$ and $\D(x)\cap\NF(M_1)=\emptyset$. Hence $\V(\fm)\subseteq\NF(M_1)\subsetneq\NF(M)$.
	It follows from the above diagram that $M_1\in\res(M)\subseteq\x$. The above diagram induces the following commutative diagram:
	$$\begin{CD}
	\Ext^{i-1}_R(\Omega M,R)@>>>\Ext^i_R(M,R) @>>>\Ext^i_R(M_1,R)  \\
	@VV{x}V@VV{\parallel}V@VVV \\
	\Ext^{i-1}_R(\Omega M,R)@>>>\Ext^i_R(M,R)@>>>0\\
	\end{CD}$$
	If $\Ext^i_R(M_1,R)=0$, then it follows from the above diagram and the Nakayama's Lemma that $\Ext^i_R(M,R)=0$ which is a contradiction. Therefore $\Ext^i_R(M_1,R)$ is a nonzero $R$-module.
	If $\V(\fm)$ coincides with $\NF(M_1)$, then we are done. So we assume that $\V(\fm)$ is strictly contained in $\NF(M_1)$. Then,
	a similar argument to the above shows that there exists an $R$-module $M_2\in\x$ which satisfies $$\V(\fm)\subseteq\NF(M_2)\subsetneq\NF(M_1)\subsetneq\NF(M),$$
	and $\Ext^i_R(M_2,R)\neq0$. By \cite[Corollary 2.11(1)]{T1}, all nonfree loci are closed subsets of $\Spec R$.
	Since $\Spec R$ is a Noetherian space, every descending chain of closed subsets stabilizes.
	This means that the above procedure to construct modules $M_i$ cannot be iterate infinitely many times. Hence there exists an $R$-module $N\in\x$ such that $\NF(N)=\V(\fm)$ and $\Ext^i_R(N,R)\neq0$.
\end{proof}	
\begin{thm}\label{t3}
Let $(R,\fm,k)$ be a local ring and let $\x$ be a resolving subcategory
of $\md(R)$. Assume that $\rad(\x)<\infty$	and that $\x^*\subseteq\x$. Then the following statements hold:
\begin{enumerate}[(i)]
\item{If $\dim R\geq 3$, then $\x\subseteq\g(R)$.}
\item{If $R$ is Cohen--Macaulay, then $\x\subseteq\cm(R)$.}
\end{enumerate}
\end{thm}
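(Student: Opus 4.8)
The plan is to reduce both parts to the single assertion that $\Ext_R^i(\x,R)=0$ for all $i>0$; call this $(\star)$. \emph{Assume $(\star)$ and fix $M\in\x$.} Since $\x^{*}\subseteq\x$ we have $M^{*}\in\x$, so $\Ext_R^{>0}(M,R)=\Ext_R^{>0}(M^{*},R)=0$, and it suffices to show $M$ is reflexive. In the standard Auslander--Bridger exact sequence $0\to\Ext_R^{1}(\Tr X,R)\to X\to X^{**}\to\Ext_R^{2}(\Tr X,R)\to0$ (see \cite{AB}) put $X=\Tr M$; as $\Tr\Tr M\approx M$, the outer terms are $\Ext_R^{1}(M,R)$ and $\Ext_R^{2}(M,R)$ up to projective summands, hence vanish by $(\star)$, so $\Tr M$ is reflexive. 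By Proposition \ref{p1}(iii), $(\Tr M)^{*}\approx\Omega^{2}M\in\x$, and since $\x$ is closed under $R$-duals, finite direct sums and direct summands we get $\Tr M\cong(\Tr M)^{**}\approx(\Omega^{2}M)^{*}\in\x$; applying $(\star)$ to $\Tr M$ now gives $\Ext_R^{1}(\Tr M,R)=\Ext_R^{2}(\Tr M,R)=0$, that is, $M\cong M^{**}$. Hence $M\in\g(R)$, which proves (i); and since over a Cohen--Macaulay ring every totally reflexive module is maximal Cohen--Macaulay, it also proves (ii). Note that no hypothesis on $\dim R$ was used so far.

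So everything rests on establishing $(\star)$. Suppose it fails. By Lemma \ref{l6} there is $M\in\x\cap\md_0(R)$ with $\Ext_R^{i}(M,R)\neq0$ for some $i>0$, and since $M$ is free on the punctured spectrum this $\Ext$-module has finite length; replacing $M$ by $\Omega^{i-1}M\in\x\cap\md_0(R)$, we may assume $E:=\Ext_R^{1}(M,R)$ is a nonzero module of finite length. The decisive step --- where $\rad(\x)<\infty$ enters --- is to push a suitably small module, built from $E$, $M$, their $R$-duals and their syzygies, back into $\x$: for (i) (where we may assume $\depth R\ge3$) one aims to produce a \emph{nonzero finite-length module $L\in\x$}, and for (ii) (with $R$ Cohen--Macaulay) a \emph{syzygy $\Omega^{j}_Rk$ of the residue field in $\x$}. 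In the first case we are then done: since $L$ has finite length and $\depth R\ge3$ one has $\Ext_R^{1}(L,R)=\Ext_R^{2}(L,R)=0$, so the Auslander--Bridger sequence makes $\Tr L$ reflexive and hence $\Tr L\cong(\Tr L)^{**}\approx(\Omega^{2}L)^{*}\in\x$ (Proposition \ref{p1}(iii)); but also $L^{*}=\Hom_R(L,R)=0$ because $\depth R>0$, so $\Omega^{2}\Tr L\approx L^{*}=0$, i.e.\ $\pd_R\Tr L\le2$, while $\Tr L$ is nonzero and non-free --- so $\x$ contains a module of finite positive projective dimension, whence $\rad(\x)=\infty$ by Lemma \ref{t}, a contradiction. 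In the second case Lemma \ref{l2} at once gives $\x\subseteq\cm(R)$, finishing the proof. The production of $L$ (resp.\ of $\Omega^{j}_Rk$) is carried out using the exact sequences $0\to M^{*}\to P_0^{*}\to(\Omega M)^{*}\to E\to0$ and $0\to E\to\Tr M\to\Omega\Tr\Omega M\to0$ of Proposition \ref{p1}: because $E$ has finite length, on dualizing these sequences the terms $\Hom_R(E,R)$ and $\Ext_R^{1}(E,R)$ vanish (using $\depth R\ge2$), so that the cokernels and transposes occurring are --- up to $R$-duals and syzygies of modules already in $\x$ --- again in $\x$, and tracking this carefully one extracts the desired module.

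The genuine obstacle is precisely this construction. A resolving subcategory closed under $R$-duals is closed under syzygies, extensions, kernels of epimorphisms, finite direct sums and direct summands, but not a priori under cokernels of monomorphisms, cosyzygies, or transposes; so at each step one must certify --- by reflexivity when available, otherwise by exploiting that the finite length of $E$ together with a lower bound on $\depth R$ makes the relevant $\Ext$-modules vanish, turning dualized sequences back into kernel-of-epimorphism sequences --- that the module being built stays in $\x$. One should expect to need an induction on $\ell(E)$ (in the spirit of the proof of Proposition \ref{t2}) and an appeal to Lemma \ref{t7} to pass from ``$n$-torsion free'' to ``$n$-th syzygy'' at the primes where membership in $\md_0(R)$ is available; one must also verify that the low-dimensional Cohen--Macaulay cases, and (for (i)) the reduction permitting the assumption $\depth R\ge3$, cause no difficulty.
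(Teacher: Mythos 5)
Your reduction to the claim $(\star)$ is fine, and your argument that $(\star)$ forces $\x\subseteq\g(R)$ is a correct variant of the paper's (the paper uses a snake-lemma chase; you use the Auslander--Bridger exact sequence --- both work). But the proof has a genuine, and acknowledged, gap at the step where $(\star)$ fails: you never actually produce the nonzero finite-length module $L\in\x$ (for (i)) nor the syzygy $\Omega^j k\in\x$ (for (ii)). The sketch via the two sequences from Proposition \ref{p1} does not go through as described: from $0\to M^*\to P_0^*\to(\Omega M)^*\to E\to0$, closedness under duals gives $M^*,(\Omega M)^*\in\x$, but $E$ is a cokernel, and resolving subcategories are not closed under cokernels; breaking the sequence at the image yields $\Omega J\in\x$ for the middle image $J$, not $J\in\x$, so you cannot conclude $E\in\x$ nor that any module built from $E$ by duals lies in $\x$. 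You flag this yourself as ``the genuine obstacle,'' and it is precisely the obstacle.

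The paper's device for getting around it is a socle-element trick you don't have: since $\Ext^1_R(M,R)$ has finite length (after Lemma \ref{l6}), pick $0\neq\sigma\in\soc(\Ext^1_R(M,R))$, giving a non-split $0\to R\to N\to M\to 0$; dualizing this particular extension yields $0\to M^*\to N^*\to R\to k\to 0$ with cokernel exactly $k$ (because $\sigma$ is killed by $\fm$), and since $M^*,N^*\in\x$ one obtains $\Omega^2 k\in\x$, contradicting Proposition \ref{t2} for $\dim R\ge 3$. Your plan lacks the feature of forcing the cokernel to be $k$, and it is unclear it can be repaired without essentially reinventing the socle argument. A secondary issue: in (i) you slide from the hypothesis $\dim R\ge 3$ to ``we may assume $\depth R\ge 3$'' without justification; $R$ is not assumed Cohen--Macaulay, so this reduction is not available, and your later vanishing arguments for $\Ext^1_R(L,R)$ and $\Ext^2_R(L,R)$ (which use depth of $R$, not dimension) would fail in the non-Cohen--Macaulay case. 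The paper's contradiction is phrased entirely in terms of $\dim R$ via Proposition \ref{t2}, which sidesteps this.
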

\begin{proof}
(i). First assume that $\Ext^1_R(\x,R)=0$ and that $M\in\x$. As $\x$ is resolving and closed under dual, $\Ext^i_R(M,R)=0=\Ext^i_R(M^*,R)$ for all $i>0$. The exact sequence $0\rightarrow\Omega M\rightarrow F\rightarrow M\rightarrow0$ induces the following commutative diagram with exact rows:
$$\begin{CD}
0@>>> \Omega M@>>> F_0 @>>> M @>>>0  \\
&& @VV{f}V@VV{\|\wr}V@VV{g}V \\
0@>>> (\Omega M)^{**}@>>> F_0^{**}@>>> M^{**}@>>>0\\
\end{CD}
$$
It follows from the above diagram and the snake Lemma that $f$ is a monomorphism and $g$ is an epimorphism. Similarly, the exact sequence $0\rightarrow\Omega^2 M\rightarrow F\rightarrow \Omega M\rightarrow0$ induces the following commutative diagram with exact rows:
$$\begin{CD}
0@>>> \Omega^2 M@>>> F_1 @>>> \Omega M @>>>0  \\
&& @VVV@VV{\|\wr}V@VV{f}V \\
0@>>> (\Omega^2 M)^{**}@>>> F_1^{**}@>>> (\Omega M)^{**}@>>>0\\
\end{CD}
$$
It follows from the above diagram and the snake Lemma that $f$ is  is an epimorphism. Hence $f$ is an isomorphism and so is $g$. In other words, $\x\subseteq\g(R)$. Now assume that $\Ext^1_R(\x,R)\neq0$. By Lemma \ref{l6} there exists an $R$-module $M\in\x\cap\md_0(R)$ with $\Ext^1_R(M,R)\neq0$.
Let $\sigma\in\soc(\Ext^1_R(M,R))$ be a non-zero element. Hence we have the following non-split sequence: $0\rightarrow R\rightarrow N\rightarrow M\rightarrow0$ which induces the following exact sequence:
\begin{equation}\tag{\ref{t3}.1}
0\rightarrow M^*\rightarrow N^*\rightarrow R\rightarrow k\rightarrow0.
\end{equation}
As $\x$ is closed under dual, $M^*$ and $N^*$ are belongs to $\x$. It follows from the exact sequence (\ref{t3}.1) and \cite[Lemma 4.3]{T} that
$\Omega^2k\in\x$ which is a contradiction by Proposition \ref{t2}.
	
(ii) Assume contrarily that $\x\nsubseteq\cm(R)$. Hence $\Omega^nk\notin\x$ for all $n\geq0$, by Lemma \ref{l2}. By Auslander-Bridger formula $\x\nsubseteq\g(R)$. Hence, by the similar argument in the proof of first part, one can show that $\Omega^2k\in\x$ which is a contradiction.
\end{proof}
\begin{prop}\label{p2}
Let $R$ be a local ring, and $\x$ a resolving subcategory of $\md R$.
Then $\x$ has infinite radius if $\x$ contains a module $M$ such that:\par
(i) $\Ext^1_R(M,R)\ne0$,\quad
(ii) $\Omega M$ is reflexive,\quad
(iii) $\Omega^{-2}\Omega M$ is in $\x$.
\end{prop}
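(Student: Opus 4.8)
The plan is to exhibit inside $\x$ a non-free module of finite projective dimension; Lemma \ref{t} then forces $\rad(\x)=\infty$. First I would show $\Omega^{-1}\Omega M\in\x$. Since $\x$ is resolving, $\Omega M\in\x$, and being a syzygy it is torsionless, so $0\to\Omega M\to P_{-1}\to\Omega^{-1}\Omega M\to0$ is exact. Reflexivity of $\Omega M$ gives $\coker(\Omega M\to(\Omega M)^{**})=0$, i.e. $\Ext^2_R(\Tr\Omega M,R)=0$, equivalently $\Ext^1_R(\Omega\Tr\Omega M,R)=0$. By Lemma \ref{le}, $\Omega^{-1}\Omega M\approx\Tr(\Omega\Tr\Omega M)$, and since a transpose $\Tr Z$ is torsionless exactly when $\Ext^1_R(Z,R)=0$, the module $\Omega^{-1}\Omega M$ is torsionless, so $0\to\Omega^{-1}\Omega M\to P_{-2}\to\Omega^{-2}\Omega M\to0$ is exact too. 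Schanuel's lemma then yields $\Omega^{-1}\Omega M\approx\Omega(\Omega^{-2}\Omega M)$, which lies in $\x$ by (iii). (One also records $\Omega(\Omega^{-1}\Omega M)\approx\Omega M$, hence $\Omega^i(\Omega^{-1}\Omega M)\approx\Omega^iM$ for $i\ge1$.)

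Next I would dualize the sequence $0\to\Ext^1_R(M,R)\to\Tr M\to\Omega\Tr\Omega M\to0$ of Proposition \ref{p1}(ii) ($n=1$) using the transpose long exact sequence of Proposition \ref{p1}(i), together with $\Tr\Tr M\approx M$ and $\Tr(\Omega\Tr\Omega M)\approx\Omega^{-1}\Omega M$ (Lemma \ref{le}). Writing $E=\Ext^1_R(M,R)$ and $C=\Omega\Tr\Omega M$, this gives an exact sequence $C^*\to(\Tr M)^*\to E^*\to\Omega^{-1}\Omega M\to M\to\Tr E\to0$. The key point is that $\Ext^1_R(C,R)=\Ext^2_R(\Tr\Omega M,R)=0$ by reflexivity of $\Omega M$; hence $(\Tr M)^*\to E^*$ is surjective, the map $E^*\to\Omega^{-1}\Omega M$ vanishes, and we are left with a genuine short exact sequence $0\to\Omega^{-1}\Omega M\to M\to\Tr E\to0$ (up to projective summands). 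Note $\Tr E\ne0$: otherwise $M\approx\Omega^{-1}\Omega M$, which forces $\Ext^1_R(M,R)=0$ against (i). Applying $\Omega$ (Proposition \ref{p1}(i)) to this sequence gives $0\to\Omega\Tr E\to\Omega^{-1}\Omega M\oplus(\text{free})\to M\to0$; since $\Omega^{-1}\Omega M$ and $M$ lie in $\x$ and $\x$ is closed under free summands and kernels of epimorphisms, $\Omega\Tr E\in\x$, and iterating, $\Omega^i\Tr E\in\x$ for all $i\ge1$.

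To finish I would pass, via Lemma \ref{l6} (legitimate because $\Ext^1_R(\x,R)\ne0$), to the case $M\in\md_0(R)$, so that $E$ has finite length; then $E^*=0$ when $\depth R>0$, hence $\Omega^2\Tr E\approx E^*$ is free, i.e. $\pd_R\Tr E\le2$. As $\Tr E$ is nonzero and not free, either $\pd_R\Tr E=2$, and then $\Omega\Tr E\in\x$ is non-free of projective dimension $1$, so Lemma \ref{t} gives $\rad(\x)=\infty$; or $\pd_R\Tr E=1$, which forces $\pd_RE\le1$ and hence $\depth R=\pd_RE\le1$. The hard part will be (a) making the reduction to finite length rigorous: the module produced by Lemma \ref{l6} has first syzygy only an extension of $\Omega M$ by a second syzygy — ``reflexive up to finite length'' — so one must either check that the construction of \cite[Proposition 4.2]{T1} can be carried out while preserving (ii) and (iii), or rerun the earlier steps after localizing and show the finite-length defect of reflexivity is harmless; and (b) disposing of the low-depth cases $\depth R\le1$, notably $R$ Artinian (where $\md_0(R)=\md R$, so Lemma \ref{l6} is empty): there the hypotheses force $R$ to be non-Gorenstein, and I would use the non-split extension $0\to R\to N'\to M\to0$ attached to a nonzero socle element of $\Ext^1_R(M,R)$ — which belongs to $\x$ by closure under extensions — together with $\Omega\Tr E\in\x$ and Proposition \ref{t2}, to force $\rad(\x)=\infty$.
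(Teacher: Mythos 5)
Your construction of a short exact sequence via transpose duality is a genuinely different (and in spirit elegant) route, and in fact it produces \emph{the same} module as the paper's proof: you arrive at $\Tr E$ where $E=\Ext^1_R(M,R)$, and the paper's module $X$ satisfies $\Tr X\cong\Ext^1_R(X,R)\cong\Ext^1_R(M,R)=E$ (because $\pd_R X\le1$), so $X\approx\Tr E$. Your first paragraph is correct, and so is the derivation of $0\to\Omega^{-1}\Omega M\to M\to\Tr E\to0$ (up to projectives), including the crucial observation that $\Ext^1_R(\Omega\Tr\Omega M,R)=\Ext^2_R(\Tr\Omega M,R)=0$ by reflexivity of $\Omega M$.

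The genuine gap, which you yourself flag, lies in how you try to show that $\Tr E$ has finite projective dimension. Your route (force $E^*=0$ via $M\in\md_0(R)$ and $\depth R>0$, then use $E^*\approx\Omega^2\Tr E$) requires (a) an upgrade of Lemma~\ref{l6} so that the replacement module still satisfies hypotheses (ii) and (iii) of the statement — and this is not automatic: the $M_i$ produced by \cite[Proposition 4.2]{T1} are extensions whose syzygies need not be reflexive, and nothing guarantees $\Omega^{-2}\Omega M_i\in\x$ — and (b) a separate treatment of $\depth R\le1$, which you only sketch. Also, in the branch $\pd_R\Tr E=1$ you never get a non-free finite-projective-dimension module inside $\x$ ($\Omega\Tr E$ is then free, and $\Tr E$ itself need not lie in $\x$); the deduction $\pd_R\Tr E=1\Rightarrow\pd_R E\le1$ is not justified either.

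What the paper does is to obtain the finiteness of $\pd$ for free from the construction, rather than a posteriori. Starting from $0\to\Omega M\to F\to M\to0$ and the cosyzygy sequences $0\to\Omega M\to F_1\to\Omega^{-1}\Omega M\to0$, $0\to\Omega^{-1}\Omega M\to F'\to\Omega^{-2}\Omega M\to0$ (the first using that $\Omega M$ is a syzygy, the second using that reflexivity of $\Omega M$ makes $\Omega^{-1}\Omega M$ torsionless), two successive pushouts yield both $0\to F_1\to F\oplus F'\to X\to0$ and $0\to M\to X\to\Omega^{-2}\Omega M\to0$. The first sequence immediately gives $\pd_R X\le1$; the second gives $X\in\x$ by closure under extensions, and, using $\Ext_R^1(\Omega^{-2}\Omega M,R)=\Ext_R^2(\Omega^{-2}\Omega M,R)=0$, gives $\Ext^1_R(X,R)\cong\Ext^1_R(M,R)\ne0$, so $X$ is not projective. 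Lemma~\ref{t} then applies directly — no reduction to $\md_0(R)$, no depth casework. Keeping the free resolution visible in the construction (rather than dualising to $\Tr E$ and trying to bound $\pd_R\Tr E$ from the outside via $E^*$) is what closes the gaps you ran into.
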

\begin{proof}
By the similar argument to \cite[Theorem 4.10 (1)]{DT}, we have the following exact sequence:
\begin{equation}\tag{\ref{p2}.1}
0\rightarrow M\rightarrow X\rightarrow\Omega^{-2}\Omega M\rightarrow0,
\end{equation}
where $\pd_R(X)\leq1$.
It follows from the above exact sequence that $X\in\x$. Note that $\Ext^i_R(\Omega^{-2}\Omega M,R)=0$ for $i=1, 2$. Therefore, the exact sequence (\ref{p2}.1) induces the following isomorphism $\Ext^1_R(X,R)\cong\Ext^1_R(M,R)\neq0$. In other words, $\x$ contains a non-free $R$-module of finite projective dimension. Now the assertion follows from Lemma \ref{t}.
\end{proof}
\begin{rmk}
The above proposition recovers \cite[Theorem 3.3(2)]{DT}.
\end{rmk}
Let $I$ be an ideal of $R$ and $M$ an $R$-module.
We define the {\em Loewy length} of $M$ with respect to $I$ by $\ell\ell_I(M)=\inf\{n\ge0\mid I^nM=0\}$.
The following result is a generalization of \cite[Proposition 4.3]{DT}.
\begin{lem}\label{l7}
Let $I$ be an ideal of $R$.
Let $\x$ be a resolving subcategory of $\md R$ having finite radius.
Then $\sup_{M\in\x}\{\ell\ell_I(\Gamma_I(M))\}<\infty$.
\end{lem}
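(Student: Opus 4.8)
The plan is to extract the case $t=0$ of Lemma \ref{l1}, which bounds exactly the annihilator $\fa_I^0(M)=\ann_R(\Gamma_I(M))$ of the $I$-torsion submodule, and then to convert this annihilator bound into a bound on a power of $I$, using that a finitely generated $I$-torsion module over the Noetherian ring $R$ is killed by some power of $I$.

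First I would put $n=\rad(\x)$, which is finite by hypothesis, and fix an $R$-module $C$ with $\x\subseteq[C]_{n+1}$ (possible by the definition of the radius, as in the proofs of Proposition \ref{t2} and Lemma \ref{l2}). Then for every $M\in\x$ one has $M\in[C]_{n+1}$, so Lemma \ref{l1} applied with $t=0$ yields
$$
J:=\bigl(\fa_I^0(R)\,\fa_I^0(C)\bigr)^{n+1}\subseteq\fa_I^0(M)=\ann_R(\Gamma_I(M)).
$$
The crucial point is that the ideal $J$ on the left \emph{does not depend on} $M$: it is a single fixed ideal, built only from $R$ and the ambient module $C$, that annihilates $\Gamma_I(M)$ for all $M\in\x$ at once.

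Next I would observe that $\Gamma_I(R)$ and $\Gamma_I(C)$ are finitely generated (as $R$ is Noetherian and $C$ is finitely generated) and $I$-torsion, so there is an integer $s\ge1$ with $I^s\Gamma_I(R)=0=I^s\Gamma_I(C)$, that is, $I^s\subseteq\fa_I^0(R)\cap\fa_I^0(C)$. Hence $I^{2s}\subseteq\fa_I^0(R)\,\fa_I^0(C)$ and therefore $I^{2s(n+1)}\subseteq J$. Combining this with the displayed inclusion gives $I^{2s(n+1)}\Gamma_I(M)=0$, i.e. $\ell\ell_I(\Gamma_I(M))\le 2s(n+1)$, for every $M\in\x$; thus $\sup_{M\in\x}\{\ell\ell_I(\Gamma_I(M))\}\le 2s(n+1)<\infty$.

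I do not anticipate a genuine obstacle: the whole argument is a direct reading of the $t=0$ instance of Lemma \ref{l1}. The only thing requiring (trivial) care is the passage between the ideals $\fa_I^0(-)$ and powers of $I$, together with the uniformity in $M$, both of which are immediate once one notes that the bounding ideal $J$ is manufactured solely from $R$ and the single module $C$.
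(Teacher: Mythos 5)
Your proof is correct and follows essentially the same route as the paper's: apply Lemma \ref{l1} at cohomological degree $t=0$, then translate the resulting uniform annihilator bound into a power of $I$ using that $\Gamma_I(R)$ and $\Gamma_I(C)$ are finitely generated $I$-torsion modules. The only cosmetic differences are your choice of a single exponent $s$ versus the paper's separate $r,c$ and a shift between $\rad(\x)$ and the exponent on the ball.
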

\begin{proof}
By assumption there are an $R$-module $C$ and an integer $n>0$ such that $\x\subseteq[C]_n$.
Choose integers $r,c>0$ such that $I^r,I^c$ annihilate $\Gamma_I(R),\Gamma_I(C)$ respectively.
Fix any module $M\in\x$.
Applying Lemma \ref{l1}, the ideal $\fa_I^0(M)$ contains $(\fa_I^0(R)\fa_I^0(C))^n$, which contains $(I^rI^c)^n=I^{(r+c)n}$.
Hence $\sup_{M\in\x}\{\ell\ell_I(\Gamma_I(M))\}\le(r+c)n$.
\end{proof}
\begin{prop}
Let $R$ be a local ring.
Let $\x$ be a resolving subcategory of $\md R$.
Suppose that there exist an $R$-module $M$ and an $M$-sequence $\underline x=x_1,\dots,x_n$ such that $M/\underline xM$ is in $\x$.
Then $\x$ has infinite radius.
\end{prop}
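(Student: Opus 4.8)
The plan is to obtain a contradiction with Lemma~\ref{l7} by exhibiting infinitely many modules in $\x$ whose Loewy lengths with respect to a single fixed ideal are unbounded. First I would reduce to the case $n=1$: replacing $M$ by $M/(x_1,\dots,x_{n-1})M$ and $\underline x$ by the single element $x:=x_n$ changes neither the quotient $M/\underline xM$ nor the hypothesis that it lies in $\x$, and by the definition of an $M$-sequence $x_n$ is a nonzerodivisor on $M/(x_1,\dots,x_{n-1})M$. Thus we may assume that $x\in R$ is a nonzerodivisor on $M$ with $N:=M/xM$ in $\x$; since $N\neq0$ and $R$ is local, this forces $M\neq0$ and $x\in\fm$.

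The key step is to prove that $M/x^{\ell}M\in\x$ for every $\ell\ge1$, by induction on $\ell$, the case $\ell=1$ being the hypothesis. For the inductive step, multiplication by $x$ gives an exact sequence
$$0\to M/x^{\ell}M\xrightarrow{x}M/x^{\ell+1}M\to M/xM\to0,$$
where injectivity on the left is precisely where the regularity of $x$ on $M$ (not on $M/x^{\ell}M$) is used: if $xw\in x^{\ell+1}M$ then $w\in x^{\ell}M$ because $x$ is a nonzerodivisor on $M$. Since $\x$ is resolving it is closed under extensions, so $M/x^{\ell}M\in\x$ together with $M/xM\in\x$ yields $M/x^{\ell+1}M\in\x$.

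It then remains to note that $\ell\ell_{(x)}(M/x^{\ell}M)=\ell$. Indeed $(x)^{j}=(x^{j})$ annihilates $M/x^{\ell}M$ exactly when $x^{j}M\subseteq x^{\ell}M$; for $j<\ell$ this would give $x^{j}M=x^{\ell}M$, hence $x^{j}M=x\cdot(x^{j}M)$, and so $x^{j}M=0$ by Nakayama's lemma, contradicting that $x^{j}$ is a nonzerodivisor on $M\neq0$. Since each $M/x^{\ell}M$ is $(x)$-torsion, $\Gamma_{(x)}(M/x^{\ell}M)=M/x^{\ell}M$, whence $\sup_{Y\in\x}\ell\ell_{(x)}(\Gamma_{(x)}(Y))=\infty$. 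By Lemma~\ref{l7} this is impossible once $\rad\x<\infty$, so $\rad\x=\infty$.

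I expect the only genuinely new point to be the bootstrapping in the second paragraph, namely that closure under extensions alone propagates $M/xM\in\x$ to $M/x^{\ell}M\in\x$ for all $\ell$; the remaining computations are routine. One might instead hope to locate a nonfree module of finite projective dimension inside $\x$ and then quote Lemma~\ref{t}, but in general $\x$ need contain no such module, so the Loewy-length argument through Lemma~\ref{l7} appears to be the right route.
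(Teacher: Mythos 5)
Your proof is correct and follows essentially the same strategy as the paper's: both arguments show that $M/I^iM\in\x$ for all $i>0$ (where $I=(\underline x)$) and then invoke Lemma~\ref{l7} to derive a contradiction from the unbounded Loewy lengths. The only difference is technical — the paper establishes $M/I^iM\in\x$ by using the associated graded module $\Gr_I(M)\cong (M/IM)[X_1,\dots,X_n]$ to see that each $I^{i-1}M/I^iM$ is a direct sum of copies of $M/IM$, whereas you reduce to a single regular element $x$ and run a direct induction with the extension $0\to M/x^\ell M\xrightarrow{x}M/x^{\ell+1}M\to M/xM\to0$; both are fine, and you close with Nakayama where the paper invokes the Krull intersection theorem, which amounts to the same thing.
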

\begin{proof}
Let $I$ be the ideal of $R$ generated by $\underline x$.
Then $IM\ne M$, so $I$ is contained in $\fm$.
Since $\underline x$ is $M$-regular, there is a natural isomorphism $\Gr_I(M)\cong M/IM[X_1,\dots,X_n]$; see \cite[Theorem 1.1.8]{BH}.
This implies that for all $i>0$ the module $I^{i-1}M/I^iM$ is a direct sum of copies of $M/IM$.
Since $M/IM$ is in $\x$, so is $I^{i-1}M/I^iM$, and so is $M/I^iM$.
Suppose that $\x$ has finite radius.
Then Lemma \ref{l2} implies that $\ell:=\sup_{M\in\x}\{\ell\ell_I(\Gamma_I(M))\}$ is finite.
Hence $I^\ell$ annihilates $\Gamma_I(M/I^iM)=M/I^iM$, which means that $I^\ell M$ is contained in $I^iM$.
Thus $I^\ell M$ is contained in $\bigcap_{i>0}I^iM$, which is zero by Krull's intersection theorem.
In particular, we have $x_1^\ell M=0$.
This contradicts the fact that $x_1^\ell$ is a non-zerodivisor on $M$.
\end{proof}
Following \cite{NT}, we say that an ideal $I$ of $R$ is {\em quasi-decomposable} if $I$ contains an $R$-sequence $\underline x=x_1,\dots,x_n$ such that the module $I/\underline xR$ is decomposable.
A Cohen--Macaulay non-Gorenstein local ring with minimal multiplicity has quasi-decomposable maximal ideal.
We denote by $\Pd(R)$ the subcategory
of $\md(R)$ consisting of modules of finite projective dimension.
\begin{prop}\label{p3}
Let $(R,\fm,k)$	be a Cohen--Macaulay local ring with quasi-decomposable maximal ideal. Let $\x$ be a resolving subcategory of $\md(R)$ with $\rad(\x)<\infty$.
Then $\x\subseteq\cm(R)$.
\end{prop}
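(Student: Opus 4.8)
The plan is to argue by contradiction and to use Lemma~\ref{l2} to reduce the whole problem to locating a syzygy of the residue field inside $\x$.

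First I would set up the contradiction. Suppose $\x\nsubseteq\cm(R)$ and fix $M\in\x$ with $\depth_RM<\dim R$. A nonzero free module over a Cohen--Macaulay ring is maximal Cohen--Macaulay, so $M$ is not free; hence if $\pd_RM$ were finite we would have $0<\pd_RM<\infty$, and Lemma~\ref{t} would already give $\rad(\x)=\infty$, contrary to hypothesis. So $\pd_RM=\infty$. The goal then becomes: produce an integer $j\ge0$ with $\Omega_R^jk\in\x$. Indeed, once such a $j$ is in hand, $\rad(\x)<\infty$ and Lemma~\ref{l2} force $\x\subseteq\cm(R)$, contradicting $M\in\x$; so $\x\subseteq\cm(R)$ after all. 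Since $\x$ is resolving it contains $\res_R(M)$, so it suffices to find $\Omega_R^jk\in\res_R(M)$.

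The hard part is this last step, and it is the only place where quasi-decomposability is used. Fix an $R$-sequence $\underline x=x_1,\dots,x_n\in\fm$ with $\fm/\underline xR=\bar I\oplus\bar J$ for nonzero ideals $\bar I,\bar J$ of $\bar R:=R/\underline xR$. Then $\bar R$ is a non-trivial fibre product $\bar R\cong(\bar R/\bar I)\times_k(\bar R/\bar J)$, whose two factors are local rings that are not fields, with $\Omega_{\bar R}(\bar R/\bar I)=\bar I$, $\Omega_{\bar R}(\bar R/\bar J)=\bar J$, and $\bar I\oplus\bar J=\fm/\underline xR=\Omega_{\bar R}(k)$. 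I would then use the description of minimal free resolutions over such a ring (classical for fibre products over a field, and available in the form needed here from \cite{NT}): the first syzygy of any $\bar R$-module splits as a direct sum of a free module, a module annihilated by $\bar I$, and a module annihilated by $\bar J$, compatibly with iterating $\Omega_{\bar R}$; in particular one syzygy of a nonzero module annihilated by $\bar I$ has $\bar I=\Omega_{\bar R}(\bar R/\bar I)$ as a direct summand, and symmetrically with $\bar J$. Since $M$ has infinite projective dimension, some iterated syzygy of $M$ acquires a nonzero summand annihilated by $\bar I$ or by $\bar J$; one further syzygy then exhibits $\bar I$ (respectively $\bar J$) as a direct summand, and the next one exhibits $\bar J$ (respectively $\bar I$). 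As $\bar I\oplus\bar J=\Omega_{\bar R}(k)$, a syzygy of $k$ is thus a direct summand of some syzygy of $M$. The same bookkeeping can be carried out over $R$ rather than $\bar R$ — this is possible because $\pd_R\bar R=n$ is finite, so the relevant direct-sum decompositions of syzygies and the infinitude of $\pd_RM$ survive the descent along $\underline x$ — and it yields $\Omega_R^jk$ as a direct summand of $\Omega_R^iM$ for suitable $i,j$. Hence $\Omega_R^jk\in\res_R(M)\subseteq\x$, which by the previous paragraph completes the argument.

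I expect the genuine obstacle to be the middle paragraph: extracting from the minimal-resolution structure over the fibre product $(\bar R/\bar I)\times_k(\bar R/\bar J)$ that the summands $\bar I$ and $\bar J$ — whose direct sum is a syzygy of $k$ — are forced to appear as direct summands of syzygies of a module of infinite projective dimension, together with the bookkeeping needed to run this over $R$ along the regular sequence $\underline x$. This is precisely the structural information about quasi-decomposable maximal ideals supplied by \cite{NT}; the rest is formal and rests only on Lemmas~\ref{t} and~\ref{l2}.
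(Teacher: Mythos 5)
Your overall strategy coincides with the paper's: use the quasi-decomposable structure of $\fm$ to force a syzygy of $k$ into $\x$, then invoke Lemma~\ref{l2}, with Lemma~\ref{t} disposing of the finite-projective-dimension fallback. Where you diverge is in how the syzygy of $k$ is located. The paper does not unwind the fibre-product structure at all; it simply sets $\y=\x\cap\cm(R)$, observes that this is a resolving subcategory of $\cm(R)$, and cites \cite[Lemma 4.4]{NT} as a black box: if $\y\neq\add(R)$ then $\Omega^dk\in\y\subseteq\x$, while if $\y=\add(R)$ then $\x\subseteq\Pd(R)$ (every $M\in\x$ has $\Omega^dM\in\y=\add(R)$), and Lemma~\ref{t} forces $\x=\add(R)$. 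You instead try to re-derive the content of that cited lemma, and it is there that the gaps lie. In particular: you work with an arbitrary $M\in\x$ of infinite projective dimension rather than first replacing it by an MCM syzygy, so the chosen $R$-sequence $\underline x$ need not be $M$-regular, and the reduction $M\mapsto M/\underline xM$ does not interact cleanly with $\Omega_R$; the claimed splitting of first syzygies over the fibre product $\bar R=(\bar R/\bar I)\times_k(\bar R/\bar J)$ into free, $\bar I$-torsion, and $\bar J$-torsion parts is not stated in a form that directly yields $\bar I$ and then $\bar J$ as successive summands; and the sentence asserting that ``the same bookkeeping can be carried out over $R$'' because $\pd_R\bar R<\infty$ elides the genuinely delicate ascent of direct-summand information along the regular sequence (idempotent lifting for $\End$ rings of modules over a non-complete local ring is not automatic). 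You acknowledge these are the technical heart supplied by \cite{NT}, which is fair, but as written your middle paragraph is a sketch of the proof of \cite[Lemma 4.4]{NT} with several unjustified steps, rather than an application of it. The paper's route — pass to the MCM part $\y$ first, then cite the NT lemma for resolving subcategories of $\cm(R)$ — avoids all of these issues and is the cleaner way to organise the argument.
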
	
\begin{proof}
Set $\mathcal{Y}=\x\cap\cm(R)$. Note that $\mathcal{Y}$ is a resolving subcategory contained in $\cm(R)$. If $\mathcal{Y}\neq\add(R)$, then by \cite[Lemma 4.4]{NT}, $\Omega^dk\in\x$ and so the assertion is clear by Lemma \ref{l2}.
Now assume that $\mathcal{Y}=\add(R)$. Hence $\x\subseteq\Pd(R)$.
As $\rad(\x)<\infty$, $\x=\add(R)\subseteq\cm(R)$ by Lemma \ref{t}.
\end{proof}
Let $R$ be a Cohen--Macaulay local ring of dimension $d$. Assume that $I$ is an $\fm$-primary ideal of $R$ and that
$\Gr_I(R)=\underset{n\geq0}{\oplus} I^n/I^{n+1}$ is the associated graded ring of $I$. Let $\fa(\Gr_I(R))$ denote the $\fa$-invariant of $\Gr_I(R)$ \cite[Definition 3.1.4]{GW}. Following \cite{GOTWY}, we say that $I$ is an \emph{Ulrich ideal} of $R$ if $\Gr_I(R)$ is a Cohen--Macaulay ring with $\fa(\Gr_I(R))\leq 1-d$ and $I/I^2$ is a free $R/I$-module. For each $R$-module $M\in\CM(R)$, we have $\mu(M)\leq\e(M)$, where $\mu(M)$ denotes the number of minimal generators of $M$ and $\e(M)$ is the multiplicity of $M$ with respect to the maximal ideal.
A maximal Cohen--Macaulay $R$-module $M$ is called \emph{Ulrich module} if $\mu(M)=\e(M)$ (i.e. $M$ has the maximum number of generators with respect to the above inequality).

\begin{prop}
Let $(R,\fm,k)$ be a Cohen--Macaulay local ring and let $\x$ be a resolving subcategory of $\md(R)$.
\begin{enumerate}[\rm(1)]
\item
If $\rad(\x)<\infty$ and $\x$ contains at least one Ulrich module, then $\x\subseteq\cm(R)$.
\item
If $R/I\in\x$ for some Ulrich ideal $I$, then $\rad(\x)=\infty$.
\end{enumerate}
\end{prop}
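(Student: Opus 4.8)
The plan is to play both parts off against the two obstructions to finite radius already established: Lemma~\ref{l2} (over a Cohen--Macaulay ring, a resolving subcategory of finite radius that contains some syzygy of $k$ is contained in $\cm(R)$) and Lemma~\ref{t} (a resolving subcategory containing a non-free module of finite projective dimension has infinite radius). We may assume $d:=\dim R>0$: for (1) this is harmless since $\cm(R)=\md(R)$ when $d=0$, and the Artinian case of (2) is dealt with separately. After the faithfully flat base change to $R(t)=R[t]_{\fm R[t]}$, which changes neither $\rad(\x)$ nor any of the membership relations in play, we may also assume the residue field is infinite, so that $\fm$ and every $\fm$-primary ideal admits a minimal reduction generated by $d$ elements.

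For (1), let $M\in\x$ be a nonzero Ulrich module and choose a minimal reduction $\underline x=x_1,\dots,x_d$ of $\fm$. Since $M$ is maximal Cohen--Macaulay, $\underline x$ is an $M$-regular sequence; since $M$ is Ulrich, $\fm M=\underline xM$, whence $M/\underline xM=M/\fm M\cong k^{\mu(M)}$. The Koszul complex $K_\bullet(\underline x;M)$, whose term in homological degree $i$ is $M^{\binom{d}{i}}$, is acyclic in positive degrees and so yields an exact sequence
$$0\to M\to M^{\binom{d}{d-1}}\to\cdots\to M^{\binom{d}{1}}\to M\to k^{\mu(M)}\to 0$$
all of whose terms but the last lie in $\add M$. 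Breaking this into short exact sequences and applying Proposition~\ref{p1}(i) repeatedly, while using that $\res_R(M)$ is closed under syzygies and under kernels of epimorphisms, one proves by induction on $d$ that $\Omega^d_R(k^{\mu(M)})=(\Omega^d_R k)^{\mu(M)}$ lies in $\res_R(M)$; closure under direct summands then gives $\Omega^d_R k\in\res_R(M)\subseteq\x$. Since $\rad(\x)<\infty$, Lemma~\ref{l2} forces $\x\subseteq\cm(R)$, as asserted. (The containment $\Omega^d_R k\in\res_R(M)$ for an Ulrich module $M$ can alternatively be extracted from \cite{GOTWY}.)

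For (2), suppose $R/I\in\x$ with $I$ an Ulrich ideal, and let $Q=(x_1,\dots,x_d)\subseteq I$ be a minimal reduction, generated by a maximal $R$-regular sequence. By the structure theory of Ulrich ideals \cite{GOTWY}, $I^2=QI$ and $I/Q$ is a free $R/I$-module, say $I/Q\cong(R/I)^{n}$; in particular $I/Q$ is a finite direct sum of copies of $R/I\in\x$, hence $I/Q\in\x$. The exact sequence $0\to I/Q\to R/Q\to R/I\to 0$ and closure of $\x$ under extensions now give $R/Q\in\x$. But the Koszul complex on $x_1,\dots,x_d$ is a minimal free resolution of $R/Q$, so $\pd_R(R/Q)=d$, which is finite and positive; thus $\x$ contains a non-free module of finite projective dimension, and Lemma~\ref{t} gives $\rad(\x)=\infty$.

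The substance of the argument is the Koszul step in (1): it converts the ``wrong-way'' surjection $M\twoheadrightarrow k^{\mu(M)}$ into information about syzygies of $k$, which is exactly where the resolving hypothesis can be brought to bear, and the only delicate point is keeping track of the free summands introduced at each application of Proposition~\ref{p1}(i) throughout the induction. Part~(2), by contrast, is essentially immediate once the exact sequence $0\to(R/I)^{n}\to R/Q\to R/I\to 0$ has been written down and the defining properties of Ulrich ideals are invoked; only the degenerate case $\dim R=0$, where $Q=0$, calls for a separate elementary discussion.
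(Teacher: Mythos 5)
Your proof is correct and follows the same overall strategy as the paper's: in part (1) the Ulrich condition $\fm M=\underline{x}M$ is used to put $\Omega^d k$ into $\x$, and Lemma~\ref{l2} finishes; in part (2) a parameter ideal $Q\subseteq I$ with $I/Q$ free over $R/I$ produces a module in $\x$ of finite positive projective dimension, and Lemma~\ref{t} finishes. In part (1) the paper iterates the multiplication sequences $0\to M_i\xrightarrow{x_{i+1}}M_i\to M_{i+1}\to 0$ rather than quoting the Koszul complex outright, but the bookkeeping amounts to the same thing; your explicit base change to $R(t)$ to guarantee a minimal reduction generated by $d$ elements tidies up something the paper leaves implicit. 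The more interesting divergence is in part (2): the paper feeds $Q=\Omega(R/Q)$ into Lemma~\ref{t}, and since $\pd_R(Q)=d-1$ this only gives a contradiction when $d\ge2$, forcing a separate Loewy-length argument via Lemma~\ref{l7} for $d=1$. You instead feed $R/Q$, with $\pd_R(R/Q)=d\ge1$, into Lemma~\ref{t} via the extension $0\to I/Q\to R/Q\to R/I\to 0$, which handles $d=1$ and $d\ge2$ uniformly and eliminates the paper's case split --- a small but genuine streamlining. Both your write-up and the paper's are silent about $\dim R=0$ (you flag it but do not dispatch it); that case is best excluded by convention, as the notion of Ulrich ideal is designed for positive dimension.
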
	
\begin{proof}
(1) Assume that $M\in\x$ is an Ulrich module. Then there exists a parameter ideal $\underline{x}$ of $M$ such that $\fm M=\underline{x}M$. Therefore $M/\underline{x}M\cong\overset{n}{\oplus}k$ for some $n>0$. As $M$ is Cohen--Macaulay, $\underline{x}$ is an $M$-regular sequence.
Set $t=\depth_R(M)$ and $M_i=M/(x_1,\cdots,x_i)M$ for $0\leq i\leq t$. The exact sequence $0\rightarrow M_i\overset{x_{i+1}}{\rightarrow} M_i\rightarrow M_{i+1}\rightarrow0$, induces the following exact sequence
$0\rightarrow\Omega^{i}M_i\rightarrow \Omega^iM_i\oplus F_i\rightarrow\Omega^i M_{i+1}\rightarrow0$, where $F_i$ is a free $R$-module.
Hence we get the following exact sequence
$$0\rightarrow\Omega^{i+1}M_{i+1}\rightarrow \Omega^iM_i\oplus F'_i\rightarrow\Omega^i M_{i}\oplus F_i\rightarrow0,$$
where $F'_i$ is a free $R$-module. Now by induction on $t$, one can show that $\Omega^t(M/\underline{x}M)\in\x$. Therefore
$\Omega^tk\in\x$. Now the assertion follows from Lemma \ref{l2}.

(2) Assume contrarily that $\rad(\x)<\infty$.
If $\dim R=1$, then $I^2=aI$ for some regular element $a$ by \cite[Remark 3.1.6]{GW}. Therefore, $I\cong I^n$ for all $n>0$. Also
$$I^i/I^{i+1}\cong a^{i-1}I/a^{i-1}I^2\cong I/I^2\cong\oplus R/I\in\x.$$
It follows from the exact sequence $0\rightarrow I^i/I^{i+1}\rightarrow R/I^{i+1}\rightarrow R/I^{i}\rightarrow0$ that $R/I^{i}\in\x$ for all $i>0$. Hence, by Lemma \ref{l7}, there exists a positive integer $n$ such that $\fm^n\subseteq I^i$ for all $i>0$. In other words, $\fm^n=0$ which is a contradiction.
Assume that $d\geq2$. By \cite[Lemma 2.3]{GOTWY} and \cite[Remark 3.1.6]{GW}, there exists a parameter ideal $Q\subseteq I$ such that $I/Q$ is a free $R/I$-module. Hence $I/Q\in\x$. It follows from the exact sequence
$0\rightarrow Q\rightarrow I\rightarrow I/Q\rightarrow0$ that
$Q\in\x$. Note that $\pd_R(Q)=\pd_R(R/Q)-1=d-1>0$ which is a contradiction by Lemma \ref{t}.
\end{proof}
\begin{prop}\label{pr1}
Let $R$ be a local ring and $\x$ a resolving subcategory of $\md(R)$.
Assume that $\x$ contains an eventually periodic module of finite and positive G-dimension.
Then $\rad(\x)=\infty$.
\end{prop}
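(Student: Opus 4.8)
The plan is to verify the three hypotheses of Proposition~\ref{p2} for a suitable syzygy of the given module. So fix an eventually periodic module $M\in\x$ with $g:=\gd_R(M)$ finite; since $\gd_R(M)>0$ we have $g\ge1$, and in particular $M$ is not totally reflexive. Put $N:=\Omega^gM$ and $M':=\Omega^{g-1}M$. As $\x$ is resolving, it is closed under syzygies, so $N$ and $M'$ both lie in $\x$. Because $\gd_R(M)=g<\infty$, the $g$-th syzygy $N=\Omega^gM$ is totally reflexive \cite{AB}; hence $\Omega M'=N$ is reflexive, which is hypothesis (ii) of Proposition~\ref{p2}. Hypothesis (i) is also immediate: dimension shifting along a minimal free resolution gives $\Ext^1_R(M',R)=\Ext^1_R(\Omega^{g-1}M,R)\cong\Ext^g_R(M,R)$, and this is nonzero because $\gd_R(M)=g$, by Proposition~\ref{G}(iii).

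The only remaining point is hypothesis (iii), namely that $\Omega^{-2}(\Omega M')=\Omega^{-2}N$ belongs to $\x$; this is where the periodicity assumption has to be used, since a resolving subcategory need not be closed under cosyzygies. The first observation is that $N=\Omega^gM$ is again eventually periodic, being a syzygy of the eventually periodic module $M$. The crucial step is then the following fact, which I would isolate as a lemma: \emph{an eventually periodic totally reflexive module is periodic}. I would prove it by descending induction on the index $n_0$ at which periodicity begins: given a stable isomorphism $\Omega^{n_0+p}N\approx\Omega^{n_0}N$ with $n_0\ge1$, apply $\Omega^{-1}$ to both sides and use the cancellation $\Omega^{-1}\Omega S\approx S$, valid for any totally reflexive $S$, to descend to $\Omega^{(n_0-1)+p}N\approx\Omega^{n_0-1}N$; the base case $n_0=0$ says precisely that $N$ is periodic. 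The cancellation identity itself comes from Lemma~\ref{le} and Proposition~\ref{p1}(ii): since $\Ext^1_R(S,R)=0$ one gets $\Omega\,\Tr\,\Omega S\approx\Tr S$, whence $\Omega^{-1}\Omega S\approx\Tr\,\Omega\,\Tr\,\Omega S\approx\Tr\,\Tr S\approx S$ (this is also standard, cf.\ \cite{AB}). Granting periodicity, $\Omega^pN\approx N$ for some $p\ge1$, so $\Omega^{kp}N\approx N$ for all $k$; applying $\Omega^{-1}$ twice --- legitimate because every $\Omega^iN$ is totally reflexive --- yields $\Omega^{-2}N\approx\Omega^{kp-2}N$ for $kp\ge2$, a genuine syzygy of $N$, which therefore lies in $\x$.

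With (i)--(iii) in hand for $M'\in\x$, Proposition~\ref{p2} gives $\rad(\x)=\infty$. The main obstacle is the lemma that eventually periodic totally reflexive modules are periodic, together with its input $\Omega^{-1}\Omega S\approx S$; the rest is routine bookkeeping with syzygies, transposes and $\Ext$. (If one prefers to separate cases, when $\pd_R(M)<\infty$ one simply has $0<g=\pd_R(M)<\infty$ and Lemma~\ref{t} applies at once; but the argument above also covers this case, since then $N=\Omega^gM$ is a nonzero free module and $\Omega^{-2}N=0\in\x$.)
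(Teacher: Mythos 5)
Your proof is correct and takes essentially the same approach as the paper's: the paper verifies directly, via a chain of stable isomorphisms using $\Omega^{-1}\Omega\approx\mathrm{id}$ on totally reflexive modules, that $\Omega^{-2}\Omega^m M\cong\Omega^{t-2}\Omega^m M\in\x$, and then invokes \cite[Theorem 4.10]{DT}, which is the source of Proposition~\ref{p2}. Your ``eventually periodic totally reflexive $\Rightarrow$ periodic'' lemma is exactly the content packaged in the paper's chain $\Omega^{-2}\Omega^mM\cong\Omega^{-2}\Omega^{m-n}\Omega^{n+t}M\cong\cdots\cong\Omega^{t-2}\Omega^mM$, so the two arguments coincide up to organization.
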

\begin{proof}
Let $M\in\x$ be an $R$-module as in the proposition, and set $m=\gd M$.
Then $\Omega^nM\cong\Omega^{n+t}M$ for some $n\ge m$ and $t>0$.
Without loss of generality, we may assume that $t>1$. Hence we have the following isomorphisms:
\begin{multline}\tag{\ref{pr1}.1}
\Omega^{-2}\Omega^mM
\cong\Omega^{-2}(\Omega^{m-n}(\Omega^nM))
\cong\Omega^{-2}\Omega^{m-n}(\Omega^{n+t}M)\\
\cong\Omega^{-2}\Omega^{m-n}\Omega^{t}(\Omega^nM)
\cong\Omega^{t-2}\Omega^{m-n}(\Omega^{n}M)
\cong\Omega^{t-2}(\Omega^{m}M)\in\x.
\end{multline}
Now the assertion follows from (\ref{pr1}.1) and \cite[Theorem 4.10]{DT}.
\end{proof}
The following is a direct consequence of Proposition \ref{pr1},
\cite[Theorem 1.6]{Av} and \cite[Theorem 1.2]{GP}.
\begin{cor}\label{c1}
Let $R$ be a Gorenstein local ring and let $\x$ be a resolving subcategory of $\md(R)$. Assume that $\x$ contains a module with bounded Betti numbers which is not maximal Cohen--Macaulay. Then $\x$ has infinite radius if either $\cod R\leq4$ or $\e(R)\leq11$.
\end{cor}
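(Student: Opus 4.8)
The plan is to reduce the statement to Proposition~\ref{pr1}. So suppose that $\x$ contains a module $M$ with bounded Betti numbers that is not maximal Cohen--Macaulay. Because $R$ is Gorenstein, Proposition~\ref{G}(iv) gives $\gd_R M<\infty$, and because $M\notin\cm(R)$ we have $\depth_R M<\dim R=\depth R$; hence the Auslander--Bridger formula (Proposition~\ref{G}(ii)) yields $\gd_R M=\depth R-\depth_R M>0$. Thus $M$ automatically has finite and positive $G$-dimension, and the only thing still to be checked is that $M$ is eventually periodic.

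First I would handle the case $\pd_R M<\infty$ separately. Since a free module over the Cohen--Macaulay ring $R$ is maximal Cohen--Macaulay, $M$ cannot be free, so $0<\pd_R M<\infty$; then $\x$ is a resolving subcategory containing a module of finite positive projective dimension, and $\rad(\x)=\infty$ by Lemma~\ref{t}. Hence I may assume $\pd_R M=\infty$, i.e.\ $M$ has an infinite minimal free resolution whose Betti numbers are bounded. This is precisely the hypothesis of the cited periodicity theorems: if $\cod R\le4$, then \cite[Theorem~1.6]{Av} shows $M$ is eventually periodic; if instead $\e(R)\le11$, the same conclusion follows from \cite[Theorem~1.2]{GP}. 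In either case $\Omega^nM\cong\Omega^{n+t}M$ for some $n\ge0$ and $t>0$.

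Putting the two cases together, when $\pd_R M=\infty$ the module $M$ is an eventually periodic module of finite and positive $G$-dimension belonging to $\x$, so Proposition~\ref{pr1} gives $\rad(\x)=\infty$; and when $\pd_R M<\infty$ this was already shown. I do not expect a genuine obstacle here: the whole argument is an assembly of Proposition~\ref{pr1}, the Auslander--Bridger formula and Lemma~\ref{t}, and the only real external ingredient is the periodicity of minimal free resolutions of modules with bounded Betti numbers over low-codimension or low-multiplicity Gorenstein rings --- exactly the content imported through \cite{Av} and \cite{GP}, and the sole place where the numerical hypotheses on $R$ are used. The one small point to get right is the separate treatment of the finite projective dimension case, since Proposition~\ref{pr1} is really meant for modules whose resolution is genuinely periodic.
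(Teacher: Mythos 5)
Your proof is correct and follows the same route the paper intends: the corollary is stated as a direct consequence of Proposition~\ref{pr1} together with the periodicity theorems of Avramov and Gasharov--Peeva, and you simply fill in the details (positivity and finiteness of $G$-dimension via Proposition~\ref{G}(ii),(iv), plus the separate, easy treatment of the finite-projective-dimension case via Lemma~\ref{t}, which is a sensible precaution since the cited periodicity results concern modules of infinite projective dimension).
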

\section{Closedness under cosyzygies}
In this section, we study resolving subcategories which are closed under cosyzygies. We prove our second main result, Theorem \ref{tt}.
\begin{lem}\label{l3}
Let $\x$ be a resolving subcategory of $\md(R)$. The following are equivalent:
\begin{enumerate}[(i)]
	\item{$\Omega^{-1}\x\subseteq\x\subseteq\Omega(\md(R))$.}
	\item{$\Omega\x=\x$.}
\end{enumerate}	
\end{lem}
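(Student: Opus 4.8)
The plan is to prove the two implications separately. I will use throughout that a finitely generated module belongs to $\Omega(\md R)$ precisely when it is \emph{torsionless} (i.e.\ its canonical map to the bidual is injective), that resolving subcategories are closed under syzygies and under stable isomorphism, and the elementary operations on syzygies, cosyzygies and transposes recalled in Section 2 (in particular Lemma \ref{le} and Proposition \ref{p1}). For (i)$\Rightarrow$(ii): the inclusion $\Omega\x\subseteq\x$ holds automatically because $\x$ is resolving. For the reverse inclusion, take $M\in\x$; since $\x\subseteq\Omega(\md R)$, the module $M$ is torsionless, so a left $\add R$-approximation $M\to P_{-1}$ is injective and gives a short exact sequence $0\to M\to P_{-1}\to\Omega^{-1}M\to0$, whence $M\approx\Omega(\Omega^{-1}M)$. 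As $\Omega^{-1}M\in\Omega^{-1}\x\subseteq\x$, this realizes $M$ (up to stable isomorphism) as a syzygy of a module of $\x$, so $M\in\Omega\x$.

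For (ii)$\Rightarrow$(i), first note $\x\subseteq\Omega(\md R)$: the equality $\Omega\x=\x$ forces each $M\in\x$ to be stably isomorphic to some $\Omega N$ with $N\in\x$, and since syzygies are torsionless and torsionlessness passes to direct summands and to stably isomorphic modules, $M$ is torsionless. The substantive part is $\Omega^{-1}\x\subseteq\x$. Fix $M\in\x$ and pick $N_0\in\x$ with $M\approx\Omega N_0$. The crucial observation is that $\Omega^{-1}\Omega N\approx N$ as soon as $\Ext^1_R(N,R)=0$: applying $\Hom_R(-,R^m)$ to a short exact sequence $0\to\Omega N\xrightarrow{\iota}F\to N\to0$ with $F$ free shows that in this case every homomorphism $\Omega N\to R^m$ factors through $\iota$, so $\iota$ is a left $\add R$-approximation of $\Omega N$ and $\Omega^{-1}\Omega N\approx\coker\iota\cong N$. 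Therefore it is enough to replace $N_0$ by a module $N\in\x$ with $\Omega N\approx M$ and $\Ext^1_R(N,R)=0$. This is done by a universal extension: choose a generating set $e_1,\dots,e_t$ of the finitely generated $R$-module $\Ext^1_R(N_0,R)$ and let $0\to R^t\to N\to N_0\to0$ be the extension classified by $(e_1,\dots,e_t)\in\Ext^1_R(N_0,R)^t=\Ext^1_R(N_0,R^t)$. Then $N\in\x$ because $\x$ is closed under extensions; $\Omega N\approx\Omega N_0\approx M$ by Proposition \ref{p1}(i) (the free submodule $R^t$ contributes nothing to the syzygy); and the connecting homomorphism $\Hom_R(R^t,R)\to\Ext^1_R(N_0,R)$ sends the $i$-th coordinate functional to $e_i$, hence is surjective, which forces $\Ext^1_R(N,R)=0$. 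Combining, $\Omega^{-1}M\approx\Omega^{-1}\Omega N\approx N\in\x$, so $\Omega^{-1}M\in\x$, as required.

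The only genuinely non-formal step is this last one. The naive guess $\Omega^{-1}M\approx N_0$ is false in general, since $\Omega^{-1}\Omega N_0\not\approx N_0$ once $\Ext^1_R(N_0,R)\neq0$; the universal-extension correction (or, equivalently, the same computation carried out through Lemma \ref{le} and the stable identity $\Tr\Omega\approx\Omega^{-1}\Tr$) is precisely what fixes this, and it is here that closedness of $\x$ under extensions is used in an essential way. Everything else reduces to routine bookkeeping with syzygies, cosyzygies and approximations.
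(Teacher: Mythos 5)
Your proof is correct, and in overall shape it coincides with the paper's: the implication (i)$\Rightarrow$(ii) is verbatim the same, and for (ii)$\Rightarrow$(i) both arguments reduce to showing that, for $M\approx\Omega N_0$ with $N_0\in\x$, the module $\Omega^{-1}\Omega N_0$ lies in $\x$. The one genuine divergence is how this is established. The paper simply quotes \cite[Proposition 2.21]{AB} to obtain an exact sequence $0\to Q\to\Omega^{-1}\Omega N_0\to N_0\to 0$ with $Q$ projective, and then invokes closure of $\x$ under extensions. You instead reconstruct this from scratch: you form the universal extension $0\to R^t\to N\to N_0\to0$ classified by a generating set of $\Ext^1_R(N_0,R)$, check that $\Ext^1_R(N,R)=0$ and $\Omega N\approx\Omega N_0\approx M$, and observe that $\Omega^{-1}\Omega N\approx N$ once $\Ext^1_R(N,R)=0$ (the left $\add R$-approximation of $\Omega N$ is then the inclusion into a free cover of $N$). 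Your $N$ is stably isomorphic to $\Omega^{-1}\Omega N_0$, and your universal-extension sequence is exactly the Auslander--Bridger sequence, so the two routes produce the same object; what you buy is a self-contained proof of precisely the instance of \cite[Proposition 2.21]{AB} that is used. Both proofs also correctly note that $\x\subseteq\Omega(\md R)$ is immediate from $\Omega\x=\x$.
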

\begin{proof}
(i)$\Rightarrow$(ii).
As $\x$ is resolving, $\Omega\x\subseteq\x$. On the other hand, for all $M\in\x$, there exists an exact sequence $0\rightarrow M\rightarrow P\rightarrow\Omega^{-1}M\rightarrow0$, where $P$ is projective. As $\Omega^{-1}M\in\x$, $M\cong\Omega(\Omega^{-1}M)\in\Omega\x.$ Hence $\Omega\x=\x$.

(ii)$\Rightarrow$(i). For all $R$-module $M\in\x$, there exists an $R$-module $N\in\x$ such that $M\cong\Omega N$.
By \cite[Proposition 2.21]{AB}, there exists an exact sequence
$0\rightarrow Q\rightarrow\Omega^{-1}\Omega N\rightarrow N\rightarrow0$,
where $Q$ is projective. As $\x$ is resolving, it follows from the above exact sequence that $\Omega^{-1}M\cong\Omega^{-1}\Omega N\in\x$. Hence
$\Omega^{-1}\x\subseteq\x$.
\end{proof}
\begin{lem}\label{l4}
Let $\x$ be a subcategory of $\md(R)$ such that the following conditions hold:
\begin{enumerate}[(i)]
\item{$\x\subseteq\Omega(\md(R))$}
\item{$\x$ is closed under $\Omega^{-1}$ and kernel of epimorphisms.}
\item{$\x\cap\Pd(R)=\add(R)$.}
\end{enumerate}
Then $\x\subseteq\g(R)$.
\end{lem}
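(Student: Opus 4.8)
The plan is to show that every $M\in\x$ is totally reflexive (i.e.\ lies in $\g(R)$) by equipping it with a complete resolution.

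First I would record the closure properties $\x$ actually enjoys. By (iii), $\add R\subseteq\x$, so applying (ii) to a short exact sequence $0\to\Omega M\to F\to M\to0$ with $F$ free shows that $\x$ is closed under syzygies; hence $\x$ is closed under $\Omega$ and under $\Omega^{-1}$, and by (i) every module in $\x$ is torsionless. Now fix $M\in\x$. Since each $\Omega^{-i}M\in\x$ is torsionless, the left $\add R$-approximations defining its cosyzygies are injective, so splicing the short exact sequences $0\to\Omega^{-i}M\to Q_i\to\Omega^{-i-1}M\to0$, with each $Q_i$ projective, together with a projective resolution of $M$, yields an acyclic complex $\mathbf F$ of finitely generated projectives, infinite in both directions, with $M\cong\coker(F_1\to F_0)$ and with all of its syzygy and cosyzygy modules among the $\Omega^nM$ $(n\in\ZZ)$, hence in $\x$. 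By the standard criterion, $M$ is totally reflexive as soon as $\Hom_R(\mathbf F,R)$ is acyclic, and dimension shifting through the defining exact sequences of $\mathbf F$ reduces this to $\Ext^1_R(\Omega^nM,R)=0$ for every $n\in\ZZ$.

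The part with $n<0$ is automatic: by Lemma \ref{le}, $\Omega^{-1}K\approx\Tr\Omega\Tr K$ for every $K$, while $\Ext^1_R(\Tr L,R)=\ker(L\to L^{**})$, which vanishes for $L=\Omega\Tr K$ because $\Omega\Tr K$ is a first syzygy; so $\Ext^1_R(\Omega^{-m}M,R)=0$ for all $m\ge1$. Moreover, iterating Lemma \ref{le} gives $\Omega^{-n}M\approx\Tr\Omega^n\Tr M\in\x$, which is torsionless, so applying the same standard sequence to $\Tr\Omega^n\Tr M$ gives $\Ext^1_R(\Omega^n\Tr M,R)=\ker(\Omega^{-n}M\to(\Omega^{-n}M)^{**})=0$, i.e.\ $\Ext^{n+1}_R(\Tr M,R)=0$, for every $n\ge0$. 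Thus $\Ext^i_R(\Tr M,R)=0$ for all $i\ge1$; in particular $M$ is reflexive, and also $\Ext^i_R(M^*,R)=0$ for all $i\ge1$ since $M^*\approx\Omega^2\Tr M$ by Proposition \ref{p1}. Consequently $M$ lies in $\g(R)$ if and only if $\Ext^i_R(M,R)=0$ for all $i\ge1$, which is the only thing left to prove.

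Establishing $\Ext^i_R(M,R)=0$ $(i\ge1)$ is the main obstacle, and the only place where hypothesis (iii) can enter — parts (i) and (ii) alone certainly do not force it. I would argue by contradiction. If $\Ext^j_R(M,R)\neq0$ for some $j\ge1$, then, replacing $M$ by $\Omega^{j-1}M\in\x$, we may assume $\Ext^1_R(M,R)\neq0$. A nonzero class in $\Ext^1_R(M,R)$ produces the non-split short exact sequence $0\to Q\to\Omega^{-1}\Omega M\to M\to0$ of \cite[Proposition 2.21]{AB} with $Q$ projective and $\Omega^{-1}\Omega M\in\x$. Dualizing this sequence, and using that $\Ext^1_R(\Omega^{-1}\Omega M,R)=0$ together with the reflexivity of the modules in $\x$ and the closure of $\x$ under kernels of epimorphisms, one should be able to read off from the resulting finite free presentations a non-free module of finite projective dimension lying in $\x$, contradicting (iii). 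Pinning down that offending module — presumably some syzygy of a dual of a module in $\x$ assembled from these exact sequences — is the delicate point, and the step on which I would concentrate.
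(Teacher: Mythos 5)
Your reduction is fine up to the last step, and it essentially reproduces the first half of the paper's proof: from (i) and (ii), every cosyzygy of $M\in\x$ is again a torsionless module in $\x$, so one can splice the cosyzygy sequences onto a projective resolution; the resulting $\infty$-torsion-freeness gives $\Ext_R^i(\Tr M,R)=0$ for all $i\ge1$, hence $M$ reflexive and $\Ext_R^i(M^*,R)=0$ for all $i\ge1$. All of that is correct. But the remaining and decisive claim, $\Ext_R^i(M,R)=0$ for all $i\ge1$, is exactly where (iii) has to do real work, and your proposal does not actually close it; you acknowledge as much (``pinning down that offending module \dots is the delicate point''). As written, the argument you sketch cannot succeed: the Auslander--Bridger sequence $0\to Q\to\Omega^{-1}\Omega M\to M\to 0$ you invoke already has $Q$ projective (since $M$ is $1$-torsion free), so all three terms are legitimately in $\x$ and there is no module of finite positive projective dimension to contradict (iii). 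Moreover this canonical sequence is not ``produced by'' a nonzero class in $\Ext^1_R(M,R)$, and it is not automatically nonsplit; a proof by contradiction along those lines has no foothold.

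The paper instead argues directly, not by contradiction, via the following inductive claim: for $N\in\x$, if $\Ext_R^n(N,R)=0$ for some $n>0$ then $\Ext_R^{n+1}(N,R)=0$. One sets $Y=\Omega^{n-1}N$ (so $\Ext^1_R(Y,R)=0$) and, since $\Omega^2Y\in\x$ is $2$-torsion free, invokes \cite[Proposition 2.21]{AB} in the form $0\to Z\to\Omega^{-2}\Omega^2Y\to Y\to 0$ with $\pd_R Z<\infty$. Here is exactly where all three hypotheses combine: $\Omega^{-2}\Omega^2Y\in\x$ by (ii), so $Z\in\x$ by closure under kernels of epimorphisms, and then (iii) forces $Z$ to be projective. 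Since $\Ext^1_R(Y,Z)=0$, the sequence splits, $Y\approx\Omega^{-2}\Omega^2Y$, and a short computation through the sequence $0\to\Omega^{-1}\Omega^2Y\to P\to\Omega^{-2}\Omega^2Y\to0$ gives $\Ext^{n+1}_R(N,R)\cong\Ext^2_R(Y,R)\cong\Ext^1_R(\Omega^{-1}\Omega^2Y,R)=0$. Finally the base case $\Ext^1_R(\Omega^{-1}M,R)=0$ (true because $\Omega^{-1}M\approx\Tr\Omega\Tr M$) starts the induction, and the short exact sequence $0\to M\to Q\to\Omega^{-1}M\to 0$ transfers the vanishing to $\Ext^i_R(M,R)$. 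The point you are missing, then, is that the role of (iii) is not to exhibit a forbidden module but to force the finite-projective-dimension kernel in the $2$-torsion-free Auslander--Bridger sequence to be projective, which is what makes the sequence split and the vanishing ascend.
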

\begin{proof}
By (i), (ii), for all $R$-module $M\in\x$, we obtain the following long exact sequence
\begin{equation}\tag{\ref{l4}.1}
0\longrightarrow M\overset{d_0}{\longrightarrow} Q_0\overset{d_1}{\longrightarrow} Q_1\longrightarrow\cdots
\end{equation}
where $\coker(d_i)=\Omega^{-i-1}M$ for all $i\geq0$ and $Q_i$ is projective for all $i$. As $\Ext^1_R(\Omega^{-i}M,R)=0$ for all $i>0$, the exact sequence (\ref{l4}.1) induces the following exact sequence:
\begin{equation}\tag{\ref{l4}.2}
\cdots\longrightarrow (Q_1)^*\longrightarrow (Q_0)^*\longrightarrow M^*\longrightarrow 0.
\end{equation}
Dualizing the exact sequence (\ref{l4}.2), we get the following commutative diagram
$$\begin{CD}
0@>>> M@>>> Q_0 @>>> Q_1 @>>>\cdots  \\
&& @VVV@VV{\|\wr}V@VV{\|\wr}V \\
0@>>> M^{**}@>>> (Q_0)^{**}@>>> (Q_1)^{**}@>>>\cdots\\
\end{CD}
$$
It follows from the above diagram that $M\cong M^{**}$ and $\Ext^i_R(M^*,R)=0$ for all $i>0$. In other words, $M$ is $n$-torsion free for all $M\in\x$ and $n>0$.
Now it is enough to show that $\Ext^i_R(M,R)=0$ for all $i>0$. First we prove the following claim.

\textbf{Claim}. For every $R$-module $N\in\x$, if $\Ext^n_R(N,R)=0$ for some $n>0$, then $\Ext^{n+1}_R(N,R)=0$.

Set $Y=\Omega^{n-1}N$. Note that $\Omega^2Y\in\x$ and so it is $2$-torsion-free. Hence, by \cite[Proposition 2.21]{AB}, there exists an exact sequence
\begin{equation}\tag{\ref{l4}.3}
\sigma: 0\longrightarrow Z\longrightarrow\Omega^{-2}\Omega^2Y\longrightarrow Y\longrightarrow 0,
\end{equation}
where $\pd_R(Z)$ is finite. By (i) and (ii), $Z\in\x$. Hence by (iii), $Z$ is projective. As $\Ext^1(Y,R)=0$, $\Ext^1_R(Y,Z)=0$. In other words, $\sigma$ is a split exact sequence and so
$Y\approx\Omega^{-2}\Omega^2Y$. Note that $\Omega^{-1}\Omega^2 Y\in\x$ and so it is a first syzygy module by (i). Hence we have the following exact sequence: $0\rightarrow\Omega^{-1}\Omega^{2}Y\rightarrow P\rightarrow\Omega^{-2}\Omega^2Y\rightarrow0$, where $P$ is a projective $R$-module. Therefore we get the following isomorphisms:
\begin{equation}\tag{\ref{l4}.4}
\Ext^{n+1}_R(N,R)
\cong\Ext^2_R(Y,R)
\cong\Ext^2_R(\Omega^{-2}\Omega^{2}Y,R)
\cong\Ext^1_R(\Omega^{-1}\Omega^{2}Y,R)=0,
\end{equation}
which proves our claim. Now let $M\in\x$. By (i), $M$ is a first syzygy module and we have the following exact sequence
$0\rightarrow M\rightarrow Q\rightarrow\Omega^{-1}M\rightarrow0$, where $Q$ is a projective $R$-module. By (ii), $\Omega^{-1}M\in\x$. As $\Ext^1_R(\Omega^{-1}M,R)=0$, $\Ext^i_R(M,R)\cong\Ext^{i+1}_R(\Omega^{-1}M,R)=0$ for all $i>0$ by the claim. Hence $\x\subseteq\g(R)$.
\end{proof}
\begin{thm}\label{t4}
Let $\x$ be a resolving subcategory of $\md(R)$	with $\x=\Omega\x$. Then $\x\subseteq\g(R)$.
\end{thm}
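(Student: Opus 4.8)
The plan is to reduce Theorem \ref{t4} to Lemma \ref{l4}, whose hypotheses (i)--(iii) we must verify for a resolving subcategory $\x$ with $\x=\Omega\x$. Conditions (i) and (ii) will come essentially for free from Lemma \ref{l3}: since $\x=\Omega\x$ implies $\x\subseteq\Omega(\md R)$ (every $M\in\x$ is $\Omega N$ for some $N\in\x$, hence a first syzygy) and $\Omega^{-1}\x\subseteq\x$, we get $\x\subseteq\Omega(\md R)$ and closure of $\x$ under $\Omega^{-1}$; closure under kernels of epimorphisms holds because $\x$ is resolving. So the only real work is condition (iii): $\x\cap\Pd(R)=\add(R)$, i.e. every module in $\x$ of finite projective dimension is projective.

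To prove (iii), let $M\in\x$ with $\pd_R M=:p<\infty$, and suppose toward a contradiction that $p>0$. The idea is to use the relation $\x=\Omega\x$ to ``run the resolution backwards'' and produce a non-projective module of projective dimension exactly $1$ inside $\x$; this will contradict Lemma \ref{t} since such a module has $0<\pd<\infty$. Concretely, since $\x=\Omega\x$, there is $N\in\x$ with $M\approx\Omega N$, and then $N\in\x$ with $\pd_R N=p+1$ (or $N$ projective, which would force $M$ projective too). Iterating, or rather going the other direction: from $M$ with $\pd M=p\ge1$ I would instead pass to $\Omega^{p-1}M\in\x$ (it lies in $\x$ because $\x$ is resolving), which satisfies $\pd_R(\Omega^{p-1}M)=1$ and is non-projective. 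This single module already violates Lemma \ref{t}, forcing $p=0$. Hence $\x\cap\Pd(R)=\add(R)$, condition (iii) holds, and Lemma \ref{l4} gives $\x\subseteq\g(R)$.

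I expect the main (minor) obstacle to be bookkeeping around stable versus genuine isomorphisms and the convention for cosyzygies: one must make sure that $\x=\Omega\x$ really does hand back closure under $\Omega^{-1}$ in the precise form Lemma \ref{l4}(ii) requires, and that a module of finite positive projective dimension genuinely survives inside $\x$ rather than disappearing up to projective summands (here the fact that $\x$ contains all projectives and is closed under summands is what keeps everything honest). Once the hypotheses of Lemma \ref{l4} are checked, the conclusion is immediate, so the theorem is really a short corollary of Lemmas \ref{l3}, \ref{l4} and \ref{t}.
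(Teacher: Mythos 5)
Your reduction to Lemma \ref{l4} via Lemma \ref{l3} is exactly the paper's strategy, and conditions (i) and (ii) are correctly handled. The problem is condition (iii): your argument that $\x\cap\Pd(R)=\add(R)$ is broken. You pass to $\Omega^{p-1}M\in\x$, a non-free module of projective dimension $1$, and then say ``this single module already violates Lemma \ref{t}, forcing $p=0$.'' But Lemma \ref{t} is not a constraint on $\x$ that can be violated; it is an implication whose conclusion is $\rad(\x)=\infty$, and Theorem \ref{t4} carries no finite-radius hypothesis at all. So Lemma \ref{t} gives you nothing here, and the contradiction you want never materializes. (Notice you yourself record this confusion at the end, saying the theorem is ``a short corollary of Lemmas \ref{l3}, \ref{l4} and \ref{t}''; Lemma \ref{t} plays no role.)

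The actual input you need comes straight from $\x=\Omega\x$: iterating gives $\x=\Omega^n\x$ for every $n\ge1$, so any $M\in\x$ is stably isomorphic to $\Omega^n N$ for some $N\in\x$, for every $n$. If moreover $\pd_RM<\infty$, this forces $M$ to be projective. One clean way to see it (reduce to $R$ local by localizing, since projectivity of a finitely generated module over a Noetherian ring is a local condition): from $M\approx\Omega^n N$ one gets $\depth_RM\ge\min\{n,\depth R\}$ by repeated use of the depth lemma, so taking $n\ge\depth R$ gives $\depth_RM=\depth R$, and the Auslander--Buchsbaum formula then yields $\pd_RM=0$. This is the step the paper compresses into ``Therefore $\x\cap\Pd(R)=\add(R)$.'' With this replacement for your use of Lemma \ref{t}, the rest of your proof goes through and matches the paper's.
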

\begin{proof}
By our assumption, for every $R$-module $M\in\x$ and every positive integer number $n$ there exists an $R$-module $N\in\x$ such that $M\cong\Omega^{n}N$. Therefore $\x\cap\Pd(R)=\add(R)$ and so the assertion is clear by Lemmas  \ref{l3} and \ref{l4}.	
\end{proof}	
Let $R$ be a Golod local ring which is not a hypersurface (e.g. a Cohen-Macaulay non-Gorenstein local ring with minimal multiplicity). In \cite[Example 3.5]{AvM}, it is shown that $\add(R)=\g(R)$.
The following can be viewed as a generalization of this result.
\begin{cor}\label{c3}
Let $R$ be a Golod local ring and let $\x$ be a non-trivial resolving subcategory of $\md(R)$ (i.e. $\add(R)\subsetneq\x\subsetneq\md(R)$). Then $\x=\Omega\x$ if and only if $R$ is a hypersurface and $\x\subseteq\g(R)$.	
\end{cor}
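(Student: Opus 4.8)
The plan is to prove the two implications separately, both resting on Theorem~\ref{t4} together with the fact recalled just above: $\g(R)=\add(R)$ whenever $R$ is Golod and not a hypersurface \cite[Example 3.5]{AvM}.

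For the ``only if'' direction I would argue as follows. Assume $\x=\Omega\x$. Then Theorem~\ref{t4} gives $\x\subseteq\g(R)$ at once, so the only thing left to check is that $R$ is a hypersurface. If it were not, then, $R$ being Golod, \cite[Example 3.5]{AvM} would force $\g(R)=\add(R)$, hence $\x\subseteq\add(R)$; since a resolving subcategory contains $R$ and is closed under direct summands, this would give $\x=\add(R)$, contradicting the non-triviality hypothesis $\add(R)\subsetneq\x$. (This is the one place the non-triviality of $\x$ is used.) Thus $R$ is a hypersurface and $\x\subseteq\g(R)$, which is exactly what is claimed.

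For the ``if'' direction, suppose $R$ is a hypersurface and $\x\subseteq\g(R)$; I want $\Omega\x=\x$. As $\x$ is resolving, $\Omega\x\subseteq\x$ automatically, so the content is $\x\subseteq\Omega\x$, and here I would invoke Lemma~\ref{l3}: it suffices to verify that $\x\subseteq\Omega(\md(R))$ and that $\Omega^{-1}\x\subseteq\x$. The first inclusion is immediate, since a totally reflexive module is torsionless, hence embeds in a free module and is therefore a first syzygy module; thus $\x\subseteq\g(R)\subseteq\Omega(\md(R))$. For the second, note that a hypersurface is Gorenstein, so by the Auslander--Bridger formula $\g(R)=\cm(R)$ and every $M\in\x$ is maximal Cohen--Macaulay. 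Over a hypersurface, matrix factorizations show that $\Omega$ induces an autoequivalence of the stable category of maximal Cohen--Macaulay modules with $\Omega^2$ the identity, so $\Omega^{-1}M\approx\Omega M$. Since $\x$ is resolving, $\Omega M\in\x$, and since $\x$ is resolving it is closed under stable isomorphism, so $\Omega^{-1}M\in\x$. Hence $\Omega^{-1}\x\subseteq\x$, and Lemma~\ref{l3} yields $\Omega\x=\x$.

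I do not anticipate a real obstacle; the only care needed is bookkeeping with projective (free) summands. Concretely: one must justify $\Omega^{-1}M\approx\Omega M$ over a hypersurface — this can also be obtained from $\Omega^2M\approx M$ together with $\Omega^{-1}\Omega N\approx N$ for totally reflexive $N$, the latter being \cite[Proposition 2.21]{AB}, already used elsewhere in the paper — and one must be sure that membership in the resolving subcategory $\x$ is insensitive to free summands. It is also worth remarking that the Golod hypothesis is not actually needed for the ``if'' direction, since every hypersurface is Golod.
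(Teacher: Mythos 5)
Your proposal is correct and follows essentially the same route as the paper: Theorem~\ref{t4} together with \cite[Example 3.5]{AvM} handles the ``only if'' direction, and for the converse the paper likewise invokes Eisenbud's matrix-factorization periodicity $\Omega^{-1}M\cong\Omega^2\Omega^{-1}M\cong\Omega M$ over a hypersurface and then applies Lemma~\ref{l3}. The only difference is that you spell out the inclusion $\x\subseteq\g(R)\subseteq\Omega(\md R)$ required by Lemma~\ref{l3}(i), which the paper leaves implicit; this is a minor expository improvement, not a different argument.
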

\begin{proof}
If $\x=\Omega\x$, then by Theorem \ref{t4} $\add(R)\subsetneq\x\subseteq\g(R)$. Hence, by \cite[Example 3.5]{AvM}, $R$ is a hypersurface.
On the other hand, if $R$ is a hypersurface and $\x\subseteq\g(R)$ then $\Omega^{-1}M\cong\Omega^{2}\Omega^{-1}M\cong\Omega M$ for all $M\in\x$ by \cite[Theorem 6.1]{E}. Now the assertion follows from Lemma \ref{l3}.
\end{proof}
Here is an example of a subcategory which is closed under cosyzygies.
\begin{ex}\label{eee}
Let $S$ be any subset of $\Spec R$.
Consider the subcategory
$$
\NF^{-1}(S)=\{M\in\md R\mid\NF(M)\subseteq S\}
$$
of $\md R$.
Then it is easy to see that $\NF^{-1}(S)$ is a resolving subcategory closed under $\Tr$.
Hence it is also closed under $\Omega^{-1}$.
In particular, $\md_0R=\NF^{-1}(\{\fm\})$ is a resolving subcategory of $\md R$ closed under $\Omega^{-1}$.
\end{ex}
\begin{prop}\label{p}
Let $R$ be a local ring and let $\x$ be a resolving subcategory of $\md(R)$ such that $\Omega^{-1}\x\subseteq\x\cap \Omega(\md(R))$. Then either of the following holds.
\begin{enumerate}[(i)]
\item{$\x\subseteq\g(R)$.}
\item{$R$ is a Gorenstein ring of dimension $1$}.	
\end{enumerate}
\end{prop}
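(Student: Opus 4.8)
The plan is to argue by a dichotomy according to whether $\x\subseteq\Omega(\md(R))$. If $\x\subseteq\Omega(\md(R))$, then the hypothesis reads $\Omega^{-1}\x\subseteq\x\subseteq\Omega(\md(R))$, so $\Omega\x=\x$ by Lemma \ref{l3}, and hence $\x\subseteq\g(R)$ by Theorem \ref{t4}; this is conclusion (i). For the rest I assume $\x\not\subseteq\Omega(\md(R))$ and aim for conclusion (ii).

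Fix $M\in\x$ that is not torsionless, and set $t=\ker(M\to M^{**})$; then $t\cong\Ext^1_R(\Tr M,R)\neq0$. The first step is to adapt the computation in the proof of Lemma \ref{l4}: by hypothesis every cosyzygy $\Omega^{-j}M$ with $j\ge1$ is torsionless, and for any torsionless $L$ one has $\Ext^1_R(\Omega^{-1}L,R)=0$ because the $R$-dual of a left $\add R$-approximation of a torsionless module is surjective. Hence the exact sequence $0\to M/t\to Q_0\to Q_1\to\cdots$ with $\coker(d_i)=\Omega^{-i-1}M$ and each $Q_i$ projective stays exact after applying $\Hom_R(-,R)$, producing a projective resolution $\cdots\to Q_1^*\to Q_0^*\to M^*\to0$; dualizing this resolution reproduces the complex $Q_0\to Q_1\to\cdots$, so its zeroth cohomology is simultaneously $M/t$ and $M^{**}$ while its higher cohomology $\Ext^{>0}_R(M^*,R)$ vanishes. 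Thus $M/t\cong M^{**}$ is reflexive and $\Ext^{>0}_R(M^*,R)=0$; moreover $M/t\cong\Omega\Omega^{-1}M$ lies in $\Omega\x\subseteq\x$, so $t$ lies in $\x$ as the kernel of the epimorphism $M\twoheadrightarrow M/t$. The same conclusions hold for every module of $\x$.

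Next I would show $\Ext^{>0}_R(\x,R)\neq0$. If not, then $\Ext^1_R(M/t,R)=0$, and applying $\Hom_R(-,R)$ to $0\to t\to M\to M/t\to0$ yields an embedding $\Hom_R(t,R)\hookrightarrow\Ext^1_R(M/t,R)=0$; since also $\Ext^{>0}_R(t,R)=0$ as $t\in\x$, we get $\Ext^i_R(t,R)=0$ for all $i\ge0$, which forces $t=0$ (localize at a minimal prime $\fp$ of $\Supp t$ and use that the least $i$ with $\Ext^i_{R_\fp}(t_\fp,R_\fp)\neq0$ is finite), a contradiction. So Lemma \ref{l6} supplies $N\in\x\cap\md_0(R)$ with $\Ext^i_R(N,R)\neq0$ for some $i>0$; replacing $N$ by $\Omega^{i-1}N\in\x\cap\md_0(R)$ I may assume $i=1$. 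Picking $0\neq\sigma\in\soc_R\Ext^1_R(N,R)$ gives a non-split extension $0\to R\to L\to N\to0$ with $L\in\x$, and dualizing it (using that $\sigma$ is a socle element) gives an exact sequence $0\to N^*\to L^*\to\fm\to0$ in which the last map has image exactly $\fm$. As $N,L\in\x$ we have $\Ext^{>0}_R(N^*,R)=\Ext^{>0}_R(L^*,R)=0$, so the long exact sequence forces $\Ext^{\ge2}_R(\fm,R)=0$; combined with $0\to\fm\to R\to k\to0$ this gives $\Ext^{\ge3}_R(k,R)=0$, hence $\id_RR<\infty$, hence (Bass's theorem) $R$ is Gorenstein with $\dim R=\id_RR$.

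Finally I would pin down $\dim R=1$. If $\dim R=0$ then $\g(R)=\md(R)\supseteq\x$, so $\x\subseteq\Omega(\md(R))$, contrary to the case assumption. The step I expect to be the main obstacle is excluding $\dim R\ge2$: here one should show that over a Gorenstein ring of dimension $\ge2$ the hypotheses force $\x\subseteq\cm(R)=\g(R)$. Every torsionless $M\in\x$ equals $M/t(M)=\Omega\Omega^{-1}M$, where $\Omega^{-1}M\in\x$ is a first syzygy whose dual has vanishing positive $\Ext$; iterating the hypothesis $\Omega^{-1}\x\subseteq\x\cap\Omega(\md(R))$ shows $\Omega^{-1}M$ is an arbitrarily high syzygy, hence totally reflexive, so torsionless modules of $\x$ are totally reflexive. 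For a general $M\in\x$, $M/t(M)$ is reflexive, hence a second syzygy, hence maximal Cohen--Macaulay; and the filtration $M\supseteq t(M)\supseteq t^2(M)\supseteq\cdots$, whose successive quotients are totally reflexive and whose terms all lie in $\x$, can be shown (by tracking the dimensions of the supports $\Supp(t^jM)$ under localization) to promote this to $M$ itself being maximal Cohen--Macaulay. Then $\x\subseteq\cm(R)=\g(R)\subseteq\Omega(\md(R))$, again contradicting the case assumption. Hence $\dim R=1$, which is conclusion (ii).
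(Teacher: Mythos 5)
Your Case 1 (when $\x\subseteq\Omega(\md R)$, combine Lemma \ref{l3} with Theorem \ref{t4}) is fine, and the bulk of Case 2 is a legitimate alternative to the paper's argument: the dualized cosyzygy resolution giving $M/t(M)\cong M^{**}$, $t(M)\in\x$ and $\Ext^{>0}_R(M^*,R)=0$, the grade argument forcing $\Ext^{>0}_R(\x,R)\neq0$, and the socle-element extension yielding $\Ext^{\geq2}_R(\fm,R)=0$ are all correct. But this only gives $\Ext^{\geq3}_R(k,R)=0$, i.e.\ $R$ Gorenstein with $\id_RR\leq2$, and the whole weight of the proof then falls on excluding $\dim R=2$ --- precisely the step you flag as the obstacle, and the one that does not work as sketched.

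The filtration $M\supseteq t(M)\supseteq t^2(M)\supseteq\cdots$ stabilizes immediately and never reaches $0$. Indeed, once $\depth R>0$ the module $t(M)=\Ext^1_R(\Tr M,R)$ has positive grade, so its annihilator contains an $R$-regular element $r$; then $t(M)^*$ is a torsionless module killed by $r$, hence $t(M)^*=0$, hence $t(M)/t^2(M)\cong t(M)^{**}=0$, i.e.\ $t^2(M)=t(M)\neq0$. So the successive quotients after the first are all zero, no amount of "tracking supports" exhibits $M$ as an iterated extension of maximal Cohen--Macaulay modules, and the contradiction with $\dim R=2$ is not obtained. (Your observation that torsionless modules of $\x$ are infinite syzygies, hence totally reflexive, is correct and gives $\Ext^{\geq2}_R(\x,R)=0$, but that is consistent with $\dim R=2$.) The paper sidesteps this entirely: it proves $\Omega\x=\x$ via pushout diagrams that place in $\x$ a module $X$ with $\pd_RX\leq1$, reduces (by the same socle trick, iterated) to a module $N\in\x$ with $\pd_RN=1$ and $\Ext^1_R(N,R)\cong k$, so that $\Tr k\approx N\in\x$; then $\Tr\Omega^ik\approx\Omega^{-i}\Tr k\in\Omega(\md R)$ forces $\Ext^{i+1}_R(k,R)=0$ for all $i>0$, i.e.\ $\id_RR\leq1$ outright, after which Theorem \ref{t4} applies. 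To repair your proof you would need some such mechanism for killing $\Ext^2_R(k,R)$ (equivalently $\Ext^1_R(\fm,R)$), which your long exact sequence leaves uncontrolled; as written, the dimension-two case is a genuine gap.
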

\begin{proof}
Assume that $R$ is not a Gorenstein ring of dimension $1$. We want to show that $\x\subseteq\g(R)$. By Theorem (\ref{t4}), it is enough to show that $\x\subseteq\Omega\x$. Let $M\in\x$ and consider the following exact sequences
$0\rightarrow\Omega M\rightarrow F\rightarrow M\rightarrow0$ and
$0\rightarrow\Omega M\rightarrow F_1\rightarrow \Omega^{-1}\Omega M\rightarrow0$. We make the following pushout diagram
$$\begin{CD}
&&&&&&0 &&0 \\
&&&&&& @VVV@VVV\\
\ \ &&&& 0@>>>\Omega M @>>>F@>>> M @>>>0&  \\
&&&&&& @VVV @VVV @VV{\parallel}V \\
\ \  &&&&0 @>>>F_1 @>>> T @>>>M @>>>0&\\
&&&&&&@VVV@VVV
\ \ &&&&&&\\
&&&&&&\Omega^{-1}\Omega M &=& \Omega^{-1}\Omega M\\
&&&&&&@VVV@VVV\\
&&&&&&0&&0
\end{CD}$$\\
Note that $\Ext^1_R(\Omega^{-1}\Omega M,R)=0$.
Hence the second column in the above diagram splits, and we get an exact sequence
$$0\longrightarrow F_1\longrightarrow F\oplus\Omega^{-1}\Omega^1M\longrightarrow M\longrightarrow0.$$
As $\Omega^{-1}\Omega M\in\x$, it is a first syzygy module by our assumption and so we have the following exact sequence $0\rightarrow\Omega^{-1}\Omega M\rightarrow F'\rightarrow \Omega^{-2}\Omega M\rightarrow0$, which induces the following exact sequence:
$0\rightarrow F\oplus\Omega^{-1}\Omega M\rightarrow F\oplus F'\rightarrow\Omega^{-2}\Omega M\rightarrow0$.
Thus the following pushout diagram is obtained:
$$\begin{CD}
&&&&&&&&0 &&0 \\
&&&&&&&& @VVV@VVV\\
\ \ &&&& 0@>>>F_1 @>>>F\oplus\Omega^{-1}\Omega M@>>> M @>>>0&  \\
&&&&&& @VV{\parallel}V @VVV @VVV \\
\ \  &&&&0 @>>>F_1 @>>> F\oplus F' @>>> X @>>>0&\\
&&&&&&&&@VVV@VVV\\
&&&&&&&&\Omega^{-2}\Omega M &=& \Omega^{-2}\Omega M\\
&&&&&&&&@VVV@VVV\\
&&&&&&&&0&&0
\end{CD}$$\\
It follows from the above diagram that $X\in\x$ and $\pd_R(X)\leq1$. Hence it is enough to show that $X$ is free. To do this, we prove that $\x\cap\Pd(R)\subseteq\add(R)$.
If $\depth R=0$, then the assertion follows from Auslander-Buchsbaum formula, so we may assume that $\depth R>0$.
Assume contrarily that $\x\cap\Pd(R)\nsubseteq\add(R)$. Hence there exists an $R$-module $N\in\x$ with $\pd_R(N)=1$.
By \cite[Lemma 4.6]{DT}, we may assume that $N$ is locally free.
Therefore $\Ext^1_R(N,R)$ is a nonzero $R$-module of finite length. Let $\sigma\in\soc(\Ext^1_R(N,R))$ be a non-zero element. Hence we have the following non-split exact sequence:
\begin{equation}
\sigma: 0\rightarrow R\rightarrow L\rightarrow N\rightarrow0,
\end{equation}
which induces the following exact sequence: $0\rightarrow k\rightarrow\Ext^1_R(N,R)\rightarrow\Ext^1_R(L,R)\rightarrow0$. Therefore $\ell(\Ext^1_R(L,R))<\ell(\Ext^1_R(N,R))$. Replacing $N$ by
$L$ and repeating this process if $\ell(\Ext^1_R(L,R))>0$, we can assume that
$\Ext^1_R(L,R)=0$. Therefore $\Ext^1_R(N,R)\cong k$. As $\pd_R(N)=1$, $\Tr N\cong k$. Hence $\Tr k\cong N\in\x$. By our assumption, $\Tr\Omega^ik\cong\Omega^{-i}\Tr k\in\x\cap\Omega(\md(R))$ for all $i>0$. Therefore $\Ext^i_R(k,R)=0$ for all $i>1$ which is a contradiction because $R$ is not a Gorenstein ring of dimension one.
\end{proof}	
In the following example, it is shown that over a one dimensional Gorenstein local ring $R$ there exists a resolving subcategory of $\md(R)$ which satisfies in the assumption of Proposition \ref{p}, but not contained in $\g(R)$.
\begin{ex}
Let $(R,\fm,k)$ be a Gorenstein local ring of dimension $1$. Set
$\x=\md_0(R)$. Then the following hold.
\begin{enumerate}[(i)]
\item{$\x$ is resolving.}
\item{$\Omega^{-1}\x$ is contained in $\x\cap\Omega(\md(R))$.}
\item{$\x$ is not contained in $\g(R).$}
\end{enumerate}
\end{ex}
\begin{proof}
Part (i) is stated in Example \ref{eee}. For all $R$-modules $M$,
$\Omega\Tr M\in\g(R)$, because $R$ is Gorenstein of dimension $1$. Hence, by Lemma \ref{le} and Proposition \ref{G}(i), $\Omega^{-1}M=\Tr\Omega\Tr M\in\g(R)$ and so (ii) is clear.
Also $k$ is an element of $\x$ which is not contained in $\g(R)$.
\end{proof}
Now we can prove the main result of this section.
\begin{thm}\label{t5}
	Let $(R,\fm,k)$ be a local ring and let $\x$ be a resolving subcategory of $\md(R)$. If $\Omega^{-n}\x\subseteq\x$ for some $1\leq n\leq\depth R+1$, then either $\x\subseteq\g(R)$ or $\Omega^{n-1}k\in\x$ holds.
In particular, if $\Omega^{-1}\x\subseteq\x$, then either $\x\subseteq\g(R)$ or $k\in\x$ holds.
\end{thm}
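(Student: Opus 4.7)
The plan is to prove the contrapositive: assuming $\x\not\subseteq\g(R)$ and $\Omega^{-n}\x\subseteq\x$ with $1\le n\le\depth R+1$, I will produce $\Omega^{n-1}k\in\x$. My first move is to propagate the cosyzygy-closure downward and establish that every module in $\x$ is an $n$-th syzygy. Concretely, for $M\in\x$, the iterated short exact sequences $0\to\Omega^{-(i-1)}M/K_i\to Q_{i-1}\to\Omega^{-i}M\to0$ arising from the defining left $\add R$-approximations (with $K_i=\ker\lambda_{\Omega^{-(i-1)}M}$, tracked via the stable identification $\Omega^{-1}\approx\Tr\Omega\Tr$ of Lemma \ref{le}), together with closure of $\x$ under kernels of epimorphisms onto the free $Q_{i-1}$, let me descend from $\Omega^{-n}M\in\x$ to $\Omega^{-i}M\in\x$ for every $0\le i\le n$. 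Splicing these sequences also places each $M\in\x$ in $\Omega^n(\md R)\subseteq\Omega(\md R)$.

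Next I apply Lemma \ref{l4} to reduce to producing a module of finite positive projective dimension in $\x$. The preceding step verifies conditions (i) and (ii) of that lemma; if (iii) $\x\cap\Pd(R)=\add R$ also held, Lemma \ref{l4} would force $\x\subseteq\g(R)$, contradicting the hypothesis. So there exists $N\in\x$ with $0<\pd_RN<\infty$. The Auslander--Buchsbaum formula, combined with $\depth_RN\ge\min(n,\depth R)$ from the $n$-th syzygy property, gives $\pd_RN\le\depth R-n$; this rules out $n\ge\depth R$ (those cases are then handled by Lemma \ref{l4}), leaving only $n\le\depth R-1$ to treat.

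The final step extracts $\Omega^{n-1}k$ from $N$. By passing to a syzygy and invoking \cite[Lemma 4.6]{DT}, I may assume $\pd_RN=1$ and $N$ is locally free on the punctured spectrum. The socle-shrinking procedure from the proof of Proposition \ref{p} (iteratively extend $N$ by $R$ along a socle element of $\Ext_R^1(N,R)$, each extension staying in $\x$ by extension closure) produces $N'\in\x$ with $\pd_RN'=1$ and $\Ext_R^1(N',R)\cong k$, so $N'\approx\Tr k$ stably. The first step then gives $\Omega^{-(n-1)}N'\in\x$, which Lemma \ref{le} identifies stably with $\Tr\Omega^{n-1}k$. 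To pass from $\Tr\Omega^{n-1}k\in\x$ to $\Omega^{n-1}k\in\x$, I combine the exact sequence $0\to\Ext^n(k,R)\to\Tr\Omega^{n-1}k\to\Omega\Tr\Omega^nk\to0$ from Proposition \ref{p1}(ii) (whose $\Ext$-term vanishes since $n<\depth R$) with the Auslander--Bridger duality $(\Tr M)^*\approx\Omega^2M$ of Proposition \ref{p1}(iii), to realize $\Omega^{n-1}k$ inside a short exact sequence whose other entries lie in $\x$.

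The main obstacle is this last move: translating $\Tr\Omega^{n-1}k\in\x$ into $\Omega^{n-1}k\in\x$. It demands a delicate interplay between the Auslander--Bridger duality formulas of Proposition \ref{p1}, the resolving and cosyzygy-closure properties of $\x$ from the first step, and the depth-driven vanishings $\Ext^i(k,R)=0$ for $i<\depth R$, which the hypothesis $n\le\depth R+1$ secures. A subtler but technically routine point in the propagation step is the non-injectivity of minimal left $\add R$-approximations, handled throughout by passing to the stable category via Lemma \ref{le}.
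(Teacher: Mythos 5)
Your Step 1 contains a fatal gap, and it brings down the rest of the argument. You claim that $\Omega^{-n}\x\subseteq\x$ lets you ``descend'' to $\Omega^{-i}\x\subseteq\x$ for all $0\leq i\leq n$ and that splicing places $\x$ inside $\Omega^n(\md R)$. Neither assertion follows. The sequences $0\to\Omega^{-(i-1)}M/K_i\to Q_{i-1}\to\Omega^{-i}M\to0$ only yield, by closure under kernels of epimorphisms, that $\Omega^{-(i-1)}M/K_i\in\x$, which is \emph{not} $\Omega^{-(i-1)}M$ unless the minimal left $\add R$-approximation is injective, i.e.\ unless $\Omega^{-(i-1)}M$ is already torsionless --- precisely what you are trying to prove. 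Example~\ref{exa} in the paper is a direct counterexample to your claim: over a Gorenstein local ring $R$ of dimension $d>1$ with $\depth R=d$, the subcategory $\x=\Omega^n(\md R)$ for $0<n<d$ satisfies $\Omega^{-d}\x\subseteq\x$ (so the theorem's hypothesis holds with the parameter equal to $d\leq\depth R+1$), yet $\Omega^{-m}\x\nsubseteq\x$ for $1\leq m\leq n$, and $\x\nsubseteq\Omega^d(\md R)$ since $\Omega^n k$ has depth $n<d$. Because Lemma~\ref{l4} requires both $\Omega^{-1}\x\subseteq\x$ and $\x\subseteq\Omega(\md R)$, it simply cannot be invoked under the theorem's hypothesis, and your reduction in Step 2 and the socle-shrinking in Step 3 rest on the nonexistent $\Omega^{-1}$-closure.

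The paper's proof avoids any such global descent. Instead it argues in two regimes on $\depth R$. For $\depth R=0$ (so $n=1$), it proves $\x\subseteq\g(R)$ directly when $\Ext^1_R(\x,R)=0$, and otherwise bootstraps to $\Ext^2_R(\x,R)\neq0$ and extracts $k\in\x$ from a socle element. For $\depth R>0$ it establishes three claims using only $\Omega^{-n}$-closure: Claim I ($\Omega^{n-1}k\notin\x\Rightarrow\Ext^{n+1}_R(\x,R)=0$), Claim II ($\Omega^{n-1}k\notin\x\Rightarrow\gd_R(M)\leq n$ for $M\in\x\cap\md_0(R)$), and Claim III ($\Ext^1_R(\x,R)=0\Rightarrow\x\subseteq\g(R)$). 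The only place where the paper ``descends'' cosyzygies is inside the proof of Claim II, for the single module $N=\Omega^nM$ with $M\in\x\cap\md_0(R)$, and there the descent is legitimate because the hypothesis $\depth R\geq n-1$ and Lemma~\ref{t7} are first used to show $N$ is $n$-torsionfree, so the cosyzygy maps $\Omega^{-(i-1)}N\to F_i$ really are injective. This local use of torsionfreeness is the key idea your proposal is missing. Your final step also ends too vaguely --- ``realize $\Omega^{n-1}k$ inside a short exact sequence whose other entries lie in $\x$'' --- whereas the paper's endgame deduces from $\Tr\Omega^nk\in\x$ and Claim II that $\gd_R(k)<\infty$, hence $R$ is Gorenstein with $\depth R\in\{n-1,n\}$, and then exhibits the contradiction $\Omega^{n-1}k\in\x$ explicitly.
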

\begin{proof}
We begin with the case $\depth R=0$; then $n=1$.
First we consider the case $\Ext^1_R(\x,R)=0$. Then
$\Ext^{>0}_R(\x,R)=0$. Let $M\in\x$ and $i\ge0$. By Lemma \ref{le} and Proposition \ref{p1}(ii), there is an exact sequence
\begin{equation}\tag{\ref{t5}.1}
0\rightarrow  E \rightarrow\Omega^{-i}M\rightarrow\Omega\Omega^{-(i+1)} M\rightarrow 0,
\end{equation}
where $E:=\Ext^{i+1}_R(\Tr M,R)$. As $\x$ is closed under cosyzygies, it follows from the exact sequence (\ref{t5}.1) that $E\in\x$. Hence $\Ext^{>0}_R(E,R)=0$. By \cite[Theorem 2.8]{AB}, there is an exact sequence
\begin{equation}\tag{\ref{t5}.2}
\Tor_{i+1}^R(\Tr M,R) \rightarrow E^*\rightarrow\Ext^2_R(\Omega^{-(i+1)} M,R).
\end{equation}
As $\Omega^{-(i+1)} M\in\x$, $\Ext^2_R(\Omega^{-(i+1)} M,R)=0$. It follows from the exact sequence (\ref{t5}.2) that $E^*=0$.
Hence $E$ has infinite grade, and $E=0$. Thus $\Ext^{>0}_R(\Tr\x,R)=0$, and $\x$ is contained in $\g(R)$.

Next, let us consider the case $\Ext^1_R(\x,R)\ne0$. We claim that
$\Ext^2_R(\x,R)\ne0$. Indeed, suppose $\Ext^2_R(\x,R)=0$. Since $\Ext^1_R(\x,R)\ne0$, we find a module $M\in\x$ such that $\Ext^1_R(M,R)\ne0$. By lemma \ref{l6}, we may assume $M$ is locally free on the punctured spectrum. Setting $M_0=M$ and
$M_{i+1}=\Ext^1_R(\Tr M_i,R)$ for $i\ge0$, we have an exact sequence
\begin{equation}\tag{\ref{t5}.3}
0\rightarrow M_{i+1}\rightarrow M_i\rightarrow\Omega\Omega^{-1}M_i\rightarrow 0.
\end{equation}
Inductively, the second and third modules are in $\x$, and so is the
first. We get a descending chain
$\cdots\subseteq M_3\subseteq M_2\subseteq M_1$
of modules in $\x$. Since all of these are of finite length, for some $n>0$ we have $M_n=M_{n+1}$. This means that $\Omega\Omega^{-1}M_n=0$, and
$M_n=R^{\oplus e}$ for some $e\ge0$ since $\depth(R)=0$. The exact sequence (\ref{t5}.3) induces the following exact sequence:
$$\Ext^2_R(\Omega^{-1}M_i,R)\rightarrow\Ext^1_R(M_i,R)\rightarrow \Ext^1_R(M_{i+1},R),$$
and the first Ext module vanishes since $\Ext^2_R(\x,R)=0$. Since
$\Ext^1_R(M_0,R)\ne0$, we inductively see that $\Ext^1_R(M_n,R)\ne0$ which is a contradiction, because $M_n$ is a free $R$-module.

Thus we find a module $M\in\x$ with $\Ext^2_R(M,R)\ne0$. Again, by Lemma \ref{l6}, we may assume
$M$ is locally free on the punctured spectrum. Hence $\Ext^2_R(M,R)$ has
finite length, and it contains a nonzero socle element, which is
regarded as an exact sequence
$0\rightarrow R\rightarrow Y\rightarrow\Omega M\rightarrow 0.$
Since $M\in\x$, $Y$ is also in $\x$. Dualizing this, we get an exact sequence
\begin{equation}\tag{\ref{t5}.4}
0\rightarrow k\rightarrow\Tr\Omega M\rightarrow\Tr Y\rightarrow 0.
\end{equation}
By rotating the exact sequence (\ref{t5}.4), we get the following exact sequence
$$0 \rightarrow\Omega\Tr\Omega M\rightarrow\Omega\Tr Y\rightarrow k\rightarrow 0.$$
Dualizing the above exact sequence, we have an exact sequence
$$0\rightarrow k^{\oplus r}\rightarrow(\Omega\Tr Y)^*\xrightarrow{a} (\Omega\Tr\Omega X)^*\rightarrow\Ext^1_R(k,R)\xrightarrow{b}\Ext^2_R(\Tr Y,R),$$
where $r=r(R)$, the type of $R$. The exact sequence (\ref{t5}.4) induces an exact sequence
$$0=\Ext^1_R(\Tr\Omega M,R)\rightarrow\Ext^1_R(k,R)\xrightarrow{b} \Ext^2_R(\Tr Y,R),$$
whence the map $b$ is injective. Therefore the map $a$ is surjective, and
we obtain an exact sequence
$$0\rightarrow k^{\oplus r}\rightarrow(\Omega\Tr Y)^*\xrightarrow{a} (\Omega\Tr\Omega M)^*\rightarrow0.$$
Since the second and third modules are in $\x$, so is the first, and so is $k$.
Consequently, the proof of the theorem is completed in the case where $\depth R=0$.

Now we assume that $\depth R>0$.
First we claim the following:\\
\textbf{Claim I}. If $\Omega^{n-1}k\notin\x$, then $\Ext^{n+1}_R(\x,R)=0$.\\
Proof of Claim. Assume contrarily that  $\Ext^{n+1}_R(\x,R)\neq0$.
Then, by Lemma \ref{l6}, there exists an $R$-module $M\in\x\cap\md_0(R)$ such that $\Ext^{n+1}_R(M,R)\neq0$.
Hence $\Ext^{n+1}_R(M,R)$ has finite length, and it contains a nonzero socle element, which is
regarded as an exact sequence
$0\rightarrow R\rightarrow N\rightarrow\Omega^n M\rightarrow 0.$
Since $M\in\x$, $N$ is also in $\x$. Dualizing this, we get an exact sequence
\begin{equation}\tag{\ref{t5}.5}
0\rightarrow k\rightarrow\Tr\Omega^n M\rightarrow\Tr N\rightarrow 0.
\end{equation}
By rotating the exact sequence (\ref{t5}.5), we get the following exact sequence
$$0 \rightarrow\Omega\Tr\Omega^n M\rightarrow\Omega\Tr N\rightarrow k\rightarrow 0.$$
The above exact sequence induces the following exact sequence:
\begin{equation}\tag{\ref{t5}.6}
0 \rightarrow\Omega^n\Tr\Omega^n M\rightarrow\Omega^n\Tr N\rightarrow \Omega^{n-1}k\rightarrow 0.
\end{equation}
The exact sequence (\ref{t5}.6) induces the following exact sequence.
\begin{equation}\tag{\ref{t5}.7}
(\Omega^n\Tr N)^*\xrightarrow{a}(\Omega^n\Tr\Omega^n M)^*\rightarrow\Tr\Omega^{n-1}k\xrightarrow{b}\Omega^{-n}N\rightarrow\Omega^{-n}\Omega^n M\rightarrow 0.
\end{equation}
Dualizing the exact sequence (\ref{t5}.6), we have an exact sequence
\begin{equation}\tag{\ref{t5}.8}
(\Omega^n\Tr N)^*\xrightarrow{a} (\Omega^n\Tr\Omega^n X)^*\rightarrow\Ext^n_R(k,R)\xrightarrow{c}\Ext^{n+1}_R(\Tr N,R).
\end{equation}
The exact sequence (\ref{t5}.5) induces an exact sequence
\begin{equation}\tag{\ref{t5}.9}
\Ext^{n}_R(\Tr\Omega^n M,R)\rightarrow\Ext^n_R(k,R)\xrightarrow{c} \Ext^{n+1}_R(\Tr N,R).
\end{equation}
As $M$ is locally free on the punctured spectrum and $\depth R\geq n-1$, $\Omega^n M$ is $n$-torsion free by Lemma \ref{t7}.
Hence, $\Ext^n_R(\Tr\Omega^n M,R)=0$ and so the map $c$ is injective by the exact sequence (\ref{t5}.9). Therefore the map $a$ is surjective by the exact sequence (\ref{t5}.8), and so the map $b$ is injective by the exact sequence (\ref{t5}.7). Hence, we get the following exact sequence
\begin{equation}\tag{\ref{t5}.10}
0\rightarrow\Tr\Omega^{n-1}k\rightarrow\Omega^{-n}N\rightarrow\Omega^{-n}\Omega^n M\rightarrow 0.
\end{equation}
It follows from the exact sequence (\ref{t5}.10) that $\Tr\Omega^{n-1}k\in\x$. Set $t=\depth R\geq n-1$ and consider the following exact sequence
\begin{equation}\tag{\ref{t5}.11}
0\rightarrow\Ext^{t}_R(k,R)\rightarrow\Tr\Omega^{t-1}k\rightarrow\Omega\Tr\Omega^{t}k\rightarrow0.
\end{equation}
If $t=n-1$, then $\Ext^i_R(k,R)=0$ for all $i<n-1$ and it is easy to see that $\pd_R(\Tr\Omega^{n-2}k)\leq n-1$. Hence
the exact sequence (\ref{t5}.11) induces the following exact sequence
\begin{equation}\tag{\ref{t5}.12}
0\rightarrow\Omega^{n-1}\Ext^{n-1}_R(k,R)\rightarrow F\rightarrow\Omega^n\Tr\Omega^{n-1}k\rightarrow 0,
\end{equation}
where $F$ is free. As $\Tr\Omega^{n-1}k\in\x$, it follows from the exact sequence (\ref{t5}.12) that $\Omega^{n-1}k\in\x$ which is a contradiction.
Now let $t\geq n$. As $\Ext^i_R(k,R)=0$ for $i<n$, it is easy to see that $\Tr\Omega^{i-1}k\approx\Omega\Tr\Omega^{i}k$ for $0<i<n$. In particular, $\Tr\Omega^ik\approx\Omega^{n-i-1}\Tr\Omega^{n-1}k\in\x$ for $0\leq i\leq n-1$. As $\x$ is closed under $\Omega^{-n}$ and $\Tr\Omega^{n-1}k\in\x$, it is easy to see that $\Tr\Omega^ik\in\x$ for all $i\geq0$.
It follows from the exact sequence (\ref{t5}.11) that $k\in\x$ which is a contradiction. Thus, the proof of the claim is
completed.

Next we claim the following.\\
\textbf{Claim II}. If $\Omega^{n-1}k\notin\x$, then $\gd_R(M)\leq n$ for all $M\in\x\cap\md_0(R)$.\\
Proof of Claim. Let $M\in\x\cap\md_0(R)$ and set $N=\Omega^nM$. We want to prove that $\gd_R(N)=0$. By claim I, it is enough to show that $\Ext^i_R(\Tr N,R)=0$ for all $i>0$. As $\depth R\geq n-1$,
$\Ext^i_R(\Tr N,R)=0$ for $1\leq i\leq n$ by Lemma \ref{t7}. Therefore we obtain the following exact sequences:
$$\begin{CD}
\ \ &&&& 0@>>> N @>>>F_1@>>> \Omega^{-1}N @>>>0&  \\
\ \  &&&&0@>>>\Omega^{-1}N @>>> F_2 @>>>\Omega^{-2}N @>>>0&\\
&&&&&& \vdots&&\vdots&&\vdots\\
\ \  &&&&0@>>>\Omega^{-(n-1)}N @>>> F_n @>>>\Omega^{-n}N @>>>0&\\
\end{CD}$$\\
As $\Omega^{-n}N\in\x$, it follows from the above exact sequences that
$\Omega^{-i}N\in\x$ for $1\leq i\leq n$. Since $\x$ is closed under $\Omega^{-n}$, it is easy to see that $\Omega^{-i}N\in\x$ for all $i>0$.
Assume contrarily that $\Ext^i_R(\Tr N,R)\neq0$ for some $i>n$ and set
$t=\inf\{i>0\mid\Ext^i_R(\Tr N,R)\neq0\}>n$.
Now consider the following exact sequence:
\begin{equation}\tag{\ref{t5}.13}
0\rightarrow\Ext^t_R(\Tr N,R)\rightarrow\Omega^{-(t-1)}N\rightarrow\Omega\Omega^{-t}N\rightarrow0.
\end{equation}
The exact sequence (\ref{t5}.13) induces the following exact sequence:
\begin{equation}\tag{\ref{t5}.14}
\cdots\rightarrow\Ext^j_R(\Omega^{-(t-1)}N,R)\rightarrow\Ext^j_R(\Ext^t_R(\Tr N,R),R)\rightarrow\Ext^{j+2}_R(\Omega^{-t}N,R)\rightarrow\cdots.
\end{equation}
Note that $N$ is $(t-1)$-torsion free.
Hence $N\approx\Omega^{t-1}\Omega^{-(t-1)}N$.
By Claim I, $\Ext^i_R(N,R)=0$ for all $i>0$. Now it is easy to see that $\Ext^i_R(\Omega^{-(t-1)}N,R)=0$ for all $i>0$. As $\Omega^{-t}N\in\x$, by Claim I, $\Ext^i_R(\Omega^{-t}N,R)=0$ for all $i>n$. Therefore, by the exact sequence (\ref{t5}.14), we get
$\Ext^j_R(\Ext^t_R(\Tr N,R),R)=0$ for $j\geq\max\{1, n-1\}$. On the other hand,
$\Ext^t_R(\Tr N,R)$ has finite length and so $\gr_R(\Ext^t_R(\Tr N,R))\geq\depth R\geq\max\{1,n-1\}$. Therefore $\Ext^j_R(\Ext^t_R(\Tr N,R),R)=0$ for all $j\geq0$ and so $\Ext^t_R(\Tr N,R)=0$ which is a contradiction. Thus, the proof of the claim is completed.

\textbf{Claim III}. If $\Ext^1_R(\x,R)=0$, then $\x\subseteq\g(R)$.\\
Proof of Claim. Assume contrarily that there exists an $R$-module $M\in\x$ which is not totally reflexive. Hence $\Ext^m_R(\Tr M,R)\neq0$ for some $m>0$. Consider the following exact sequence
\begin{equation}\tag{\ref{t5}.15}
0\rightarrow\Ext^m_R(\Tr M,R)\rightarrow\Omega^{-(m-1)}M\rightarrow\Omega\Omega^{-m}M\rightarrow0.
\end{equation}
There exists a positive integer $r$ such that $(r-1)n< m\leq rn$. As $\Ext^i_R(M,R)=0$ for all $i>0$, it is easy to see that $$\Omega^{-m}M\approx\Omega^{-(m+1)}\Omega M\approx\cdots\approx\Omega^{-(rn)}\Omega^{rn-m}M\in\x.$$
Similarly, one can see that $\Omega^{-(m-1)}M\in\x$. It follows from the exact sequence (\ref{t5}.15) that $\Ext^m_R(\Tr M,R)\in\x$ and so $\Ext^i_R(\Ext^m_R(\Tr M,R),R)=0$ for all $i>0$. By \cite[Theorem 2.8]{AB}, we have the following exact sequence
$$0=\Tor_m^R(\Tr M,R)\rightarrow(\Ext^m_R(\Tr M,R))^*\rightarrow\Ext^2_R(\Omega^{-m}M,R).$$
As $\Omega^{-m}M\in\x$, $\Ext^2_R(\Omega^{-m}M,R)=0$ and so $(\Ext^m_R(\Tr M,R))^*=0$. Therefore $\Ext^m_R(\Tr M,R)=0$ which is a contradiction and the proof of the claim is completed.

Now Assume contrarily that $\x\nsubseteq\g(R)$ and $\Omega^{n-1}k\notin\x$. By Claim III and Lemma \ref{l6}, $\Ext^1_R(M,R)\neq0$ for some $M\in\x\cap\md_0(R)$. It follows from the Claim II that $\gd_R(M)<\infty$. So we may assume that
$\gd_R(M)=1$. Now $\Ext^1_R(M,R)$ is a nonzero $R$-module of finite length, and we can choose a socle
element $0\neq\sigma\in\Ext^1_R(M,R)$. It can be represented as a short exact sequence:
$$\sigma: 0\rightarrow R\rightarrow M_1\rightarrow M\rightarrow0.$$
The $M_1$ belongs to $\x$, is locally free on the punctured spectrum of $R$ and has Gorenstein dimension at most $1$. The above exact sequence induces the following exact sequence
$$0\rightarrow k\rightarrow\Ext^1_R(M,R)\rightarrow\Ext^1_R(M_1,R)\rightarrow0.$$
This implies that $\ell(\Ext^1_R(M_1,R))=\ell(\Ext^1_R(M,R))-1$.
Replacing $M$ by $M_1$ and
repeating this process if $\ell(\Ext^1_R(M_1,R))>0$, we can assume that $\Ext^1_R(M,R)\cong k$. There are exact sequences
$0\rightarrow\Omega M\rightarrow F\rightarrow M\rightarrow 0$ and $0\rightarrow\Omega M\rightarrow F'\rightarrow\Omega^{-1}\Omega M\rightarrow 0$, where the latter is possible as
$\Omega M$ is a totally reflexive module. We make the following pushout diagram.
$$\begin{CD}
&&&&&&0 &&0 \\
&&&&&& @VVV@VVV\\
\ \ &&&& 0@>>>\Omega M @>>>F@>>> M @>>>0&  \\
&&&&&& @VVV @VVV @VV{\parallel}V \\
\ \  &&&&0 @>>>F' @>>> T @>>>M @>>>0&\\
&&&&&&@VVV@VVV
\ \ &&&&&&\\
&&&&&&\Omega^{-1}\Omega M &=& \Omega^{-1}\Omega M\\
&&&&&&@VVV@VVV\\
&&&&&&0&&0
\end{CD}$$\\
Note that $\Ext^1_R(\Omega^{-1}\Omega M,R)=0$.
Hence the second column in the above diagram splits, and we get an exact sequence
$0\longrightarrow F'\longrightarrow F\oplus\Omega^{-1}\Omega^1M\longrightarrow M\longrightarrow0.$
The exact sequence $0\rightarrow\Omega^{-1}\Omega M\rightarrow F''\rightarrow \Omega^{-2}\Omega M\rightarrow0$ induces the following exact sequence:
$0\rightarrow F\oplus\Omega^{-1}\Omega M\rightarrow F\oplus F'\rightarrow\Omega^{-2}\Omega M\rightarrow0$.
Thus the following pushout diagram is obtained:
$$\begin{CD}
&&&&&&&&0 &&0 \\
&&&&&&&& @VVV@VVV\\
\ \ &&&& 0@>>>F' @>>>F\oplus\Omega^{-1}\Omega M@>>> M @>>>0&  \\
&&&&&& @VV{\parallel}V @VVV @VVV \\
\ \  &&&&0 @>>>F' @>>> F\oplus F'' @>>> X @>>>0&\\
&&&&&&&&@VVV@VVV\\
&&&&&&&&\Omega^{-2}\Omega M &=& \Omega^{-2}\Omega M\\
&&&&&&&&@VVV@VVV\\
&&&&&&&&0&&0
\end{CD}$$\\
As $\gd_R(\Omega M)=0$ and $\x$ is closed under $\Omega^{-n}$,
it is easy to see that $\Omega^{-2}\Omega M\in\x$.
The second column shows
that $X\in\x$ and $\Ext^1_R(X,R)\cong\Ext^1_R(M,R)\cong k$.
Hence, by the second row, we conclude that $\pd_R(X)=1$. Therefore
$\Tr X=k$ and so $\Tr k=X\in\x$. In particular, $\Tr\Omega^nk=\Omega^{-n}\Tr k\in\x$. By Claim II, $\gd_R(\Tr\Omega^nk)\leq n$. On the other hand, $\Ext^i_R(\Tr\Omega^nk,R)=0$ for $1\leq i\leq n$ by Lemma \ref{t7} and so $\gd_R(\Tr\Omega^nk)=0$ by Proposition \ref{G}(iii). Hence, by Proposition \ref{G}, $n-1\leq\depth R=\gd_R(k)\leq n$ and $R$ is Gorenstein. Now consider the following exact sequence
$$0\rightarrow\Ext^t_R(k,R)\rightarrow\Tr\Omega^{t-1}k\rightarrow\Omega\Tr\Omega^{t}k\rightarrow0,$$
where $t=\depth R$. As $\Ext^i_R(k,R)=0$ for $i<t$,
$\Omega^{t-1}\Tr\Omega^{t-1}k\cong\Tr k\in\x$.
The above exact sequence induces the following exact sequence:
$$0\rightarrow\Omega^{t-1}\Ext^t_R(k,R)\rightarrow F\oplus\Omega^{t-1}\Tr\Omega^{t-1}k\rightarrow\Omega^t\Tr\Omega^{t}k\rightarrow0,$$
where $F$ is a free $R$-module. If $t=n$, then $\Tr\Omega^{t}k\in\x$. Also if $t=n-1$, then $\Ext^n_R(k,R)=0$ and so $\Tr\Omega^{t}k\approx\Omega\Tr\Omega^{n}k\in\x$. Hence, by the above exact sequence we get that $\Omega^{t-1}k\in\x$ which is a contradiction.
\end{proof}
The following example should be compared with Theorem \ref{t5}.
\begin{ex}\label{e}
\begin{enumerate}[(i)]
\item
By Example \ref{eee}, $\x=\md_0R$ is a resolving subcategory of $\md R$ with $\Omega^{-1}\x\subseteq\x$.
It is clear that $k\in\x$, while $\x\nsubseteq\g(R)$ unless $R$ is Artinian Gorenstein.
\item
Let $A$ be a local ring with residue field $k$, and let $R=A[[x]]/(x^2)$ with $x$ an indeterminate over $A$.
Note that $R/(x)$ is a totally reflexive $R$-module.
Let $\x$ be the thick closure of $R/(x)$, that is, the smallest thick subcategory of $\g(R)$ containing $R/(x)$.
As there is an exact sequence $0\to R/(x)\to R\to R/(x)\to0$, $\x$ contains $R$.
Hence $\x$ is a resolving subcategory of $\md R$ with $\x\subseteq\g(R)$.
Consider the subcategory
$
\y:=\{X\in\x\mid\Tr X\in\x\}.
$
This contains $R/(x)$ as $\Tr(R/(x))=R/(x)$.
Let $0\to L\to M\to N\to0$ be an exact sequence in $\g(R)$.
Then $\Ext_R^1(N,R)=0$, and the induced sequence $0\to\Tr N\to\Tr M\to\Tr L\to0$ is exact by Proposition \ref{p1}(i).
It is easy to see from this that $\y$ is a thick subcategory of $\g(R)$.
Hence $\y=\x$, and $\x$ is closed under $\Tr$.
Therefore we obtain $\Omega^{-1}\x\subseteq\x$.
However, $k\notin\x$ unless $R$ is Artinian Gorenstein.
\end{enumerate}
\end{ex}
Theorem \ref{t5} immediately yields:
\begin{cor}
Let $R$ be a Cohen--Macaulay local ring of dimension $d>0$.
Let $\x\subseteq\cm(R)$ be a resolving subcategory closed under cosyzygies. Then $\x$ is contained in $\g(R)$. In particular, $R$ is Gorenstein if and only if $\cm(R)$ is closed under cosyzygies.
\end{cor}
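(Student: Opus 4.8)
The plan is to deduce everything directly from Theorem \ref{t5}. First I would observe that since $R$ is Cohen--Macaulay of dimension $d>0$, we have $\depth R=d\ge1$, so the hypothesis of Theorem \ref{t5} that $\Omega^{-n}\x\subseteq\x$ for some $1\le n\le\depth R+1$ is satisfied with $n=1$: indeed, ``$\x$ closed under cosyzygies'' means precisely $\Omega^{-1}\x\subseteq\x$. Theorem \ref{t5} (in the ``in particular'' clause, with $\Omega^{0}k=k$) then tells us that either $\x\subseteq\g(R)$ or $k\in\x$.

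Next I would rule out the possibility $k\in\x$. If $k$ were in $\x$, then since $\x\subseteq\cm(R)$ the residue field $k$ would be a maximal Cohen--Macaulay module, forcing $d=\depth R=\depth_R k=0$, which contradicts $d>0$. Hence $\x\subseteq\g(R)$, proving the first assertion.

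For the ``in particular'' statement I would argue both directions. If $R$ is Gorenstein, then $\cm(R)=\g(R)$ by the Auslander--Bridger formula (Proposition \ref{G}(ii)), and $\g(R)$ is closed under cosyzygies: by Lemma \ref{le} one has $\Omega^{-1}M\approx\Tr\Omega\Tr M$, and if $M$ is totally reflexive then so is $\Tr M$ (Proposition \ref{G}(i)), hence so is $\Omega\Tr M$ (as $\g(R)$ is resolving), and finally so is $\Tr\Omega\Tr M$. Conversely, if $\cm(R)$ is closed under cosyzygies, I apply the first part of the corollary with $\x=\cm(R)$ to get $\cm(R)\subseteq\g(R)$; the reverse inclusion $\g(R)\subseteq\cm(R)$ again holds by Proposition \ref{G}(ii), so $\cm(R)=\g(R)$. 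Then for an arbitrary finitely generated $R$-module $M$, the $d$-th syzygy $\Omega^d M$ lies in $\cm(R)=\g(R)$, so $\gd_R(\Omega^d M)=0$ and thus $\gd_R(M)\le d<\infty$; by Proposition \ref{G}(iv) this means $R$ is Gorenstein.

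The argument has essentially no obstacle — it is a formal consequence of Theorem \ref{t5} — so the only thing to be careful about is the bookkeeping of the standard facts invoked (that $k\in\cm(R)$ forces $d=0$, that $\g(R)\subseteq\cm(R)$ for Cohen--Macaulay $R$, and that $\g(R)$ is itself closed under cosyzygies), each of which is immediate from the material recalled in Section 2.
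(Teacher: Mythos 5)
Your proposal is correct and matches what the paper has in mind; the paper presents this corollary as an immediate consequence of Theorem \ref{t5} without writing out a proof, and your derivation (apply the $n=1$ case of Theorem \ref{t5}, rule out $k\in\x$ via $\depth_R k=0<d$, and handle the equivalence with Gorensteinness by combining the first part with Auslander--Bridger and the fact that $\Omega^d M$ is maximal Cohen--Macaulay) is exactly the natural argument being invoked.
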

\begin{cor}
Let $(R,\fm,k)$ be a local ring that is not Artinian Gorenstein. Then $\g(R)$ is the maximum resolving subcategory closed under transposes and not containing $k$.
\end{cor}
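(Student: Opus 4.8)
The plan is to establish two things: that $\g(R)$ is itself a resolving subcategory closed under transposes and not containing $k$, and that it contains every other such subcategory. For the first point, $\g(R)$ is resolving by \cite[3.11]{AB} (recalled in Section~2), and it is closed under transposes by Proposition~\ref{G}(i). To see $k\notin\g(R)$, I would argue by contradiction: if $k$ were totally reflexive, then $\gd_R k=0$, so the Auslander--Bridger formula (Proposition~\ref{G}(ii)) forces $\depth R=\depth_R k=0$, hence $R$ is Artinian; moreover $\gd_R k<\infty$ makes $R$ Gorenstein by Proposition~\ref{G}(iv), so $R$ would be Artinian Gorenstein, contrary to hypothesis.

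For the maximality, let $\x$ be an arbitrary resolving subcategory of $\md R$ that is closed under transposes and does not contain $k$. The key step is to observe that such an $\x$ is automatically closed under cosyzygies. Indeed, given $M\in\x$, we have $\Omega M\in\x$ since $\x$ is resolving, then $\Tr\Omega\Tr M\in\x$ by applying closedness under $\Tr$ twice, and Lemma~\ref{le} gives $\Omega^{-1}M\approx\Tr\Omega\Tr M$; since a resolving subcategory contains all projectives and is closed under finite direct sums and direct summands, it is closed under stable isomorphism, so $\Omega^{-1}M\in\x$. Thus $\Omega^{-1}\x\subseteq\x$.

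Now I would invoke Theorem~\ref{t5} in the case $n=1$ (whose hypothesis $1\le 1\le\depth R+1$ always holds): it yields that either $\x\subseteq\g(R)$ or $k\in\x$. Since $k\notin\x$ by assumption, we conclude $\x\subseteq\g(R)$, which is precisely the maximality statement. The argument is short, and the only real content is isolating the implication ``closed under $\Tr$ $\Rightarrow$ closed under $\Omega^{-1}$'' so as to apply Theorem~\ref{t5}; I do not expect a genuine obstacle beyond the routine stable-isomorphism bookkeeping coming from Lemma~\ref{le}.
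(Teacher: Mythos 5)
Your proposal is correct and is essentially the intended deduction: the paper states this corollary without proof immediately after Theorem~\ref{t5}, and it follows exactly as you describe, once one observes via Lemma~\ref{le} that closedness under $\Tr$ (together with closedness under $\Omega$ and stable isomorphism, both automatic for a resolving subcategory) forces closedness under $\Omega^{-1}$; the paper itself records precisely this implication inside Example~\ref{eee}. Applying Theorem~\ref{t5} with $n=1$ then gives the dichotomy, and $k\notin\x$ forces $\x\subseteq\g(R)$.

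One small imprecision in the first half: from $\gd_Rk=0$ and the Auslander--Bridger formula you obtain $\depth R=0$, but that alone does not yet make $R$ Artinian (e.g.\ $k[[x,y]]/(x^2,xy)$ has depth $0$ and dimension $1$). You need Proposition~\ref{G}(iv) first, to conclude that $R$ is Gorenstein, hence Cohen--Macaulay, so that $\dim R=\depth R=0$; only then is $R$ Artinian Gorenstein. You do cite both facts, so the argument is fine in substance, but the word ``hence'' should follow the Gorenstein step rather than precede it.
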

\begin{rmk}
In the above corollary, the condition ``not containing $k$" cannot be removed.
In fact, $\md_0R$ is a resolving subcategory closed under transposes, but it is not contained in $\g(R)$.
\end{rmk}
The following is an immediate consequence of Theorem \ref{t5} and \cite[Theorem 4.4]{DT}
\begin{cor}
Let $R$	be a local ring and let $\x\subsetneq\md(R)$ be a resolving subcategory. Assume that $\x$ is closed under transpose. If $\rad(\x)<\infty$, then $\x\subseteq\g(R)$.
\end{cor}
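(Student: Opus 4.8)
The plan is to combine Theorem \ref{t5} with Proposition \ref{t2} (equivalently \cite[Theorem 4.4]{DT}). The first thing to do is to observe that a resolving subcategory closed under transposes is automatically closed under cosyzygies. Indeed, by Lemma \ref{le} one has $\Omega^{-1}M\approx\Tr\Omega\Tr M$ for every $M\in\md R$; so if $M\in\x$, then $\Tr M\in\x$ (closedness under transpose), hence $\Omega\Tr M\in\x$ ($\x$ is resolving, so closed under syzygies), hence $\Tr\Omega\Tr M\in\x$, and finally $\Omega^{-1}M\in\x$ since a resolving subcategory is closed under projective equivalence. Thus $\Omega^{-1}\x\subseteq\x$.

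Next I would invoke Theorem \ref{t5} with $n=1$ (the inequality $1\le n\le\depth R+1$ is automatic for $n=1$). Its conclusion is a dichotomy: either $\x\subseteq\g(R)$, in which case there is nothing left to prove, or $k\in\x$.

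It remains to exclude the case $k\in\x$. Here the finiteness of the radius enters: by Proposition \ref{t2} applied with $t=0$, the containment $\Omega^0k=k\in\x$ together with $\rad(\x)<\infty$ forces $\dim R\le0$, so $R$ is Artinian. But over an Artinian ring every finitely generated module has a finite filtration whose subquotients are all isomorphic to $k$, so an easy induction on length, using that $\x$ is closed under extensions and contains $k$, shows $\md R\subseteq\x$, i.e. $\x=\md R$. This contradicts the hypothesis $\x\subsetneq\md R$. Hence $\x\subseteq\g(R)$.

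I expect no serious obstacle: all the substantial work is already packaged inside Theorem \ref{t5} and Proposition \ref{t2}, and the only genuinely new points are the elementary passage ``closed under transpose $\Rightarrow$ closed under $\Omega^{-1}$'' via Lemma \ref{le} and the observation that a \emph{proper} resolving subcategory over an Artinian local ring cannot contain $k$. The hypothesis $\x\subsetneq\md R$ is truly needed, since over a non-Gorenstein Artinian local ring $\md_0R=\md R$ is resolving and closed under transposes but is not contained in $\g(R)$; one should remark on this so that the role of properness is transparent.
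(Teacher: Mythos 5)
Your proof is correct and follows the same route the paper intends: the reduction ``closed under $\Tr$ $\Rightarrow$ closed under $\Omega^{-1}$'' via Lemma \ref{le}, then Theorem \ref{t5} with $n=1$, then the radius obstruction to $k\in\x$ via Proposition \ref{t2} (equivalently \cite[Theorem 4.4]{DT}). You make explicit the step that the paper leaves implicit, namely that in the residual Artinian case the containment $k\in\x$ forces $\x=\md R$ by induction on length, contradicting $\x\subsetneq\md R$; this is exactly where the properness hypothesis is used, and your counterexample $\md_0 R=\md R$ over a non-Gorenstein Artinian local ring rightly shows it cannot be dropped.
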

\begin{cor}\label{c4}
Let $R$	be a local ring and let $\x\subsetneq\md(R)$ be a resolving subcategory of finite radius. Assume that $\Omega^{-n}\x\subseteq\x$ for some $n$, $1\leq n\leq\depth R+1$.
Then $\x\subseteq\g(R)$.
\end{cor}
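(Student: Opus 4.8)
The plan is to run the argument off Theorem~\ref{t5}, using finiteness of the radius only to discard its second alternative. Applying Theorem~\ref{t5} with the given $n$ (which satisfies $1\le n\le\depth R+1$) shows that either $\x\subseteq\g(R)$, in which case there is nothing left to prove, or $\Omega^{n-1}k\in\x$. So I would assume $\Omega^{n-1}k\in\x$. Since $\rad(\x)<\infty$, Proposition~\ref{t2} forces $\dim R\le n-1$; as $n-1\le\depth R\le\dim R$, all of these coincide, so $R$ is Cohen--Macaulay with $d:=\dim R=\depth R=n-1$.

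I would then dispose of the degenerate case $d=0$: here $n=1$ and $k=\Omega^{n-1}k\in\x$, and since $R$ is Artinian every finitely generated $R$-module has a composition series with factors isomorphic to $k$, so closure of the resolving subcategory $\x$ under extensions forces $\x=\md R$, contradicting $\x\subsetneq\md R$. Hence when $d=0$ the first alternative of Theorem~\ref{t5} must already have held.

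For $d\ge1$: since $\rad(\x)<\infty$ and $\Omega^dk=\Omega^{n-1}k\in\x$, Lemma~\ref{l2} gives $\x\subseteq\cm(R)$. Working now inside $\cm(R)$, I would upgrade ``$\Omega^{-n}\x\subseteq\x$'' to ``$\Omega^{-1}\x\subseteq\x$'': for $M\in\x\subseteq\cm(R)$ the defining sequences $0\to\Omega^{-j}M\to P_j\to\Omega^{-(j+1)}M\to0$ have free middle term, and from $\Omega^{-(d+1)}M=\Omega^{-n}M\in\x$ together with closure of $\x$ under kernels of epimorphisms a downward induction on $j$ gives $\Omega^{-j}M\in\x$ for $0\le j\le d+1$, in particular $\Omega^{-1}M\in\x$. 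Thus $\x$ is a resolving subcategory of $\cm(R)$ closed under cosyzygies, and the preceding corollary (over a Cohen--Macaulay local ring of positive dimension such a subcategory lies in $\g(R)$ --- or, directly, apply Theorem~\ref{t5} once more with $n=1$ and use $k\notin\cm(R)$) yields $\x\subseteq\g(R)$.

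Given Theorem~\ref{t5}, Proposition~\ref{t2} and Lemma~\ref{l2}, the proof is essentially bookkeeping; the one step to be careful with is the passage ``$\Omega^{-n}\x\subseteq\x$ and $\x\subseteq\cm(R)$ $\Rightarrow$ $\Omega^{-1}\x\subseteq\x$'', which genuinely uses that we have first landed inside $\cm(R)$ (so that cosyzygies occur as kernels of epimorphisms between objects of $\x$), and this is where the numerical constraint $n\le\depth R+1$ is consumed. I would resist the temptation to try to show directly that $\Omega^dk$ is totally reflexive: that route looks harder and becomes unnecessary once one routes through $\cm(R)$ and the cosyzygy-closure results.
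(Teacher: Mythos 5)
Your reductions are fine: applying Theorem~\ref{t5} and discarding the first alternative, then using Proposition~\ref{t2} together with $n-1\le\depth R\le\dim R$ to force $R$ Cohen--Macaulay with $\dim R=n-1$, and handling $\dim R=0$ by noting $k\in\x$ plus closure under extensions gives $\x=\md R$, are all correct. Lemma~\ref{l2} then indeed gives $\x\subseteq\cm(R)$ when $d\ge1$. The gap is in the step that is supposed to ``consume'' the hypothesis $\Omega^{-n}\x\subseteq\x$: you claim that for $M\in\cm(R)$ the sequences $0\to\Omega^{-j}M\to P_j\to\Omega^{-(j+1)}M\to0$ are exact, and that downward induction then yields $\Omega^{-1}M\in\x$. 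But the left $\add R$-approximation $\Omega^{-j}M\to P_j$ is injective only when $\Omega^{-j}M$ is torsionless, and a maximal Cohen--Macaulay module over a CM local ring of positive dimension need not be torsionless; for example, over $R=(k[x,y]/(x,y)^2)[[t]]$ the MCM module $M=R/(x)$ has $\bar y$ killed by every element of $M^*=(x,y)R$, so $M\to M^{**}$ is not injective. Without exactness, closure under kernels of epimorphisms only puts the \emph{image} of $\Omega^{-j}M$ in $P_j$ into $\x$, not $\Omega^{-j}M$ itself, and your induction does not go through. So ``$\Omega^{-n}\x\subseteq\x$ and $\x\subseteq\cm(R)\Rightarrow\Omega^{-1}\x\subseteq\x$'' is unjustified as stated.

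The paper avoids this pitfall by not attempting to upgrade $\Omega^{-n}$-closure to $\Omega^{-1}$-closure for all of $\x$. It works only with the specific module $M=\Omega^{n-1}k$ supplied by Theorem~\ref{t5}: since $M$ is locally free on the punctured spectrum, Lemma~\ref{t7} (Ma\c{s}iek) shows $\Omega M=\Omega^nk$ is $n$-torsion-free; this is exactly what makes the relations $\Omega^{-i}\Omega M\cong\Omega\,\Omega^{-(i+1)}\Omega M$ valid for $0\le i\le n-1$, whence $\Omega^{-2}\Omega M\cong\Omega^{n-2}\Omega^{-n}\Omega M\in\x$. It then treats the case $\Ext^1_R(M,R)=0$ (which gives $\Ext^{d+1}_R(k,R)=0$, hence $R$ Gorenstein and $\cm(R)=\g(R)$) and, when $\Ext^1_R(M,R)\ne0$, applies Proposition~\ref{p2} to $M$ to conclude $\rad\x=\infty$, a contradiction. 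If you want to keep your architecture, you would need to justify the torsionless-ness of the relevant cosyzygies along the lines the paper does for $\Omega^{n-1}k$, rather than asserting it for all MCM modules; the cleaner route is to follow the paper and finish with Proposition~\ref{p2}.
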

\begin{proof}
Assume contrarily that $\x\nsubseteq\g(R)$.
By Theorem \ref{t5}, $\Omega^{n-1}k\in\x$.
If $n=1$, then we get a contradiction by \cite[Theorem 4.4]{DT}. So assume that $n>1$.
It follows from Proposition \ref{t2} that
$R$ is Cohen--Macaulay of dimension $n-1$.
By Lemma \ref{l2}, $\x\subseteq\cm(R)$.
Set $M=\Omega^{n-1}k\in\x$. If $\Ext^1_R(M,R)=0$, then $R$ is Gorenstein and so $\x\subseteq\cm(R)=\g(R)$. So let $\Ext^1_R(M,R)\neq0$.
As $M$ is locally free on the punctured spectrum, $\Omega M$ is $n$-torsion free by Lemma \ref{t7}. In particular, $\Omega M$ is reflexive. Also, $\Omega^{-i}\Omega M\cong\Omega\Omega^{-(i+1)}\Omega M$ for $0\leq i\leq n-1$. As $\x$ is closed under $\Omega^{-n}$, it is easy to see that $\Omega^{-2}\Omega M\in\x$. It follows from Proposition \ref{p2} that $\rad(\x)=\infty$, which is a contradiction.
\end{proof}
The following result is an immediate consequence of Theorem \ref{t5}.
\begin{cor}
Let $R$ be a local ring which is not Artinian Gorenstein.
Consider the following subcategory of $\md R$.
$$\x=\{M\in\md(R)\mid\Ext^i_R(M,R)=0 \text{ for all } i>0\}.$$
Then $\x$ is closed under cosyzygies if and only if $\x=\g(R)$.	
\end{cor}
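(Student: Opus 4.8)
The plan is to read the statement off from Theorem \ref{t5}, once we know that the subcategory $\x$ is itself resolving. So the first step is to check the axioms: $\x$ contains the projective modules, and it is closed under direct summands and under extensions because $\Ext_R(-,R)$ is additive and turns short exact sequences into long exact sequences. It is also closed under syzygies: for $M\in\x$, applying $\Ext_R(-,R)$ to $0\to\Omega M\to F\to M\to0$ with $F$ free gives $\Ext^i_R(\Omega M,R)\cong\Ext^{i+1}_R(M,R)=0$ for all $i>0$, so $\Omega M\in\x$. Hence $\x$ is resolving by \cite[Lemma 3.2]{Y}. Also $\g(R)\subseteq\x$ trivially, since a totally reflexive module has vanishing higher self-extensions. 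Thus the whole statement reduces to showing: $\x$ is closed under cosyzygies if and only if $\x\subseteq\g(R)$.

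For the ``if'' direction I would establish the general fact that $\g(R)$ is always closed under cosyzygies; this immediately yields that $\x=\g(R)$ is. Given $M\in\g(R)$: by Proposition \ref{G}(i) we have $\Tr M\in\g(R)$; since $\g(R)$ is resolving it is closed under syzygies, so $\Omega\Tr M\in\g(R)$; applying Proposition \ref{G}(i) once more, $\Tr\Omega\Tr M\in\g(R)$. By Lemma \ref{le}, $\Omega^{-1}M\approx\Tr\Omega\Tr M$, and $\g(R)$ is closed under stable isomorphism since it contains the projectives and is closed under direct sums and direct summands; hence $\Omega^{-1}M\in\g(R)$.

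For the ``only if'' direction, suppose $\Omega^{-1}\x\subseteq\x$. Apply Theorem \ref{t5} with $n=1$ (the inequality $1\le n\le\depth R+1$ always holds): either $\x\subseteq\g(R)$, in which case we are done by the inclusion above, or $k\in\x$. The only real work is to exclude the second possibility. If $k\in\x$, then $\Ext^i_R(k,R)=0$ for all $i>0$. Since $\depth R$ equals the least $i$ with $\Ext^i_R(k,R)\neq0$ and this value is attained, we get $\depth R=0$; and then the vanishing of all positive Bass numbers of $R$ forces $\id_RR=0$, so $R$ is Gorenstein, hence Artinian Gorenstein, contradicting the hypothesis on $R$. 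Therefore $k\notin\x$, so $\x\subseteq\g(R)$, and $\x=\g(R)$.

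I expect this last step --- turning ``$\Ext^i_R(k,R)=0$ for all $i>0$'' into ``$R$ is Artinian Gorenstein'' --- to be the only point needing external input, namely the classical fact that a Noetherian local ring with $\Ext^i_R(k,R)=0$ for all large $i$ is Gorenstein (equivalently, $\id_RR=\sup\{i\mid\Ext^i_R(k,R)\neq0\}$ for finitely generated modules). This is exactly the kind of reasoning already used inside the proof of Theorem \ref{t5}, so no genuinely new ingredient is required; everything else is formal bookkeeping with results established earlier in the paper.
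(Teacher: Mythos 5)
Your proof is correct and follows exactly the route the paper intends: the paper labels this corollary an immediate consequence of Theorem \ref{t5} without writing out a proof, and your verification that $\x$ is resolving, that $\g(R)\subseteq\x$ always, and that $k\in\x$ would force $\depth R=0$ together with $\id_RR=0$ (hence $R$ Artinian Gorenstein, contradicting the hypothesis) supplies precisely the details being elided.
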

\begin{cor}\label{t6}
Let $(R,\fm,k)$ be a local ring and let $\x$ be a resolving subcategory of $\md(R)$.
Assume that $\Omega^{-n}\x\subseteq\x\subseteq\Omega^n(\md(R))$ for some $n$, $1\leq n\leq\depth R+1$, then $\x\subseteq\g(R).$	
\end{cor}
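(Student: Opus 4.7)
The plan is to apply Theorem \ref{t5} to get the dichotomy $\x\subseteq\g(R)$ or $\Omega^{n-1}k\in\x$, and then use the additional hypothesis $\x\subseteq\Omega^n(\md(R))$ to reduce the second alternative to a situation in which Lemma \ref{l4} applies.

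By Theorem \ref{t5}, we may assume $\Omega^{n-1}k\in\x$. Since $\x\subseteq\Omega^n(\md(R))$, the module $\Omega^{n-1}k$ is an $n$-th syzygy, so extending the witnessing sequence to a projective resolution produces an $R$-module $N$ with $\Omega^{n-1}k\approx\Omega^n N$. Iterating Schanuel's lemma, and using at each stage the short exact sequences from the minimal projective resolutions of $k$ and $N$, we peel off one syzygy at a time to obtain $\Omega^i k\approx\Omega^{i+1}N$ for $i=n-2,n-3,\dots,0$. In particular $k\approx\Omega N$; since $k$ is nonprojective, this produces an embedding of $k$ into a free $R$-module, so $\Hom_R(k,R)\neq 0$ and hence $\depth R=0$.

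Combined with the hypothesis $n\le\depth R+1$, this forces $n=1$, and the assumptions become $\Omega^{-1}\x\subseteq\x\subseteq\Omega(\md(R))$ with $\x$ resolving over a ring of depth zero. The Auslander--Buchsbaum formula then shows that any module of finite projective dimension must be projective, so $\x\cap\Pd(R)=\add(R)$ automatically. All three hypotheses of Lemma \ref{l4} are thus met, and that lemma yields $\x\subseteq\g(R)$, completing the argument.

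The most delicate step will be the iterated Schanuel comparison: projective summands must be tracked carefully using the $\approx$-equivalence, and one must stay on the syzygy side of the theory throughout, since the formula $\Omega^{-1}\Omega M\approx M$ is not available for arbitrary $M$.
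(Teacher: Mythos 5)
The ``iterated Schanuel'' step at the heart of your argument is a co-Schanuel claim, and it is false over a general local ring. From $\Omega^{n-1}k\approx\Omega^nN$ you want to deduce $\Omega^{n-2}k\approx\Omega^{n-1}N$, i.e.\ from the two exact sequences $0\to\Omega^{n-1}k\to P\to\Omega^{n-2}k\to0$ and $0\to\Omega^nN\to Q\to\Omega^{n-1}N\to0$ with stably isomorphic kernels you want to compare the cokernels. Schanuel's lemma only travels the other way: equal cokernels give stably isomorphic kernels. The syzygy functor is simply not injective on stable isomorphism classes; for instance over $R=k[x,y]/(x^2,xy,y^2)$ the modules $A=R/(x)$ and $B=R/(x+y)$ satisfy $\Omega A\cong k\cong\Omega B$ but $A\not\cong B$ (and neither has a free summand, so $A\not\approx B$). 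Your aside about ``tracking projective summands'' and ``staying on the syzygy side'' does not address this; the difficulty is not bookkeeping but that no version of the implication holds.

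This gap is also not cosmetic, because the conclusion you are steering toward is wrong in general: the hypotheses here do not force $\depth R=0$, and hence do not force $n=1$. What the hypotheses actually give, once one exploits that $\Omega^{n-1}k$ is $n$-torsion free (Lemma~\ref{t7}, using $\depth R\geq n-1$ and that $\Omega^{n-1}k$ is locally free on the punctured spectrum), is control of the cosyzygies $\Omega^{-i}\Omega^{n-1}k$ and the $\Ext$-modules $\Ext^j_R(\Tr\Omega^{n-1}k,R)$; running that out shows $\Omega^{n-1}k$ is an $i$-th syzygy for every $i$, which pins down $\depth R=n-1$ and eventually that $R$ is Gorenstein of dimension $n-1$, whence $\x\subseteq\Omega^n(\md R)\subseteq\g(R)$. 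In short, the correct ``inverse'' to the syzygy functor here is the cosyzygy functor applied to a module known to be $n$-torsion free (via Auslander--Bridger's Proposition 2.21), not a bare appeal to Schanuel; your proof would need to be rebuilt around that machinery.
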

\begin{proof}
If $n=1$, then the assertion follows from Theorem \ref{t4} and Lemma \ref{l3}. So assume that $n>1$.	
It follows from Theorem \ref{t5} that either $\Omega^{n-1}k\in\x$ or $\x\subseteq\g(R)$. Assume that $\Omega^{n-1}k\in\x$. Hence $\Omega^{n-1}k$
is an $n$-th syzygy module. As $\depth R\geq n-1$, by Lemma \ref{t7}, $\Omega^{n-1}k$ is $n$-torsion free and so
$\Omega^{-i}\Omega^{n-1}k\cong\Omega\Omega^{-(i+1)}\Omega^{n-1}k$ for $0\leq i\leq n-1$. Since $\x$ is closed under $\Omega^{-n}$, it is easy to see that $\Omega^{-i}\Omega^{n-1}k\in\x$ for all $i\geq0$. Now consider the following exact sequence:
\begin{equation}\tag{\ref{t6}.1}
0\rightarrow\Ext^j_R(\Tr\Omega^{n-1}k,R)\rightarrow\Omega^{-(j-1)}\Omega^{n-1}k\rightarrow\Omega\Omega^{-j}\Omega^{n-1}k\rightarrow0.
\end{equation}
It follows from the exact sequence (\ref{t6}.1) that  $\Ext^j_R(\Tr\Omega^{n-1}k,R)\in\x$ for all $j>0$. Note that $\x\subseteq\Omega^n(\md(R))$ for some $n>0$. As $\ell(\Ext^j_R(\Tr\Omega^{n-1}k,R))<\infty$ for all $j>0$, $\Ext^j_R(\Tr\Omega^{n-1}k,R)=0$ for all $j>0$. In other words, $\Omega^{n-1}k$ is $i$-th syzygy module for all $i\geq0$ and so $\depth R=n-1$. By Lemma \ref{t7}, $\Omega^nk$ is $n$-torsion free. As $n>1$, by \cite[Proposition 2.21]{AB}, there exists an exact sequence $0\rightarrow L\rightarrow\Omega^{-2}\Omega^2(\Omega^{n-2}k)\rightarrow\Omega^{n-2}k\rightarrow0$ with $\pd_R(L)<\infty$. Rotating this gives an exact sequence
\begin{equation}\tag{\ref{t6}.2}
0\rightarrow\Omega^{n-1}k\rightarrow L\rightarrow\Omega^{-2}\Omega^{n}k\rightarrow0.
\end{equation}
As $\Omega^nk$ is $n$-torsion free and $\x$ is closed under $\Omega^{-n}$, it is easy to see that $\Omega^{-i}\Omega^{n}k\in\x$ for all $i\geq0$.
In particular, $\Omega^{-2}\Omega^{n}k\in\x$. It follows from the exact sequence (\ref{t6}.2) that $L\in\x$. As $\depth R=n-1$ and $\x\subseteq\Omega^n(\md(R))$, by Auslander-Buchsbaum formula, we conclude that $L$ is free. The exact sequence (\ref{t6}.2) induces the following exact sequence:
$$\cdots\rightarrow\Ext^1_R(L,R)\rightarrow\Ext^1_R(\Omega^{n-1}k,R)\rightarrow\Ext^2_R(\Omega^{-2}\Omega^{n}k,R)\rightarrow\cdots.$$
Hence $\Ext^n_R(k,R)=0$, and $R$ is Gorenstein of dimension $n-1$.
Thus $\x\subseteq\Omega^n(\md(R))\subseteq\g(R)$.
\end{proof}
An $R$-module $M$ is said to satisfy the property $\widetilde{S}_n$ if $\depth_{R_\fp}(M_\fp)\geq\min\{n, \depth R_\fp\}$ for all $\fp\in\Spec R$. Here is an example of a resolving subcategory which is closed under $\Omega^{-n}$ for some but not all $n>0$.
\begin{ex}\label{exa}
Let $R$ be a Gorenstein local ring of dimension $d>1$. For an integer $n$, $0<n<d$, set
$\x=\{M\in\md(R)\mid M \text{ satisfies } \widetilde{S}_n\}.$	
Then $\x$ is a resolving subcategory of $\md(R)$ and the following statements hold.
\begin{enumerate}[(i)]
		\item{$\g(R)\subsetneq\x$.}
		\item{$\Omega^{-m}\x\nsubseteq\x$ for $1\leq m\leq n$.}
		\item{$\Omega^{-i}\x\subseteq\x$ for all $i\geq d$.}
		\item{$\rad(\x)=\infty$.}
\end{enumerate}
\end{ex}
\begin{proof}
First note that by \cite[Theorem 42]{M1}, $\x=\Omega^n(\md(R))$.
It is easy to see that $\x$ is a resolving subcategory of $\md(R)$. Note that $\Omega^{n}k\in\x\setminus\g(R)$ and so (i)	is clear. For the proof of (ii), assume contrarily that
	$\Omega^{-m}\x\subseteq\x$ for some $m$, $1\leq m\leq n$.
Then by Corollary \ref{t6}, $\x\subseteq\g(R)$ which is a contradiction. Hence, $\Omega^{-m}\x\nsubseteq\x$ for $1\leq m\leq n$.
As for (iii), as $R$ is Gorenstein, $\Omega^i\Tr M\in\g(R)$ for all $M\in\md(R)$ and $i\geq d$. Therefore, by Proposition \ref{G} and part (i), $\Omega^{-i}M\cong\Tr\Omega^{i}\Tr M\in\g(R)\subseteq\x$.
Part (iv) follows from Corollary \ref{c4} and part (iii).
\end{proof}
The following corollary of Theorem \ref{t5}
should be compared with Corollary \ref{c3}.
\begin{cor}
Let $R$ be a Golod local ring of positive dimension and
let $\x$ be a non-trivial resolving subcategory of $\md(R)$ of finite radius. Then $\x$ is closed under cosyzygies if and only if $R$ is a hypersurface.
\end{cor}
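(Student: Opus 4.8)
The plan is to prove the two implications separately, each by assembling results already in place.

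For the forward implication I would assume $\Omega^{-1}\x\subseteq\x$ and first invoke Theorem~\ref{tt}. Since $\dim R>0$ and $\rad(\x)<\infty$, this immediately gives $\x\subseteq\g(R)$; if one prefers to see it through Theorem~\ref{t5} (the case $n=1$), the two alternatives are $k\in\x$ or $\x\subseteq\g(R)$, and the first is ruled out because $\rad(\x)<\infty$ together with $k=\Omega^0k\in\x$ would force $\dim R\le0$ by Proposition~\ref{t2}. Next I would use non-triviality: $\add(R)\subsetneq\x\subseteq\g(R)$ says that $R$ has a non-free totally reflexive module. By \cite[Example 3.5]{AvM}, a Golod local ring with a non-free totally reflexive module is a hypersurface, which finishes this direction (and shows that the non-triviality hypothesis is genuinely needed, since $\x=\add(R)$ is trivially closed under cosyzygies over any ring).

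For the reverse implication I would assume $R$ is a hypersurface. Then $R$ is a complete intersection, so Conjecture~\ref{dtconj} holds for $R$ by \cite{DT}; as $\rad(\x)<\infty$ this gives $\x\subseteq\cm(R)$, and since $R$ is Gorenstein, $\cm(R)=\g(R)$, so $\x\subseteq\g(R)$. Fixing $M\in\x$, its cosyzygy $\Omega^{-1}M$ is totally reflexive, hence maximal Cohen--Macaulay, so Eisenbud's periodicity theorem \cite[Theorem 6.1]{E} gives $\Omega^{-1}M\approx\Omega^2\Omega^{-1}M$; combined with $\Omega^2\Omega^{-1}M=\Omega(\Omega\,\Omega^{-1}M)\approx\Omega M$ this yields $\Omega^{-1}M\approx\Omega M$. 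Since $\x$ is resolving it contains $\Omega M$ and the projectives, and being closed under direct summands it then contains $\Omega^{-1}M$; thus $\Omega^{-1}\x\subseteq\x$.

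Neither step is a serious obstacle, since the machinery of Theorem~\ref{tt}, \cite{DT}, \cite{AvM} and \cite{E} does all the work. The point that requires the most care is the role of the finite-radius hypothesis in the reverse direction: it is exactly what places $\x$ inside $\cm(R)=\g(R)$, and it cannot be dropped, because over a hypersurface of dimension $d>1$ the syzygy subcategories $\Omega^i(\md R)$ with $0<i<d$ from Example~\ref{exa} are non-trivial resolving subcategories that are \emph{not} closed under cosyzygies. One should also record the routine but necessary observation that stable isomorphism transports membership across a resolving subcategory, which is what lets the argument pass from $\Omega M\in\x$ to $\Omega^{-1}M\in\x$.
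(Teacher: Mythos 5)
Your proof is correct and follows essentially the same route as the paper: forward direction via Theorem~\ref{tt} (equivalently Theorem~\ref{t5} plus Proposition~\ref{t2}) combined with \cite[Example 3.5]{AvM}, and reverse direction by placing $\x$ inside $\g(R)$ (you go through Conjecture~\ref{dtconj} for complete intersections, the paper cites \cite[Theorem I]{DT} directly, but these are the same result) and then applying Eisenbud's periodicity to identify $\Omega^{-1}M\approx\Omega M$. The extra remarks about stable isomorphism preserving membership and about Example~\ref{exa} showing the finite-radius hypothesis is needed are accurate and reasonable, though not required.
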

\begin{proof}
Let $\x$ be closed under cosyzygies. As $\x$ is non-trivial,  by Theorem \ref{t5} and \cite[Theorem 4.4]{DT}, $\add(R)\subsetneq\x\subseteq\g(R)$.
It follows from \cite[Example 3.5]{AvM} that $R$ is hypersurface.

Now assume that $R$ is a hypersurface. As $\rad_R(\x)<\infty$,
by \cite[Theorem I]{DT}, $\x\subseteq\g(R)$. Hence, by \cite[Theorem 6.1]{E}, $\Omega^{-1}M\cong\Omega^{2}\Omega^{-1}M\cong\Omega M$ for all $M\in\x$. Thus, $\x$ is closed under cosyzygies.
\end{proof}
The following is an immediate consequence of Corollary \ref{t6} and \cite[Proposition 1.7]{DT}.
\begin{cor}
Let $R$ be a local ring and let $\x$ be a resolving subcategory of $\md(R)$. The following are equivalent:
\begin{enumerate}[(i)]
	\item  $\Omega^{-n}\x\subseteq\x\subseteq\Omega^n\x$ for some $n$, $1\leq n\leq\depth R+1$.
	\item  $\x$ is a thick subcategory of $\g(R).$
\end{enumerate}
\end{cor}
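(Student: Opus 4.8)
The plan is to deduce both implications directly from Corollary \ref{t6} and from \cite[Proposition 1.7]{DT}, which (for a resolving subcategory $\x$ of $\md R$ contained in $\g(R)$) characterizes thickness in $\g(R)$ by closure under cosyzygies, equivalently by the condition $\Omega\x=\x$.

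For the implication (i)$\Rightarrow$(ii) I would argue as follows. Suppose $\Omega^{-n}\x\subseteq\x\subseteq\Omega^n\x$ for some $n$ with $1\le n\le\depth R+1$. Since $\Omega^nX$ is an $n$-th syzygy module for every $X$, we have $\Omega^n\x\subseteq\Omega^n(\md R)$, hence $\Omega^{-n}\x\subseteq\x\subseteq\Omega^n(\md R)$, and Corollary \ref{t6} gives $\x\subseteq\g(R)$. Because $\x$ is resolving it is closed under syzygies, so $\Omega^n\x\subseteq\Omega^{n-1}\x\subseteq\cdots\subseteq\Omega\x\subseteq\x$; combined with $\x\subseteq\Omega^n\x$ this collapses the chain and forces $\x=\Omega\x$. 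Now $\x$ is a resolving subcategory of $\g(R)$ with $\Omega\x=\x$, so \cite[Proposition 1.7]{DT} shows that $\x$ is thick in $\g(R)$.

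For (ii)$\Rightarrow$(i): by hypothesis $\x$ is resolving (hence contains $R$) and, by (ii), a thick subcategory of $\g(R)$, so in particular $\x\subseteq\g(R)$; then \cite[Proposition 1.7]{DT} gives that $\x$ is closed under $\Omega^{-1}$ and satisfies $\Omega\x=\x$. Consequently $\Omega^{-n}\x\subseteq\x$ and $\x=\Omega^n\x$ for every $n\ge1$. Taking $n=1$, which lies in the range $1\le n\le\depth R+1$ since $\depth R\ge0$, we obtain $\Omega^{-n}\x\subseteq\x\subseteq\Omega^n\x$, i.e.\ (i) holds.

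There is essentially no obstacle here beyond bookkeeping: the one point to get right is that the nested chain $\Omega^n\x\subseteq\cdots\subseteq\Omega\x\subseteq\x\subseteq\Omega^n\x$ really does force $\x=\Omega\x$, and that the numerical range $1\le n\le\depth R+1$ imposes no genuine restriction in either direction — it is exactly the range in which Corollary \ref{t6} is available, while $n=1$ always suffices for (ii)$\Rightarrow$(i). Everything else is an immediate citation of Corollary \ref{t6} and \cite[Proposition 1.7]{DT}.
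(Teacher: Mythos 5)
Your proof is correct and follows exactly the route the paper intends (the paper states the result as an ``immediate consequence of Corollary \ref{t6} and \cite[Proposition 1.7]{DT}'' without further detail). The two small points you flag are indeed the only ones to check: the observation $\Omega^n\x\subseteq\Omega^n(\md R)$ puts you in the setting of Corollary \ref{t6}, and the chain $\Omega^n\x\subseteq\cdots\subseteq\Omega\x\subseteq\x\subseteq\Omega^n\x$ (the leftward inclusions from $\x$ being resolving) collapses to $\Omega\x=\x$; with $\x\subseteq\g(R)$ in hand, \cite[Proposition 1.7]{DT} finishes (i)$\Rightarrow$(ii), and the converse is the trivial $n=1$ instance.
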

The following result is similarly proved to Claim I of Theorem \ref{t5}.
\begin{prop}\label{tt1}
	Let $(R,\fm,k)$ be a local ring of depth $t$ and let $\x$ be a resolving subcategory of $\md(R)$.
	If $\Omega^{t+1}\Omega^{-(t+1)}\x\subseteq\x$, then either $\Omega^tk\in\x$ or $\Ext^{t+2}_R(\x,R)=0$.	
\end{prop}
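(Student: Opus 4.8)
The plan is to repeat, almost verbatim, the proof of Claim~I inside Theorem~\ref{t5}, specialized to $n=t+1$, i.e.\ to the top of the admissible range $1\le n\le\depth R+1$. With this choice one has $\Omega^{n-1}k=\Omega^tk$ and $\Ext^{n+1}_R(\x,R)=\Ext^{t+2}_R(\x,R)$, and, crucially, $n-1=t=\depth R$; so one stays throughout in the subcase ``$t=n-1$'' of that argument and the auxiliary subcase ``$t\ge n$'' never arises, which streamlines things. So I would assume $\Omega^tk\notin\x$ and, for a contradiction, $\Ext^{t+2}_R(\x,R)\ne0$. By Lemma~\ref{l6} there is $M\in\x\cap\md_0(R)$ with $\Ext^{t+2}_R(M,R)\ne0$; since $M$ is locally free on the punctured spectrum this module has finite length, and a nonzero socle element of $\Ext^{t+2}_R(M,R)\cong\Ext^1_R(\Omega^{t+1}M,R)$ yields a short exact sequence $0\to R\to N\to\Omega^{t+1}M\to0$ with $N\in\x$ (using that $\x$ is resolving, so $\Omega^{t+1}M\in\x$).

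Next I would run the same chain of operations (dualize, rotate, take $(t+1)$st syzygies, dualize again) as in the passage from~(\ref{t5}.5) to~(\ref{t5}.10). The input making the relevant connecting maps behave is Lemma~\ref{t7}: since $\depth R=t$ and $M$ is locally free on the punctured spectrum, $\XX^{t-1}(R)$ contains no prime $\fp$ with $\gd_{R_\fp}(M_\fp)=\infty$, so $\Omega^{t+1}M$ is $(t+1)$-torsion free; hence $\Ext^t_R(\Tr\Omega^{t+1}M,R)=0$, which forces the injectivity/surjectivity chain of maps exactly as in Claim~I and produces a short exact sequence
$$
0\longrightarrow\Tr\Omega^tk\longrightarrow\Omega^{-(t+1)}N\longrightarrow\Omega^{-(t+1)}\Omega^{t+1}M\longrightarrow0 .
$$

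The single point at which the present hypothesis differs from genuine closure under cosyzygies is the step concluding that a transpose lies in $\x$: in Claim~I one uses $\Omega^{-n}\x\subseteq\x$ directly, whereas here I would apply the syzygy functor $\Omega^{t+1}$ to the displayed sequence. By Proposition~\ref{p1}(i) this yields, up to free summands, a short exact sequence whose middle and right terms are $\Omega^{t+1}\Omega^{-(t+1)}N$ and $\Omega^{t+1}\Omega^{-(t+1)}(\Omega^{t+1}M)$; both lie in $\x$ by the hypothesis $\Omega^{t+1}\Omega^{-(t+1)}\x\subseteq\x$ (applied to $N\in\x$ and to $\Omega^{t+1}M\in\x$), so the kernel $\Omega^{t+1}\Tr\Omega^tk$ lies in $\x$ as well. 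Finally, as in~(\ref{t5}.11)--(\ref{t5}.12) with $n-1=t$, the vanishing $\Ext^i_R(k,R)=0$ for $i<t$ gives $\pd_R(\Tr\Omega^{t-1}k)\le t$ and hence a short exact sequence $0\to\Omega^t\Ext^t_R(k,R)\to F\to\Omega^{t+1}\Tr\Omega^tk\to0$ with $F$ free; since $\x$ is resolving and $\Omega^{t+1}\Tr\Omega^tk\in\x$, this gives $\Omega^t\Ext^t_R(k,R)\in\x$. But $\Ext^t_R(k,R)$ is a nonzero $k$-vector space, so $\Omega^t\Ext^t_R(k,R)$ is, up to free summands, a nonzero finite direct sum of copies of $\Omega^tk$; closure of $\x$ under direct summands then yields $\Omega^tk\in\x$, the desired contradiction. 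The edge case $t=0$ is handled by the $\depth R=0$ part of the proof of Theorem~\ref{t5}, which in fact already uses only $\Omega\Omega^{-1}\x\subseteq\x$.

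I expect the main obstacle to be purely organizational: one must check that at \emph{every} step of Claim~I where a cosyzygy of a module of $\x$ is fed back into $\x$, one may instead first apply $\Omega^{t+1}$ and then invoke that $\x$ is resolving, so that the weaker hypothesis $\Omega^{t+1}\Omega^{-(t+1)}\x\subseteq\x$ genuinely suffices; and one must verify the boundary application of Lemma~\ref{t7} at $\depth R=t=n-1$. Once these are in place, the rest is the bookkeeping of syzygy shifts already carried out in Theorem~\ref{t5}.
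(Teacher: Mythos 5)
Your proposal is correct and is exactly the argument the paper intends when it says Proposition~\ref{tt1} is ``similarly proved to Claim~I of Theorem~\ref{t5}'': you correctly identified that at every step where Claim~I invokes $\Omega^{-n}\x\subseteq\x$, one may instead pre-apply $\Omega^{t+1}$ and use the weaker hypothesis $\Omega^{t+1}\Omega^{-(t+1)}\x\subseteq\x$, since the downstream use in the analogue of (\ref{t5}.12) only requires $\Omega^{t+1}\Tr\Omega^tk\in\x$ rather than $\Tr\Omega^tk\in\x$, and the $\depth R=0$ part of the proof of Theorem~\ref{t5} already operates with only $\Omega\Omega^{-1}\x\subseteq\x$. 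One tiny slip: the vanishing needed from the $(t+1)$-torsion freeness of $\Omega^{t+1}M$ is $\Ext^{t+1}_R(\Tr\Omega^{t+1}M,R)=0$ rather than $\Ext^t_R(\Tr\Omega^{t+1}M,R)=0$, but both are subsumed by $(t+1)$-torsion freeness, so the argument is unaffected.
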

The following is an immediate consequence of
Proposition \ref{tt1} and Lemma \ref{t7}.
\begin{cor}
Let $(R,\fm,k)$	be a local ring of depth $t$ which is locally Gorenstein on $\XX^{t-1}(R)$. Let $\x$ be a resolving subcategory of $\md R$.
If $\x\subseteq\Omega^{t+1}(\md R)$, then either $\Omega^tk\in\x$ or $\Ext^{t+2}_R(\x,R)=0$.
\end{cor}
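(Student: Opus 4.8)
The plan is to derive this corollary directly from Proposition~\ref{tt1}; the only thing that needs checking is that the hypothesis $\Omega^{t+1}\Omega^{-(t+1)}\x\subseteq\x$ of that proposition is satisfied. Since $\x$ is resolving, it contains the projectives and is closed under direct summands and extensions, hence closed under stable isomorphism; so it is enough to show that $\Omega^{t+1}\Omega^{-(t+1)}M\approx M$ for every $M\in\x$.

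Fix $M\in\x$. By the hypothesis $\x\subseteq\Omega^{t+1}(\md R)$, the module $M$ is a $(t+1)$-th syzygy. Because $R$ is locally Gorenstein on $\XX^{t-1}(R)=\XX^{(t+1)-2}(R)$, Proposition~\ref{G}(iv) gives $\gd_{R_\fp}(M_\fp)<\infty$ for every $\fp\in\XX^{(t+1)-2}(R)$, so Lemma~\ref{t7} applied with $n=t+1$ shows that $M$ is $(t+1)$-torsion free, i.e. $\Ext^i_R(\Tr M,R)=0$ for $1\le i\le t+1$. Now I would combine two ingredients. First, iterating Lemma~\ref{le} and using the stable isomorphism $\Tr\Tr X\approx X$ yields $\Omega^{-j}M\approx\Tr\Omega^{j}\Tr M$ for all $j\ge0$. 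Second, applying Proposition~\ref{p1}(ii) to the module $\Tr M$ produces, for each $j$ with $1\le j\le t+1$, an exact sequence $0\to\Ext^{j}_R(\Tr M,R)\to\Tr\Omega^{j-1}\Tr M\to\Omega\,\Tr\Omega^{j}\Tr M\to0$ whose left-hand term vanishes by the preceding paragraph; hence $\Tr\Omega^{j-1}\Tr M\approx\Omega\,\Tr\Omega^{j}\Tr M$. Splicing these stable isomorphisms for $j=1,\dots,t+1$ and using $\Tr\Tr M\approx M$ gives $M\approx\Omega^{t+1}\Tr\Omega^{t+1}\Tr M\approx\Omega^{t+1}\Omega^{-(t+1)}M$, and therefore $\Omega^{t+1}\Omega^{-(t+1)}M\in\x$. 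Thus the hypothesis of Proposition~\ref{tt1} is verified, and the conclusion of the corollary (either $\Omega^{t}k\in\x$ or $\Ext^{t+2}_R(\x,R)=0$) follows immediately.

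The proof is short, and the only delicate point is the chain of stable isomorphisms in the second paragraph: one must keep track of projective (free) summands at each step — which is exactly what the relation $\approx$ is designed to absorb — and make sure the indices in Lemma~\ref{le} and Proposition~\ref{p1}(ii) line up so that the splice telescopes correctly. Beyond this routine bookkeeping I do not expect any real obstacle.
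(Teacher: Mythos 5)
Your argument is correct, and it is essentially the paper's own (the paper states the corollary is "an immediate consequence of Proposition \ref{tt1} and Lemma \ref{t7}", which is exactly what you carry out): you use the locally Gorenstein hypothesis to verify the $\gd_{R_\fp}$ condition in Lemma \ref{t7}, conclude $M$ is $(t+1)$-torsion free from $\x\subseteq\Omega^{t+1}(\md R)$, and then splice the exact sequences from Proposition \ref{p1}(ii) via Lemma \ref{le} to get $M\approx\Omega^{t+1}\Omega^{-(t+1)}M$, so that Proposition \ref{tt1} applies. The bookkeeping with $\approx$ works because $\Omega$ and $\Omega^{-1}$ respect stable isomorphism and $\Tr\Tr X\approx X$; there is no gap.
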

Let $\lambda$ be the linkage functor, that is, $\lambda=\Omega\Tr$.
Proposition \ref{tt1} gives rise to the following.
\begin{cor}
Let $(R,\fm,k)$ be a local ring and $\x$ a resolving subcategory of $\md R$.
The following statements hold:
\begin{enumerate}[(i)]
	\item{Assume $\depth R=0$. If $\lambda^2\x\subseteq\x$ (e.g. $\x\subseteq\Omega(\md R)$), then either $k\in\x$ or $\Ext^2_R(\x,R)=0$.}
	\item{Assume $\depth R=1$. If $\x^{**}\subseteq\x$ (e.g. $\x\subseteq\Omega^2(\md R)$ and $R$ is generically Gorenstein), then either $\fm\in\x$ or $\Ext^3_R(\x,R)=0$.}
\end{enumerate}
\end{cor}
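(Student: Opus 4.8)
The plan is to reduce both statements to Proposition \ref{tt1}: the only work is to recognize the hypotheses in (i) and (ii) as instances of the condition $\Omega^{t+1}\Omega^{-(t+1)}\x\subseteq\x$ of that proposition, with $t=\depth R$, and to identify $\Omega^t k$ with $k$ (when $t=0$) and with $\fm$ (when $t=1$).

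For (i), where $t=\depth R=0$, the key identity is $\Omega^1\Omega^{-1}M\approx\lambda^2 M$ for every $R$-module $M$. Indeed, Lemma \ref{le} gives $\Omega^{-1}M\approx\Tr\Omega\Tr M$, so $\Omega\Omega^{-1}M\approx\Omega\Tr\Omega\Tr M=\lambda(\lambda M)$ because $\lambda=\Omega\Tr$. Since a resolving subcategory is closed under stable isomorphism, $\lambda^2\x\subseteq\x$ forces $\Omega^1\Omega^{-1}\x\subseteq\x$, and Proposition \ref{tt1} with $t=0$ gives $k\in\x$ or $\Ext^2_R(\x,R)=0$. For the parenthetical case, if $M\in\Omega(\md R)$, say $M\approx\Omega N$, then applying the syzygy functor (Proposition \ref{p1}(i)) to the exact sequence $0\to Q\to\Omega^{-1}\Omega N\to N\to0$ of \cite[Proposition 2.21]{AB} (with $Q$ projective) yields $\Omega\Omega^{-1}M\approx\Omega N\approx M$, hence $\lambda^2\x\subseteq\x$.

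For (ii), where $t=\depth R=1$, the analogous identity is $\Omega^2\Omega^{-2}M\approx M^{**}$. Iterating Lemma \ref{le} gives $\Omega^{-2}M\approx\Tr\Omega^2\Tr M$, and the stable isomorphism $M^*\approx\Omega^2\Tr M$ of Proposition \ref{p1}(iii), used twice, rewrites this as $\Omega^{-2}M\approx\Tr(M^*)$ and then $\Omega^2\Omega^{-2}M\approx\Omega^2\Tr(M^*)\approx(M^*)^*=M^{**}$. Thus $\x^{**}\subseteq\x$ forces $\Omega^2\Omega^{-2}\x\subseteq\x$, and Proposition \ref{tt1} with $t=1$ gives $\Omega^1 k\in\x$ or $\Ext^3_R(\x,R)=0$; since $\Omega k=\fm$, the first alternative is $\fm\in\x$. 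For the parenthetical case, if $\x\subseteq\Omega^2(\md R)$ and $R$ is generically Gorenstein, then $\gd_{R_\fp}(M_\fp)<\infty$ for every $M$ and every $\fp\in\XX^0(R)$, so Lemma \ref{t7} (with $n=2$) shows every $M\in\x$ is $2$-torsion free, hence reflexive via the exact sequence $0\to\Ext^1_R(\Tr M,R)\to M\to M^{**}\to\Ext^2_R(\Tr M,R)\to0$; therefore $M^{**}\cong M\in\x$ and $\x^{**}\subseteq\x$.

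Once the two identities $\Omega^1\Omega^{-1}M\approx\lambda^2 M$ and $\Omega^2\Omega^{-2}M\approx M^{**}$ are established, the corollary is a formal consequence of Proposition \ref{tt1}, so I do not anticipate a real obstacle. The steps needing a little care are the bookkeeping with projective summands in those identities (in particular that $\Tr\Tr$ is the identity up to projectives) and the standard implication ``$2$-torsion free $\Rightarrow$ reflexive'' used in the parenthetical of (ii).
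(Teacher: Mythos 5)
Your proof is correct and takes the route the paper intends: the corollary is stated immediately after Proposition \ref{tt1} without its own proof, and the content consists precisely of the two translations you establish, namely $\Omega\Omega^{-1}M\approx\lambda^{2}M$ (via Lemma \ref{le}) and $\Omega^{2}\Omega^{-2}M\approx M^{**}$ (via Lemma \ref{le} iterated, $\Tr\Tr\approx\operatorname{id}$, and Proposition \ref{p1}(iii)), together with $\Omega^{0}k=k$, $\Omega^{1}k=\fm$. The verifications of the parenthetical sufficient conditions — $\Omega(\Omega^{-1}\Omega N)\approx\Omega N$ from \cite[Proposition 2.21]{AB} for (i), and the reduction to $2$-torsion-freeness via Lemma \ref{t7} for (ii) — are also the expected ones.
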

\begin{cor}
	Let $R$ be a Golod local ring which is not a hypersurface.  Let $\x$ be a non-trivial resolving subcategory of $\md(R)$ such that $\Omega^{(t+1)}\Omega^{-(t+1)}\x\subseteq\x$, where $t=\depth R$. If $\rad(\x)<\infty$, then $R$ is Cohen--Macaulay and $\x\subseteq\cm(R)$.
\end{cor}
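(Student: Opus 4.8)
The plan is to feed the hypothesis into Proposition~\ref{tt1} and argue by cases on the dichotomy it produces. Write $t=\depth R$. Since $\Omega^{t+1}\Omega^{-(t+1)}\x\subseteq\x$, Proposition~\ref{tt1} gives that either $\Omega^{t}k\in\x$ or $\Ext^{t+2}_R(\x,R)=0$. The first alternative yields the conclusion directly; the second will be shown incompatible with the remaining hypotheses, so that the first alternative always holds.

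Suppose $\Omega^{t}k\in\x$. Since $\rad(\x)<\infty$, Proposition~\ref{t2} (with this value of $t$) gives $\dim R\le t=\depth R$, and as $\depth R\le\dim R$ always, $R$ is Cohen--Macaulay of dimension $t$. Then $\x$ is a resolving subcategory of finite radius over the Cohen--Macaulay ring $R$ that contains the syzygy $\Omega^{t}k$ of the residue field, so Lemma~\ref{l2} gives $\x\subseteq\cm(R)$. This is exactly the assertion; note that the Golod and non-hypersurface hypotheses played no role in this case.

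Suppose instead that $\Ext^{t+2}_R(\x,R)=0$; I claim this case does not occur. As $\x$ is resolving it is closed under syzygies, and $\Ext^{i}_R(\Omega M,R)\cong\Ext^{i+1}_R(M,R)$ for $i\ge1$, so induction gives $\Ext^{i}_R(M,R)=0$ for all $M\in\x$ and all $i\ge t+2$. If some non-free module of $\x$ had finite projective dimension, Lemma~\ref{t} would force $\rad(\x)=\infty$; hence $\x\cap\Pd(R)=\add(R)$. On the other hand $R$, being Golod but not a hypersurface, is not Gorenstein, and over such a ring a finitely generated module whose cohomology into $R$ vanishes in all large degrees necessarily has finite projective dimension (a structural property of Golod rings, visible through stable cohomology). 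Applied to the modules of $\x$ this forces $\x\subseteq\Pd(R)=\add(R)$, contradicting non-triviality of $\x$. So the case $\Ext^{t+2}_R(\x,R)=0$ is vacuous, the alternative $\Omega^{t}k\in\x$ holds, and the previous paragraph completes the proof.

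The main obstacle is the disposal of the case $\Ext^{t+2}_R(\x,R)=0$, i.e.\ upgrading ``$\Ext^{\gg0}_R(\x,R)=0$'' to ``$\x$ is trivial''; this is precisely where the Golod and non-hypersurface hypotheses are used (a Golod ring is Gorenstein only if it is a hypersurface). If one prefers to stay within the toolkit of this paper, one can instead work with $\x':=\{M\in\x\mid\Ext^{>0}_R(M,R)=0\}$: it is a resolving subcategory containing $\Omega^{t+1}M$ for every $M\in\x$, and $\x'=\add(R)$ would give $\x\subseteq\Pd(R)$, hence $\x=\add(R)$ by Lemma~\ref{t}, so $\x'$ is non-trivial with $\Ext^{>0}_R(\x',R)=0$. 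One would then need $\x'\subseteq\g(R)$ to invoke $\g(R)=\add(R)$ for Golod non-hypersurface rings (\cite[Example~3.5]{AvM}) and reach the same contradiction; establishing that inclusion again relies on the structure of Golod rings, and is the real content.
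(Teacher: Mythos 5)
Your argument is essentially the paper's own. Both proofs feed the hypothesis into Proposition~\ref{tt1} to get the dichotomy $\Omega^tk\in\x$ or $\Ext^{t+2}_R(\x,R)=0$, handle the first branch by Proposition~\ref{t2} (to force $\dim R\le t=\depth R$, hence Cohen--Macaulayness) and Lemma~\ref{l2}, and kill the second branch by combining Lemma~\ref{t} with the fact that over a Golod ring that is not Gorenstein, eventual vanishing of $\Ext^{*}_R(-,R)$ forces finite projective dimension --- the paper cites this as \cite[Proposition~1.4]{JS}, whereas you invoke it as a ``structural property of Golod rings'' without pinning down the reference. That unnamed ingredient is precisely the Jorgensen--\c{S}ega result, and once it is supplied the two proofs coincide; your closing remark sketching an alternative via $\x'=\{M\in\x\mid\Ext^{>0}_R(M,R)=0\}$ is not needed and, as you note yourself, would still lean on the same Golod-structure fact to get $\x'\subseteq\g(R)=\add R$.
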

\begin{proof}
	By Proposition \ref{tt1}, either $\Omega^tk\in\x$ or $\Ext^{t+2}_R(\x,R)=0$. If $\Ext^{t+2}_R(\x,R)=0$, then either $R$ is Gorenstein or $\x=\add(R)$ by \cite[Proposition 1.4]{JS} and Lemma \ref{t}.
	As $R$ is not a hypersurface and $\x$ is non-trivial, we get a contradiction. Therefore, $\Omega^tk\in\x$.
	Now the assertion follows from Proposition \ref{t2} and Lemma \ref{l2}.
\end{proof}

\section*{Acknowledgments}
This work was done when the first author
visited Nagoya University in June and July 2017. He is grateful for the kind hospitality of the NU Department of Mathematics.

\bibliographystyle{amsplain}

\end{document}